\definecolor{plum}{rgb}{0.36078, 0.20784, 0.4}
\definecolor{chameleon}{rgb}{0.30588, 0.60392, 0.023529}
\definecolor{cornflower}{rgb}{0.12549, 0.29020, 0.52941}
\definecolor{scarlet}{rgb}{0.8, 0, 0}
\definecolor{brick}{rgb}{0.64314, 0, 0}
\definecolor{sunrise}{rgb}{0.80784, 0.36078, 0}
\definecolor{lightblue}{rgb}{0.15,0.35,0.75}
\definecolor{carolina}{RGB}{153, 186, 221}
\definecolor{darkblue}{rgb}{0.05,0.25,0.65}
\newcolumntype{L}[1]{>{\raggedright\let\newline\\\arraybackslash\hspace{0pt}}m{#1}}
\newcolumntype{C}[1]{>{\centering\let\newline\\\arraybackslash\hspace{0pt}}m{#1}}
\newcolumntype{R}[1]{>{\raggedleft\let\newline\\\arraybackslash\hspace{0pt}}m{#1}}
\newcommand{\raisemath}[1]{\mathpalette{\raisem@th{#1}}}
\newcommand{\raisem@th}[3]{\raisebox{#1}{$#2#3$}}
\newif\if@sup
\newtoks\@sups
\def\append@sup#1{\edef\act{\noexpand\@sups={\the\@sups #1}}\act}%
\def\reset@sup{\@supfalse\@sups={}}%
\def\mk@scripts#1#2{\if #2/ \if@sup ^{\the\@sups}\fi \else%
  \ifx #1_ \if@sup ^{\the\@sups}\reset@sup \fi {}_{#2}%
  \else \append@sup#2 \@suptrue \fi%
  \expandafter\mk@scripts\fi}
\def\tensor#1#2{\reset@sup#1\mk@scripts#2_/}
\def\multiscripts#1#2#3{\reset@sup{}\mk@scripts#1_/#2%
  \reset@sup\mk@scripts#3_/}
\newbox\slashbox \setbox\slashbox=\hbox{$/$}
\def\itex@pslash#1{\setbox\@tempboxa=\hbox{$#1$}
  \@tempdima=0.5\wd\slashbox \advance\@tempdima 0.5\wd\@tempboxa
  \copy\slashbox \kern-\@tempdima \box\@tempboxa}
\def\slash{\protect\itex@pslash}
\def\clap#1{\hbox to 0pt{\hss#1\hss}}
\def\mathrlap{\mathpalette\mathrlapinternal}
\def\mathclap{\mathpalette\mathclapinternal}
\def\mathrlapinternal#1#2{\rlap{$\mathsurround=0pt#1{#2}$}}
\def\mathclapinternal#1#2{\clap{$\mathsurround=0pt#1{#2}$}}
\let\oldroot\root
\def\root#1#2{\oldroot #1 \of{#2}}
\renewcommand{\sqrt}[2][]{\oldroot #1 \of{#2}}
\DeclareSymbolFont{symbolsC}{U}{txsyc}{m}{n}
\DeclareSymbolFont{stmry}{U}{stmry}{m}{n}
\DeclareFontFamily{OMX}{MnSymbolE}{}
\DeclareSymbolFont{mnomx}{OMX}{MnSymbolE}{m}{n}
\DeclareFontShape{OMX}{MnSymbolE}{m}{n}{
    <-6>  MnSymbolE5
   <6-7>  MnSymbolE6
   <7-8>  MnSymbolE7
   <8-9>  MnSymbolE8
   <9-10> MnSymbolE9
  <10-12> MnSymbolE10
  <12->   MnSymbolE12}{}
\def\Decl@Mn@Delim#1#2#3#4{%
  \if\relax\noexpand#1%
    \let#1\undefined
  \fi
  \DeclareMathDelimiter{#1}{#2}{#3}{#4}{#3}{#4}}
\def\Decl@Mn@Open#1#2#3{\Decl@Mn@Delim{#1}{\mathopen}{#2}{#3}}
\def\Decl@Mn@Close#1#2#3{\Decl@Mn@Delim{#1}{\mathclose}{#2}{#3}}
\Decl@Mn@Open{\llangle}{mnomx}{'164}
\Decl@Mn@Close{\rrangle}{mnomx}{'171}
\Decl@Mn@Open{\lmoustache}{mnomx}{'245}
\Decl@Mn@Close{\rmoustache}{mnomx}{'244}
\DeclareRobustCommand\widecheck[1]{{\mathpalette\@widecheck{#1}}}
\def\@widecheck#1#2{%
    \setbox\z@\hbox{\m@th$#1#2$}%
    \setbox\tw@\hbox{\m@th$#1%
       \widehat{%
          \vrule\@width\z@\@height\ht\z@
          \vrule\@height\z@\@width\wd\z@}$}%
    \dp\tw@-\ht\z@
    \@tempdima\ht\z@ \advance\@tempdima2\ht\tw@ \divide\@tempdima\thr@@
    \setbox\tw@\hbox{%
       \raise\@tempdima\hbox{\scalebox{1}[-1]{\lower\@tempdima\box
\tw@}}}%
    {\ooalign{\box\tw@ \cr \box\z@}}}
\def\udots{\mathinner{\mkern2mu\raise\p@\hbox{.}
\mkern2mu\raise4\p@\hbox{.}\mkern1mu
\raise7\p@\vbox{\kern7\p@\hbox{.}}\mkern1mu}}
\newcommand{\gt}{>}
\renewcommand{\(}{\begin{equation}}
\renewcommand{\)}{\end{equation}}
\newcommand{\bea}{\begin{eqnarray*}}
\newcommand{\eea}{\end{eqnarray*}}
\theoremstyle{italics}
\newtheorem{theorem}{Theorem}[section]
\newtheorem{lemma}[theorem]{Lemma}
\newtheorem{prop}[theorem]{Proposition}
\theoremstyle{definition}
\newtheorem{defn}[theorem]{Definition}
\newtheorem{example}[theorem]{Example}
\newtheorem{remark}[theorem]{Remark}
\newtheorem{note[theorem]}{Note}
\begin{document}

\title{
    Lift of fractional D-brane charge to equivariant Cohomotopy theory
  }

\author{
  Simon Burton\thanks{Department of Physics and Astronomy, University College London, London, WC1E 6BT, UK}, \;
  Hisham Sati\thanks{Division of Science and Mathematics, New York University, Abu Dhabi, UAE}, \;
  Urs Schreiber\thanks{Division of Science and Mathematics, New York University, Abu Dhabi, UAE, on leave from Czech Academy of Science}
}

\maketitle

\begin{abstract}
  The lift of K-theoretic D-brane
  charge to M-theory was recently hypothesized to
  land in Cohomotopy cohomology theory. To further check
  this \emph{Hypothesis H}, here we explicitly compute the constraints
  on fractional D-brane charges at ADE-orientifold singularities
  imposed by the existence of lifts from equivariant K-theory
  to equivariant Cohomotopy theory, through Boardman's
  comparison homomorphism.
  We check the relevant cases and find that
  this condition singles out precisely those fractional D-brane
  charges which do not take irrational values, in any twisted sector.
  Given that the possibility of irrational D-brane charge has been
  perceived as a paradox in string theory, we conclude that
  Hypothesis H serves to resolve this paradox.

  Concretely, we first explain that the Boardman homomorphism,
  in the present case, is the map
  from the Burnside ring to the representation ring
  of the singularity group
  given by forming virtual permutation representations.
  Then we describe an explicit algorithm that computes
  the image of this comparison map for any finite group.
  We run this algorithm for binary Platonic groups, hence for finite subgroups of ${\rm SU}(2)$;
  and we find explicitly that for the
  three exceptional subgroups and
  for the first few cyclic and binary dihedral subgroups
  the comparison morphism surjects
  precisely onto the sub-lattice
  of the real representation ring spanned by the non-irrational characters.
\end{abstract}

\tableofcontents

\vfill

\newpage

We present here a curious computation in elementary representation
theory (Theorem \ref{CokernelOfBetaInVariousExamples} below),
the background for which we introduce in detail in
\cref{TheBoardmanHomomorphism} below.
Besides its mathematical content, which is of interest in itself
as explained in \cref{GeneralFacts} below,
we argue that this result impacts on
open questions in the foundations of string theory, as
 we explain next in \cref{BraneChargeQuantizationInMTheory}.

\section{Fractional brane charge quantization in M-theory}
\label{BraneChargeQuantizationInMTheory}

\noindent {\bf The issue of irrational D-brane charge.}
It is a long-standing conjecture \cite[Sec. 5.1]{Witten98} that the charge lattice of fractional D-branes \cite{DouglasGreeneMorrison97} stuck at $G$-orientifold singularities is the $G$-equivariant
K-theory of the singular point, hence the representation ring
of $G$ (e.g. \cite{Greenlees05}).
However, it was argued already in \cite[4.5.2]{BDHKMMS02} that not all elements of the representation ring
can correspond to viable D-brane charges, and a rationale was sought for identifying a sub-lattice of physical charges.
Independently, in \cite[(2.8)]{BachasDouglasSchweigert00} it was
highlighted that the possibility of irrational D-brane RR-charge
is a ``paradox'' \cite{BachasDouglasSchweigert00} that needs to be resolved.

\medskip
However, for fractional D-branes stuck at $G$-orbifold singularities, the RR-charge is rationally proportional
to the character (recalled as Def. \ref{Character} below)
of the corresponding representation (by \cite[(3.8)]{DouglasGreeneMorrison97}\cite[(4.65)]{BilloCrapsRoose00}\cite[(4.102)]{RecknagelSchomerus13}):
\begin{center}
\hypertarget{Table1}{}
\begin{tabular}{|c||c|c|c|}
  \hline
  \begin{tabular}{c}
    \bf Orbifold
    \\
    \bf D-brane theory
  \end{tabular}
  &
  \small
  \color{blue}
  \begin{tabular}{c}
    D-brane charge
    \\
    at $G$-singularity
  \end{tabular}
  &
  \small
  \color{blue}
  \begin{tabular}{c}
    mass
  \end{tabular}
  &
  \small
  \color{blue}
  \begin{tabular}{c}
    charge in
    \\
    $g$-twisted sector
  \end{tabular}
  \\
  \hline
  \multicolumn{1}{c|}{}
  &
  $V
    \in
    \begin{array}{c}
      \mathrm{KO}_G
      \\
      \begin{rotate}{90}
       $\small\!\simeq$
      \end{rotate}
      \\
      R_{{}_{\mathbb{R}}}(G)
    \end{array}
  $
  &
  $
    \begin{array}{c}
      \mathrm{dim}(V)
      \\
      \begin{rotate}{90}
       $\small\!=$
      \end{rotate}
      \\
      \chi_{{}_{V}}(e)
    \end{array}
  $
  &
  $
      \begin{array}{c}
      \mathrm{tr}_{{}_{V}}(g)
      \\
      \begin{rotate}{90}
       $\small\!=$
      \end{rotate}
      \\
      \chi_{{}_{V}}(g)
    \end{array}
  $
  \\
  \hline
  \begin{tabular}{c}
    \bf Representation
    \\
    \bf theory
  \end{tabular}
  &
  \small
  \color{blue}
  \begin{tabular}{c}
    linear
    \\
    $G$-representation
  \end{tabular}
  &
  \small
  \color{blue}
  \begin{tabular}{c}
    character value
    \\
    at neutral element
  \end{tabular}
  &
  \small
  \color{blue}
  \begin{tabular}{c}
    character value
    \\
    at element $g \in G$
  \end{tabular}
  \\
  \hline
\end{tabular}
\end{center}
\noindent
{\footnotesize \bf Table 1 -- The translation} 
{\footnotesize between fractional D-brane charge 
at $G$-orbifold singularities and characters of linear 
$G$-representations.}

\medskip

In view of \cite{BachasDouglasSchweigert00} this 
means that representations with irrational characters
would reflect physically spurious fractional D-brane charges,
even though they do appear in equivariant K-theory.
Several authors tried to find a resolution of the paradox
of possible irrational D-brane charge
\cite{Taylor00}\cite{Zhou01}\cite{Rajan02}
but the situation has remained inconclusive.

\medskip

\noindent {\bf The open problem of formulating M-theory.}
Of course, perturbative string theory, where this paradox is encountered,
is famously supposed to be just a limiting case of an elusive non-perturbative theory with working title \emph{M-theory}
(see e.g.\cite{Duff99B}\cite[Sec. 2]{HSS18}).
It is to be expected that the full M-theory implies
constraints not seen from the string perturbation series.
This is directly analogous to the now popular statement that
perturbative string theory, in turn, implies constraints not seen
in effective quantum field theory, separating the
``Landscape'' of effective field theories that do lift to
perturbative string vacua from the ``Swampland'' of those
that do not \cite{Vafa05}, indicated on the right of
\hyperlink{Figure1}{Figure 1}.
However, despite the tight web of hints on its limiting cases,
actually formulating M-theory remains an open problem
(see \cite[p. 2]{NicolaiHelling98}\cite[Sec. 12]{Moore14}\cite[p. 2]{ChesterPerlmutter18}\cite[Sec. 2]{HSS18}).

\medskip
\hypertarget{HypothesisH}{}
\noindent {\bf Hypothesis H -- M-brane charge quantization in Cohomotopy.}
But recent analysis of the
structure of the bouquet of super $p$-brane WZ terms
from the point of view of homotopy theory
(\cite{Sati13}\cite{FSS16a}\cite{FSS15}\cite{BSS18}, see \cite{FSS19a} for review)
has suggested a new hypothesis about the precise
nature of the brane charge structure in M-theory. This
{\it Hypothesis H} \cite{FSS19b}\cite{FSS19c} asserts that
the generalized cohomology theory which quantizes brane charge in M-theory,
in analogy to how twisted K-theory is expected to quantize D-brane charge
in string theory (see \cite[Sec. 1]{BSS18}\cite{GS19} for pointers),
is unstable \emph{Cohomotopy} cohomology theory
\cite{Borsuk36}\cite{Spanier49}\cite{Peterseon56},
specifically \emph{$J$-twisted Cohomotopy} \cite[Def. 3.1]{FSS19b}.
On flat orbifold spacetimes this is
unstable \emph{equivariant Cohomotopy} \cite[(2)]{SS19}\cite{HSS18},
traditionally considered in its stabilized approximation
\cite{Segal70}\cite{Carlsson84}\cite{Lueck}.
This is indicated on the left of \hyperlink{Figure1}{Figure 1},
which contextualizes our proposal and provides a global perspective:

{\hypertarget{Figure1}{}}
$$
  \xymatrix@R=9pt{
    \fbox{
      M-theory
    }
    \ar[rr]^-{
      \mbox{
        \tiny
        \color{blue}
        \begin{tabular}{c}
          small coupling
          \\
          limit
        \end{tabular}
      }
    }
    &&
    \fbox{
      \begin{tabular}{c}
        Perturbative
        \\
        string theory
      \end{tabular}
    }
    \ar[rr]^-{
      \mbox{
        \tiny
        \color{blue}
        \begin{tabular}{c}
          low energy
          \\
          limit
        \end{tabular}
      }
    }
    &&
    \fbox{
      \begin{tabular}{c}
        Effective
        \\
        quantum field theory
      \end{tabular}
    }
    \\
    \fbox{
      \begin{tabular}{c}
        M-brane charge in
        \\
        Cohomotopy theory
        \\
        {\small
          (\hyperlink{HypothesisH}{\it Hypothesis H})
          }
      \end{tabular}
    }
    \ar[rr]^-{
      \mbox{
        \tiny
        \color{blue}
        \begin{tabular}{c}
          Boardman
          \\
          homomorphism
        \end{tabular}
      }
    }_-{\beta}
    &&
    \fbox{
      \begin{tabular}{c}
        D-brane charge
        \\
        in K-theory
        \\
        \small (traditional conjecture)
      \end{tabular}
    }
    \ar[rr]^-{
      \mbox{
        \tiny
        \color{blue}
        \begin{tabular}{c}
          Chern
          \\
          character
        \end{tabular}
      }
    }_-{\mathrm{ch}}
    &&
    \fbox{
      \begin{tabular}{c}
        Fluxes in
        \\
        ordinary Cohomology
        \\
        $\phantom{A}$
      \end{tabular}
    }
    \\
    &
    \mathclap{
    \hspace{4cm}
    \mbox{
      \footnotesize
      \begin{tabular}{rcl}
        $\mathrm{image}(\beta)$ &$\!\!\!=\!\!\!$& $\phantom{\mbox{not}}$ liftable to M-theory
        \\
        $\mathrm{cokernel}(\beta)$ &$\!\!\!=\!\!\!$& not liftable to M-theory
      \end{tabular}
    }
    }
    & &
    \mathclap{
    \hspace{4cm}
    \mbox{
      \footnotesize
      \begin{tabular}{rcl}
        $\mathrm{image}({\rm ch})$ &$\!\!\!=\!\!\!$& Landscape
        \\
        $\mathrm{cokernel}({\rm ch})$ &$\!\!\!=\!\!\!$& Swampland
      \end{tabular}
    }
    }
  }
$$

\vspace{-.2cm}

\noindent {\footnotesize {\bf Figure 1:} Relations between M-brane charges in
M-theory via  Cohomotopy and D-brane charges in string theory via K-theory. }

\vspace{.3cm}

\noindent {\bf The Boardman homomorphism to equivariant K-theory.}
Displayed on the left of \hyperlink{Figure1}{\it Figure 1}
is the \emph{Boardman homomorphism} \cite[II.6]{Adams74},
which, under \emph{Hypothesis H},
is the M-theoretic analog of the Chern character
(that maps D-brane charge
in string theory to flux forms in effective field theory)
now mapping brane charge in non-perturbative M-theory to its perturbative approximation in string theory.
As discussed in detail in \cite[3.1.2]{SS19},
classical results in equivariant homotopy theory (see \cite{Blu17})
identify \cite{Segal70}\cite[8.5.1]{tomDieck79}\cite[1.13]{Lueck}
the stable equivariant Cohomotopy of a $G$-orbifold singularity
with
the \emph{Burnside ring} -- a non-linear combinatorial analog of
the $G$-linear representation ring. Furthermore,
under this identification, the Boardman homomorphism to
equivariant K-theory identifies with the homomorphism that linearizes
a combinatorial $G$-set (of M-branes) to a linear $G$-representation
(of D-brane Chan-Paton factors), see
\hyperlink{Figure2}{Figure 2}.
We lay out the relevant definitions in full detail in
\cref{TheBoardmanHomomorphism} below.
As explained in \cite{SS19}, we have the following picture:

\begin{center}
\hypertarget{Figure2}{}
\begin{tikzpicture}[scale=.8]

  \begin{scope}[shift={(0,1)}]

  \draw node at (0,7.2)
    {
      \tiny
      \color{blue}
      \begin{tabular}{c}
        equivariant Cohomotopy
        \\
        vanishing at infinity
        \\
        of Euclidean $G$-space
        \\
        in compatible RO-degree $V$
      \end{tabular}
    };

  \draw node at (0,6)
    {
      $
      \pi^{V}_{{}_{G}}
      \big(
        \big(
          \mathbb{R}^V
        \big)^{\mathrm{cpt}}
      \big)
    $
    };

  \draw[->] (0+2,6)
    to
      node {\colorbox{white}{\small $\Sigma^\infty$}}
      node[above]
        {
          \raisebox{.3cm}{
          \tiny
          \color{blue}
          stabilization
          }
        }
    (6-.8,6);

  \draw node at (6,6.9)
    {
      \tiny
      \color{blue}
      \begin{tabular}{c}
        stable
        \\
        equivariant
        \\
        Cohomotopy
      \end{tabular}
    };

  \draw node at (6,6)
    {
      $
        \mathbb{S}_G^0
      $
    };

  \draw[->] (6+.7,6)
    to
      node{\colorbox{white}{\footnotesize $\beta$}}
      node[above]
        {
          \raisebox{.3cm}{
          \tiny
          \color{blue}
          \begin{tabular}{c}
            Boardman
            \\
            homomorphism
          \end{tabular}
          }
        }
    (11-.9,6);

  \draw node at (6,5.5)
    {
      \begin{rotate}{270}
        $\!\!\simeq$
      \end{rotate}
    };

  \draw node at (6,5)
    {
      $
        A_G
      $
    };

  \draw node at (6,4.3)
    {
      \tiny
      \color{blue}
      \begin{tabular}{c}
        Burnside
        \\
        ring
      \end{tabular}
    };

  \draw[->] (6+.7,5) to
    node
    {
      \raisebox{-3pt}{
      \colorbox{white}
      {
        \small
        $
        \underset
        {
          \mbox{
            \tiny
            \color{blue}
            linearization
          }
        }
        {\footnotesize
          \mathbb{R}[-]
        }
        $
      }
      }
    }
    (11-.7,5);

  \draw node at (11,6.8)
    {
      \tiny
      \color{blue}
      \begin{tabular}{c}
        equivariant
        \\
        K-theory
      \end{tabular}
    };

  \draw node at (11,6)
    {
      $
        \mathrm{KO}_G^0
      $
    };

  \draw node at (11,5.5)
    {
      \begin{rotate}{270}
        $\!\!\!\!\simeq$
      \end{rotate}
    };

  \draw node at (11.2,5)
    {
      $
        \mathrm{RO}(G)
      $
    };

  \draw node at (11,4.3)
    {
      \tiny
      \color{blue}
      \begin{tabular}{c}
        representation
        \\
        ring
      \end{tabular}
    };

  \end{scope}

\begin{scope}[shift={(0,.4)}]

  \draw node at (0,4)
    {$
      \overset{
      }{
        \mbox{
          \tiny
          e.g. one $O^{{}^{-}}\!\!$-plane
          and two branes
        }
      }
    $};

  \draw node at (0,3)
    {
      $\overbrace{\phantom{AAAAAAAAAAAAAAAAAAA}}$
    };

  \draw node at (6,4)
    {$
      \overset{
      }{
        \mbox{
          \tiny
          \begin{tabular}{c}
            minus the trivial $G$-set
            \\
            with two regular $G$-sets
          \end{tabular}
        }
      }
    $};

  \draw node at (11,3)
    {
      $\overbrace{\phantom{AAAAAA}}$
    };

  \draw node at (11,4)
    {$
      \overset{
      }{
        \mbox{
          \tiny
          \begin{tabular}{c}
            minus the trivial $G$-representation
            \\
            plus two times the regular $G$-representation
          \end{tabular}
        }
      }
    $};

  \draw node at (6,3)
    {
      $\overbrace{\phantom{AAAAAA}}$
    };

  \draw[dashed]
    (0,0) circle (2);

  \draw (0,2+.2) node {\footnotesize $\infty$};
  \draw (0,-2-.2) node {\footnotesize $\infty$};
  \draw (2+.3,0) node {\footnotesize $\infty$};
  \draw (-2-.3,0) node {\footnotesize $\infty$};

  \draw[fill=white]
    (0,0) circle (.07);

  \draw[fill=black]
    (18:.6) circle (.07);
  \draw[fill=black]
    (18+90:.6) circle (.07);
  \draw[fill=black]
    (18+180:.6) circle (.07);
  \draw[fill=black]
    (18+270:.6) circle (.07);

  \draw[fill=black]
    (58:1.2) circle (.07);
  \draw[fill=black]
    (58+90:1.2) circle (.07);
  \draw[fill=black]
    (58+180:1.2) circle (.07);
  \draw[fill=black]
    (58+270:1.2) circle (.07);

  \draw[|->] (3.6,0) to ++(.5,0);
  \draw[|->] (8.4,0) to ++(.5,0);

  \begin{scope}[shift={(6,.2)}]

    \draw[fill=white] (0,2) circle (0.07);

    \draw (5,2) node {\small $+\mathbf{1}_{{}_{\mathrm{triv}}}$ };

    \draw[|->, olive] (-.05,2.12) arc (30:325:.2);

    \begin{scope}[shift={(0,.3)}]
    \draw[fill=black] (0,1) circle (0.07);
    \draw[fill=black] (0,.5) circle (0.07);
    \draw[fill=black] (0,0) circle (0.07);
    \draw[fill=black] (0,-.5) circle (0.07);

    \draw[|->, olive] (0-.1,1-.05) arc (90+6:270-16:.2);
    \draw[|->, olive] (0-.1,.5-.05) arc (90+6:270-16:.2);
    \draw[|->, olive] (0-.1,0-.05) arc (90+6:270-16:.2);

    \draw[|->, olive] (0+.1,-.5+.1) to[bend right=60] (0+.1,1-.03);

    \draw (5,0.25) node {\small $-\mathbf{4}_{{}_{\mathrm{reg}}}$ };

    \end{scope}

    \begin{scope}[shift={(0,-1.7)}]
    \draw[fill=black] (0,1) circle (0.07);
    \draw[fill=black] (0,.5) circle (0.07);
    \draw[fill=black] (0,0) circle (0.07);
    \draw[fill=black] (0,-.5) circle (0.07);

    \draw[|->, olive] (0-.1,1-.05) arc (90+6:270-16:.2);
    \draw[|->, olive] (0-.1,.5-.05) arc (90+6:270-16:.2);
    \draw[|->, olive] (0-.1,0-.05) arc (90+6:270-16:.2);

    \draw[|->, olive] (0+.1,-.5+.1) to[bend right=60] (0+.1,1-.03);

    \draw (5,0.25) node {\small $-\mathbf{4}_{{}_{\mathrm{reg}}}$ };

    \end{scope}

\end{scope}

  \end{scope}
\end{tikzpicture}
\end{center}
\vspace{-4mm}
\noindent {\bf \footnotesize Figure 2 -- Virtual $G$-representations of fractional branes classified by
equivariant Cohomotopy} {\footnotesize
in the vicinity of ADE-singularities, seen through the
Boardman homomorphism to equivariant K-theory, according to \cite{SS19}.
Shown is a situation for $G = \mathbb{Z}_4$.}

\medskip

With this explicit form 
of the equivariant Boardman homomorphism at $G$-singularities
($\beta \simeq \mathbb{R}[-]$)
in hand,
we obtain here a concrete computational handle on the implications
of \hyperlink{HypothesisH}{\it Hypothesis H}, 
which may serve as further tests of this hypothesis.

\medskip

\noindent {\bf Fractional D-brane charge at orientifold singularities under Hypothesis H.}
In \cite{SS19} we had laid out in detail how, under
\hyperlink{HypothesisH}{\it Hypothesis H}, equivariant Cohomotopy
measures charges of M5-branes stuck at ADE-orientifold singularities,
and how this translates, under the Boardman homomorphism, to
charges in equivariant K-theory of fractional D-branes
stuck at these singularities. The main result of \cite{SS19}
is that the lift of fractional D-brane charge on orientifolds
from equivariant K-theory through the Boardman homomorphism
to equivariant Cohomotopy (as in \hyperlink{Figure2}{Figure 2}) 
implies expected 
anomaly cancellation conditions for D-branes in orientifolds,
the \emph{RR-field tadpole cancellation conditions}.

\medskip
\noindent {\bf Approach: Numerical fractional D-brane charge under Hypothesis H.}
Here we investigate further implications of this lift of brane charge
from K-theory to Cohomotopy:
Using the explicit form of the Boardman homomorphism
explained below in \cref{TheBurnsideRingAndEquivariantKTheory}, we develop
in \cref{TheAlgorithm} an effective algorithm for
explicitly computing the image of the Boardman homomorphism as
a sublattice in the representation ring, 
and hence in the equivariant K-theory ring of fractional D-brane charges.
This allows us to concretely read off which fractional D-brane
charges lift to M-theory under \hyperlink{HypothesisH}{Hypothesis H},
as indicated in \hyperlink{Figure1}{Figure 1}.

\medskip
\noindent {\bf Physics result.} Our main result, Theorem \ref{CokernelOfBetaInVariousExamples} below,
establishes that, in all the examples of singularity groups which we compute,
the image of the Boardman homomorphism in the real 
representation ring
(appropriate for orientifold charges) consists precisely of the
\emph{integral} characters; these are equivalently the
non-irrational characters (by Prop. \ref{RationalCharactersAreInteger} below).
Under the translation of
\hyperlink{Table1}{Table 1}
this means that \hyperlink{HypothesisH}{\it Hypothesis H} removes
precisely the irrational D-brane
charges from the charge lattice of fractional D-branes stuck at
ADE-singularities. Hence, in as far as irrational D-brane charge
is a paradox \cite{BachasDouglasSchweigert00}\cite{Taylor00}\cite{Zhou01}\cite{Rajan02},
\hyperlink{HypothesisH}{Hypothesis H} resolves this paradox.

\medskip
We explain now in \cref{TheBoardmanHomomorphism} 
how these issues are formulated mathematically,
translating the question of lifting D-brane charge to Cohomotopy
to a question purely in representation theory.

\section{The equivariant Boardman homomorphism $\beta$}
\label{TheBoardmanHomomorphism}

\subsection{The Burnside ring and K-theory}
\label{TheBurnsideRingAndEquivariantKTheory} 

Finite group actions
control orbifold spacetime singularities in string theory.
For $G$ a finite group one may consider linear as well as purely combinatorial \emph{actions} of $G$ on some set
(see e.g. \cite{Dr71}\cite{tomDieck79}\cite{Ker}\cite{LP}). We will be concerned with the relation between these  two
types of actions (see e.g. \cite{tomDieck79}\cite{Be}\cite{Bo}).

\medskip

\noindent {\bf Basics.} Traditionally,
 for $k$ any field, the linear actions receive more attention as they are the $k$-linear \emph{representations}
of $G$, namely the group homomorphisms $G \to \mathrm{Aut}_k(V)$ from $G$ to the $k$-linear invertible
maps from a given $k$-vector space $V$ to itself. More elementary than this concept is that of plain \emph{$G$-sets},
which are instead group homomorphism of the form $G \to \mathrm{Aut}(S)$, from $G$ to all the invertible
functions from some set $S$ to itself (permutations).

\medskip
To emphasize that these two concepts, while clearly different, are conceptually related, one may appeal to the
lore of the ``field with one element'' $\mathbb{F}_1$ \cite{Ti}\cite{KS}\cite{So}\cite{CCM}\cite{Ma}
(see \cite{Th}\cite{Lo} for recent surveys)  and regard plain sets as vector spaces over $\mathbb{F}_1$,
and plain set-theoretic permutations as being the $\mathbb{F}_1$-linear maps
$\mathrm{Aut}(S) = \mathrm{Aut}_{\mathbb{F}_1}(S)$.

\medskip
In any case, for every finite group $G$ and field $k$, the isomorphism classes of finite nonlinear  actions
and of finite-dimensional linear actions of $G$ form two rings-without-negatives,
\begin{equation}
  \label{TheRingsWithoutNegatives}
  \left(G \mathrm{Set}^{\mathrm{fin}}/_\sim,\; \sqcup, \times\right)
  =
  \left(
    G \mathrm{Rep}^{\mathrm{fin}}_{\mathbb{F}_1}/_\sim,\; \oplus_{\mathbb{F}_1}, \otimes_{\mathbb{F}_1}
  \right)
 \qquad
 \text{and}
 \qquad
  \left(G \mathrm{Rep}^{\mathrm{fin}}_k/_\sim,\; \oplus_k, \otimes_k\right),
\end{equation}
where addition is given by disjoint union of $G$-sets and by direct sum of $G$-representations, respectively, while the product operation
is given by Cartesian product of $G$-sets and by tensor product of $G$-representations, respectively.

\medskip
We will be interested in a canonical comparison map from $G$-sets to $G$-representation:
By forming $k$-linear combinations of elements of a finite set, every
$G$-set
$G \;\;\;\raisebox{-1pt}{\begin{rotate}{90} $\curvearrowright$\end{rotate}}\;S$
in $A(G)$ spans a $k$-linear representation
\begin{equation}
  \label{PermutationRepresentation}
  \xymatrix{ k[S] \ar@(ul,ur)|{\;\!G\;\!} }
  \phantom{AAA}
  g(v)
  =
  g\Big(
    \underset{s \in S}{\sum} \, \underset{ \in k }{\underbrace{v_s}} \cdot \underset{ \in S}{\underbrace{s}}
  \Big)
  \;:=\;
  \underset{h \in G}{\sum} v_i \cdot g(s)\;.
\end{equation}
These are called the \emph{permutation representations}.
The archetypical example is the \emph{regular representation} $k[G]$, which is the linearization of $G$
acting on its own underlying set by group multiplication from the left.
A simple but important example is the trivial 1-dimensional representation $\mathbf{1}$ of $G$, which is
the linearization of the point, regarded as a $G$-set:
\begin{equation}
  \label{trivialRepresentation}
  \mathbf{1} \simeq k[\ast]\;.
\end{equation}
The construction \eqref{PermutationRepresentation} of permutation representations from  $G$-sets  is
clearly linear and multiplicative, in that it extends to a homomorphism between the above
rings-without-negatives \eqref{TheRingsWithoutNegatives}
\begin{equation}
  \label{betaOnPlainReps}
  \xymatrix{
    \left(G \mathrm{Set}^{\mathrm{fin}}/_\sim, \sqcup, \times \right)
    \ar[rr]^-{k[-]}
    &&
    \left( G \mathrm{Rep}^{\mathrm{fin}}/_\sim , \oplus, \otimes \right)
  }.
\end{equation}
While this establishes the canonical comparison map between nonlinear and linear $G$-actions, there is nothing
much of interest to be said about it.

\medskip
\noindent {\bf Passage to K-theory.}
This situation changes drastically as soon as we consider not
just plain $G$-sets and $G$-representations, but also their ``anti-$G$-sets'' and ``anti-$G$-representations'',
namely as we group-complete the rings-without-negatives in \eqref{TheRingsWithoutNegatives} to actual rings, by
adjoining additive inverses for all elements.
Concretely, a \emph{virtual} $G$-representation is represented by a pair $(V^+, V^-)$ of two $G$-representations,
thought of as a plain $G$-representation
$V^+$ and an \emph{anti}-$G$-representation $V^-$; and the K-group completion is obtained by quotienting out from the
evident group (ring) that these virtual representation/anti-representation pairs form the equivalence relation
$$
  (V, V) \sim 0
  \phantom{AAA}
  \mbox{for all $V \in G\mathrm{Rep}^{\mathrm{fin}}_k/_\sim$}
  \,.
$$
This can viewed  as exhibiting pair-creation/annihilation of bound states of a representation with its own
anti-representation. The resulting ring is called the \emph{representation ring} of $G$, denoted
\begin{equation}
  \label{RepresentationRing}
  R_k(G) \;:=\; K\left( G \mathrm{Rep}^{\mathrm{fin}}_k/_{\sim}, \oplus, \otimes \right)
  \,.
\end{equation}
An analogous construction applies to virtual $G$-sets represented by pairs consisting of a $G$-set and an
anti-$G$-set, subject to the relation of pair-creation/annihilation of bound states of a $G$-set $S$ with its
anti-$G$-set:
$$
  (S,S) \sim 0
  \phantom{AA}
  \mbox{for all $S \in G \mathrm{Set}^{\mathrm{fin}}$}.
$$
Now the resulting ring is known as the \emph{Burnside ring} of $G$  (see e.g. \cite{Sol}\cite{Dr71}\cite{tomDieck79}\cite{Ker}), denoted
\begin{equation}
  \label{BurnsideRing}
  A(G) \;:=\; K\left( G \mathrm{Set}^{\mathrm{fin}}/_{\sim}, \sqcup, \times \right)
  \,.
\end{equation}
While plain permutation representations \eqref{PermutationRepresentation} generally form just a very
small subset of all isomorphism classes of $k$-linear representations, the passage to \emph{virtual}
permutation representations drastically changes the picture: Since every plain permutation representation
decays into a direct sum of irreducible linear representations, the \emph{formal difference} of two permutation representations in a virtual permutation representation may partially cancel out to become equal, in the
representation ring, to a representation that is not itself a plain permutation representation.

\medskip
For example, in the simplest non-trivial case, where $G = C_2$ is the finite group of order 2, the
1-dimensional alternating representation $\mathbf{1}_{\mathrm{alt}}$ is clearly not a permutation
representation itself. But it is a direct summand in the regular representation
$k[C_2] = \mathbf{1} + \mathbf{1}_{\mathrm{alt}}$.
Since the other summand is a permutation representation, $\mathbf{1} = k[\ast]$, the alternating
representation may then be isolated as the formal difference
$\mathbf{1}_{\mathrm{alt}} \;=\; k[C_2] - k[\ast]$, thus as a virtual permutation representation.

\medskip

\medskip
\noindent {\bf The comparison morphism.} Therefore, while the step from plain to \emph{virtual} actions and representations is small, it has drastic
consequences, as it potentially reduces much of linear representation theory to pure combinatorics.
In order to quantify this effect, one observes that the construction \eqref{betaOnPlainReps} of permutation
 representations evidently extends linearly to virtual $G$-sets and virtual $G$-representations, to a homomorphism
\begin{equation}
  \label{TheComparisonMap}
  \fbox{
  \xymatrix{
    A(G)
      \ar[rr]^{ \beta := k[-] }
    &&
    R_k(G)
  }
  }
\end{equation}
from the Burnside ring \eqref{BurnsideRing} to the representation ring \eqref{RepresentationRing}, taking virtual $G$-sets to virtual $G$-representations.

\noindent We may associate to the  representation theory of $G$ over $k$ the following interpretation.

\begin{center}
\begin{tabular}{|c||c|}
\hline
{\bf Space} & {\bf Meaning} \\ \hline \hline
 Cokernel of $\beta$ &
    Linear algebra
    invisible to
    pure combinatorics
  \\ \hline
  Kernel of $\beta$
  &
Pure combinatorics
   invisible to
    linear algebras\\
    \hline
    \end{tabular}
\end{center}
Intuition might suggest that generally the cokernel of $\beta$ is large, while the kernel of $\beta$ is generally small.
This is indeed the case for the restriction \eqref{betaOnPlainReps} of $\beta$ to actual (as opposed to virtual)
$G$-sets and $G$-representations. However,  the inclusion of anti-$G$-sets and anti-representations and passage
to the K-groups of virtual $G$-sets and virtual $G$-representations completely changes the picture.
It turns out that the kernel of $\beta$ almost never vanishes: in characteristic zero
its rank is the difference of the number of non-cyclic by cyclic subgroups of $G$ (Prop. \ref{BetaInjectiveOnlyForCyclicGroups} below).
At the same time, classical results give that the cokernel of $\beta$ often vanishes:
for instance, over $k =\mathbb{Q}$ it vanishes for all cyclic groups (Prop. \ref{RationalBetaForCyclic} below),
as well as for all $p$-groups (\cite{Segal72}, recalled as Prop. \ref{SegalTheorem} below), while for $G = S_n$
a symmetric group, the cokernel of
$\beta$ vanishes even for all fields $k$ of characteristic zero (Prop. \ref{SymmetricSurjectivityOfBeta} below).

\medskip

This is a remarkable state of affairs, which deserves further investigation.

\medskip
\noindent The mathematical incarnation of the goals and results stated at the end of \cref{BraneChargeQuantizationInMTheory}
are the following:

\medskip
\noindent {\bf Goal.} 
Here our goal is to give explicit descriptions of the cokernel of $\beta$,
over the rational, real and complex numbers, for further concrete examples of finite groups $G$.
We are particularly interested in the case of the \emph{binary Platonic groups}, namely
the finite subgroups of $\mathrm{SU}(2)$ (recalled in  \cref{ThePlatonicGroups}).

\medskip

\noindent {\bf Method.} In \cref{TheAlgorithm} we describe an algorithm for the
image of $\beta$, Theorem \ref{AlgorithmForImageOfBeta} below.
In \cref{ImageOfBetaExamples} we apply this algorithm and compute the cokernel of $\beta$ in various concrete examples, Theorem \ref{CokernelOfBetaInVariousExamples}.
In \cref{Background} we collect some background material.

\medskip

\noindent {\bf Results.} The table in Theorem \ref{CokernelOfBetaInVariousExamples} shows that in all examples computed here, notably for the three exceptional finite subgroups of $\mathrm{SU}(2)$ as well as the seven first cases of binary dihedral groups, the image of $\beta$ in the real representation ring consists precisely of the sub-lattice of
integer characters, hence (by Prop. \ref{RationalCharactersAreInteger}) of non-irrational characters.
The same holds true for larger classes of Examples which we have
computed, but are not showing here. We conjecture that it holds
true for all binary dihedral groups.

%

\subsection{Analysis of the image of $\beta$}
\label{GeneralFacts}

In order to set the scene for
the considerations below,
we record some well-known general facts about the image of the comparison morphism $\beta$ \eqref{TheComparisonMap}.

\begin{defn}[Sub-lattice of integer-valued characters]
  \label{SubLatticOfIntegerValuedCharacters}
  For $G$ a finite group and $k$ a field, write
  \begin{equation}
    \label{InclusionOfIntegerCharacters}
    R_k^{\mathrm{int}}(G)
    \xymatrix{\ar@{^{(}->}[r]&}
    R_k(G)
  \end{equation}
  for the sub-lattice of the representation ring given by those
  representations $V \in R_k(G)$ whose characters $\chi_{{}_{V}}$ (Def. \ref{Character})
  are integer-valued
  $$
    \chi_{{}_V} \;:\; g \longmapsto \chi_{{}_V}(g) \in \mathbb{Z} \subset k
    \,.
  $$
\end{defn}
\begin{prop}[$\beta$ takes values in integer-valued characters]
  \label{CorestrictionToIntegerCharacters}
  The comparison morphism $\beta$ \eqref{TheComparisonMap}
  has its image inside the integer-valued characters (Def. \ref{SubLatticOfIntegerValuedCharacters}),
  hence it factors as
  \begin{equation}
    \beta
    \;:\;
    \xymatrix{
      A(G)
      \ar[r]^-{  \beta^{\mathrm{int}}_k}
      &
      R^{\mathrm{int}}_k(G)
      \; \ar@{^{(}->}[r]
      &
      R_k(G)
    }.
  \end{equation}
\end{prop}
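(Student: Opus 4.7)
The plan is to reduce the claim to the well-known fixed-point formula for characters of permutation representations, and then to extend by linearity to virtual $G$-sets.

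First, I would observe that since both the Burnside ring $A(G)$ and the representation ring $R_k(G)$ arise as group completions of the monoids in \eqref{TheRingsWithoutNegatives}, and since $\beta$ was defined in \eqref{TheComparisonMap} as the linear extension of the assignment $S \mapsto k[S]$, while the character $\chi_V$ is itself additive under direct sums, it suffices to verify the integrality claim on actual (non-virtual) finite $G$-sets $S \in G\mathrm{Set}^{\mathrm{fin}}$. Indeed, once $\chi_{k[S]}$ takes values in $\mathbb{Z} \subset k$ for every $G$-set $S$, the same will follow for any formal difference $[S^+] - [S^-] \in A(G)$, since characters of virtual representations are by definition the corresponding differences $\chi_{k[S^+]} - \chi_{k[S^-]}$.

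Second, for a given $G$-set $S$, I would compute the character of the permutation representation $k[S]$ defined in \eqref{PermutationRepresentation} by using the canonical basis given by $S$ itself. In this basis, the action of any element $g \in G$ is represented by a permutation matrix whose $(s,s)$-diagonal entry is $1$ if $g \cdot s = s$ and $0$ otherwise. Taking the trace therefore yields the standard fixed-point formula
$$
  \chi_{k[S]}(g) \;=\; \#\{ s \in S \,:\, g \cdot s = s \} \;=\; |S^g| \,,
$$
which is manifestly a non-negative integer, independently of the ground field $k$.

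Combining these two steps, for an arbitrary class $[S^+] - [S^-] \in A(G)$ we obtain
$$
  \chi_{\beta([S^+]-[S^-])}(g) \;=\; |(S^+)^g| - |(S^-)^g| \;\in\; \mathbb{Z} \,,
$$
so that $\beta$ factors through the sub-lattice $R_k^{\mathrm{int}}(G)$ of Def.~\ref{SubLatticOfIntegerValuedCharacters}. I do not anticipate a genuine obstacle: the argument is essentially the fixed-point interpretation of permutation characters together with the universal property of the group completions. The only minor point of care is to note that the integers embed into the prime subring of any field $k$, so that the integrality statement $\chi_V(g) \in \mathbb{Z} \subset k$ makes unambiguous sense regardless of the characteristic of $k$.
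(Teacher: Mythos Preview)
Your proof is correct and follows essentially the same approach as the paper: the paper's proof consists of a single reference to Example~\ref{CharacterOfPermutationRepresentation}, which records precisely the fixed-point formula $\chi_{k[S]}(g) = |S^g| \in \mathbb{N}$ that you derive in your second step. You have simply made explicit the trace computation and the extension by linearity to virtual $G$-sets that the paper leaves implicit.
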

\begin{proof}
  By Example \ref{CharacterOfPermutationRepresentation}.
\end{proof}
The following is elementary, but important:
\begin{prop}[e.g. {\cite{Naik}}]
  \label{RationalCharactersAreInteger}
  If the ground field $k$ has characteristic zero, then a character (Def. \ref{Character}) that takes values in the
  rational numbers $\mathbb{Q} \subset k$ in fact already takes values in the integers $\mathbb{Z} \subset \mathbb{Q} \subset k$.
  Hence if $R_{k}^{\mathrm{rat}}(G) \subset R_k(G)$ denotes the sublattice of rational-valued characters, in analogy
  to the sub-lattice of integer-valued characters in Def. \ref{SubLatticOfIntegerValuedCharacters}, then these sub-lattices are in fact equal:
  $$
    \xymatrix{
      R^{\mathrm{int}}_{k}(G)
      \ar[r]^-{=}
      &
      R^{\mathrm{rat}}_k(G)
      \subset
      R_k(G)
    }
    \phantom{AAA}
    \mbox{for $\mathbb{Q} \subset k$}
    \,.
  $$
  In particular, if the ground field $k = \mathbb{Q}$ is itself the rational numbers, then all characters are integer-valued characters (Def. \ref{SubLatticOfIntegerValuedCharacters}),
  hence in this case the canonical inclusion \eqref{InclusionOfIntegerCharacters} is an isomorphism:
  $$
    \xymatrix{
      R^{\mathrm{int}}_{\mathbb{Q}}(G)
      \ar[r]^-{=}
      &
      R_{\mathbb{Q}}(G)
    }.
  $$
\end{prop}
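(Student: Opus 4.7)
The plan is to reduce everything to the classical fact that character values are algebraic integers, combined with the fact that $\mathbb{Z}$ is integrally closed in $\mathbb{Q}$.

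First, I would prove the core claim: for any finite-dimensional representation $\rho\colon G \to \mathrm{Aut}_k(V)$ over a field $k$ of characteristic zero, and for any $g \in G$, the character value $\chi_V(g) = \mathrm{tr}(\rho(g))$ lies in the ring of algebraic integers inside $k$. The argument is standard: since $g$ has finite order $n$ in $G$, the operator $\rho(g)$ satisfies $\rho(g)^n = \mathrm{id}_V$, hence its minimal polynomial divides $X^n - 1$. After extending scalars if necessary, $\rho(g)$ diagonalizes with eigenvalues that are $n$-th roots of unity, each an algebraic integer. The trace is then a finite sum of algebraic integers, and the algebraic integers in $k$ form a subring, so $\chi_V(g)$ is an algebraic integer.

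Next, I would invoke the classical fact that $\mathbb{Z}$ is integrally closed in $\mathbb{Q}$: an element of $\mathbb{Q}$ that is an algebraic integer must actually lie in $\mathbb{Z}$ (one writes the rational in lowest terms and checks that a monic integer polynomial relation forces the denominator to be a unit). Combining this with the previous step gives: if $\chi_V(g) \in \mathbb{Q} \subset k$, then $\chi_V(g) \in \mathbb{Z}$. This proves the equality $R^{\mathrm{int}}_k(G) = R^{\mathrm{rat}}_k(G)$ for any characteristic-zero field $k \supseteq \mathbb{Q}$.

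For the final assertion, with $k = \mathbb{Q}$ itself, I only need to observe that the character of a $\mathbb{Q}$-linear representation automatically takes values in $\mathbb{Q}$, since the trace of a $\mathbb{Q}$-linear endomorphism of a finite-dimensional $\mathbb{Q}$-vector space is by construction a rational number. Hence every character of a $\mathbb{Q}$-representation is rational-valued, and by the preceding paragraph integer-valued, so $R^{\mathrm{int}}_\mathbb{Q}(G) = R_\mathbb{Q}(G)$. There is no real obstacle here; the only step requiring any care is the identification of character values with sums of roots of unity, which is a standard application of the finite order of group elements together with diagonalizability of operators with separable minimal polynomial in characteristic zero.
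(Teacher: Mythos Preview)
Your proof is correct and follows essentially the same approach as the paper: the paper's two-sentence argument simply asserts that characters are cyclotomic integers and that the only rational cyclotomic integers are ordinary integers, which is exactly what you unpack in more detail via the eigenvalue argument and integral closedness of $\mathbb{Z}$ in $\mathbb{Q}$.
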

\begin{proof}
  In general, characters are cyclotomic integers. Over the rationals the only cyclotomic integers
  are the actual integers.
\end{proof}

\begin{remark}[Factorizations of the comparison map $\beta$]
 \label{FactorizationsOfBeta}
In summary, Prop. \ref{CorestrictionToIntegerCharacters} and Prop. \ref{RationalCharactersAreInteger} say that we have the following commuting diagram of factorizations of the morphism $\beta$ \eqref{TheComparisonMap}
that sends $G$-sets to their linear permutation representation:
$$
  \xymatrix@C=7pt{
    A(G)
    \ar[dr]|{ \beta^{\mathrm{int}}_{\mathbb{Q}} }
    \ar@/^.4pc/[drrr]|{ \beta^{\mathrm{int}}_{\mathbb{R}} }
    \ar@/^2pc/[drrrrr]|{ \beta^{\mathrm{int}}_{\mathbb{C}} }
    \ar@/^6.7pc/[dddrrrrrrr]^{ \beta_{\mathbb{C}} }
    \ar@/_3pc/[dddrrr]_{ \beta_{\mathbb{Q}} }
    \\
    &
    R^{\mathrm{int}}_{\mathbb{Q}}(G)
    \ar@{^{(}->}[rr]
    \ar@{=}[dr]
    &&
    R^{\mathrm{int}}_{\mathbb{R}}(G)
    \ar@{^{(}->}[rr]
    \ar@{=}[dr]
    &&
    R^{\mathrm{int}}_{\mathbb{C}}(G)
    \ar@{=}[dr]
    \\
    &
    &
    R^{\mathrm{rat}}_{\mathbb{Q}}(G)
    \ar@{^{(}->}[rr]
    \ar@{=}[dr]
    &&
    R^{\mathrm{rat}}_{\mathbb{R}}(G)
    \ar@{^{(}->}[rr]
    \ar@{^{(}->}[dr]
    &&
    R^{\mathrm{rat}}_{\mathbb{C}}(G)
    \ar@{^{(}->}[dr]
    \\
    &
    &
    &
    R_{\mathbb{Q}}(G)
    \ar@{^{(}->}[rr]
    &&
    R_{\mathbb{R}}(G)
    \ar@{^{(}->}[rr]
    &&
    R_{\mathbb{C}}(G)
  }
$$
\end{remark}

Hence it is worthwhile to first record what is known about the image of $\beta$ over $\mathbb{Q}$;

\begin{prop}[e.g. {\cite[proof of Prop. 4.5.4]{tomDieck09}}]
  \label{OverQFullRankSublattice}
  Over the rational numbers, $k = \mathbb{Q}$, the image of $A(G) \overset{\beta}{\to}R_{\mathbb{Q}}(G)$ \eqref{TheComparisonMap}
  is at least a sub-lattice of full rank (i.e. has the same \emph{number} of generators as $R_{\mathbb{Q}}(G)$).
  This full-rank sublattice is spanned by the permutation representations \eqref{PermutationRepresentation} of the form
  $\mathbb{Q}[G/C_i]$ for $C_i \subset C_n$ ranging over the cyclic subgroups.
\end{prop}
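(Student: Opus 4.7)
The plan is to combine Artin's induction theorem with the classical identification of the rank of the rational representation ring.

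First, I would recall the standard fact that the rank of $R_{\mathbb{Q}}(G)$ as a free abelian group equals the number of conjugacy classes of cyclic subgroups of $G$. The underlying reason is that over $\mathbb{Q}$ two elements of $G$ have the same character value under every rational representation if and only if they generate conjugate cyclic subgroups (rational conjugacy), so the number of rational irreducibles matches the number of conjugacy classes of cyclic subgroups $C_1, \dots, C_r$.

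Next, I would invoke Artin's induction theorem, which states that every character of a rational representation of $G$ can be written as a $\mathbb{Q}$-linear combination of characters of representations induced from the trivial representation of cyclic subgroups. By the elementary identification $\mathrm{Ind}_{C}^{G} \mathbf{1} \;\simeq\; \mathbb{Q}[G/C]$ of induction of the trivial representation with the permutation representation on cosets, it follows that the elements
\[
\beta\bigl([G/C_1]\bigr),\; \dots,\; \beta\bigl([G/C_r]\bigr) \;\in\; R_{\mathbb{Q}}(G)
\]
span $R_{\mathbb{Q}}(G) \otimes_{\mathbb{Z}} \mathbb{Q}$ as a $\mathbb{Q}$-vector space. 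Since the number $r$ of these generators equals the rank of $R_{\mathbb{Q}}(G)$ itself, they must be $\mathbb{Q}$-linearly independent, and hence the $\mathbb{Z}$-sublattice they span inside $R_{\mathbb{Q}}(G)$ is of full rank.

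The main obstacle is really just the invocation of Artin's induction theorem (a non-trivial but classical result), together with ensuring that the linear independence claim is justified --- this can be done directly by observing that the character matrix $\bigl(\chi_{\mathbb{Q}[G/C_i]}(g_j)\bigr)_{i,j}$, evaluated on chosen generators $g_j$ of the $C_j$, is non-singular (its entries encode the number of fixed cosets, and a triangularity argument with respect to subgroup inclusion of the $C_i$ shows non-vanishing of the determinant). Everything beyond this is dimension counting.
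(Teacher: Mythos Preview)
Your argument is correct. The paper does not actually supply its own proof of this proposition; it merely cites tom Dieck's lecture notes (Prop.~4.5.4) for the result. Your approach---combining Artin's induction theorem (which gives the $\mathbb{Q}$-span) with the identification of $\mathrm{Ind}_C^G \mathbf{1}$ as $\mathbb{Q}[G/C]$ and the rank count for $R_{\mathbb{Q}}(G)$ via rational conjugacy classes---is the standard route and is essentially what one finds in tom Dieck's treatment. The triangularity remark at the end is a nice redundancy check but, as you note, already superfluous once the dimension count forces linear independence.
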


\begin{prop}[e.g. {\cite[Example (4.4.4)]{tomDieck09}}]
  \label{RationalBetaForCyclic}
  For $G = C_n = \mathbb{Z}/n$ a cyclic group and $k = \mathbb{Q}$ the rational numbers, $\beta$ is an isomorphism
  $$
    \xymatrix{
      A(C_n)
      \ar@{->}[r]^-{\beta}_-{\simeq}
      &
      R_{\mathbb{Q}}(C_n)
    }
    \,.
  $$
\end{prop}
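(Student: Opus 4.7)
The plan is to exhibit compatible bases on both sides, write $\beta$ as a matrix in these bases, and observe that it is triangular with unit diagonal --- equivalently, to invoke M\"obius inversion on the divisor poset of $n$.

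First I would identify bases. Since every subgroup of $C_n$ is itself cyclic, indexed by a divisor $d \mid n$, the classes $\{[C_n/C_d]\}_{d \mid n}$ exhaust the transitive $C_n$-sets and form a $\mathbb{Z}$-basis of $A(C_n)$. On the linear side, the irreducible $\mathbb{Q}$-representations of $C_n$ are the inflations $\rho_d := \mathbb{Q}(\zeta_d)$ of the primitive cyclotomic representations through the quotient maps $C_n \twoheadrightarrow C_d$, one for each $d \mid n$. Thus both $A(C_n)$ and $R_{\mathbb{Q}}(C_n)$ are free abelian of the same finite rank $\tau(n)$, the number of divisors of $n$.

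Next I would compute $\beta$ on these basis elements. Since $C_n/C_d \cong C_{n/d}$ as a $C_n$-set, with $C_n$ acting through its quotient $C_n \twoheadrightarrow C_{n/d}$, the permutation representation is the inflation of the regular representation of $C_{n/d}$. The factorization $x^k - 1 = \prod_{e \mid k} \Phi_e(x)$ of cyclotomic polynomials over $\mathbb{Q}$ then gives
$$
\beta\bigl([C_n/C_d]\bigr) \;\cong\; \mathbb{Q}[x]/\bigl(x^{n/d}-1\bigr) \;\cong\; \prod_{e \mid n/d} \mathbb{Q}[x]/\Phi_e(x) \;\cong\; \bigoplus_{e \mid n/d} \rho_e.
$$

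Finally I would read off the matrix. Re-indexing rows by $d' := n/d$ and ordering divisors by magnitude (a linear extension of divisibility), the matrix $B$ defined by $\beta([C_n/C_{n/d'}]) = \sum_e B_{d',e}\,\rho_e$ has $B_{d',e} = 1$ if $e \mid d'$ and $0$ otherwise. Since $e \mid d'$ forces $e \leq d'$, the matrix $B$ is lower triangular with unit diagonal, so $\det B = 1$ and $\beta$ is an isomorphism of abelian groups. There is no genuine obstacle here: the content is the cyclotomic decomposition of the regular $\mathbb{Q}$-representation of a cyclic group, after which everything reduces to M\"obius inversion on the divisor lattice of $n$ (the inverse matrix has entries $\mu(d'/e)$, yielding the explicit formula $\rho_q = \sum_{e \mid q} \mu(q/e)\,\mathbb{Q}[C_n/C_{n/e}]$). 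This both refines and is consistent with Proposition~\ref{OverQFullRankSublattice}, which already guarantees that the image of $\beta$ has full rank and is spanned by the permutation representations $\mathbb{Q}[C_n/C_d]$.
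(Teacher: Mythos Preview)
Your argument is correct and is the standard proof via the cyclotomic decomposition of the rational group algebra of a cyclic group. Note, however, that the paper does not actually prove this proposition: it is stated with a citation to \cite[Example (4.4.4)]{tomDieck09} and no proof is given in the text. What you have written is essentially the argument found in that reference --- the key input is the factorization $\mathbb{Q}[C_m] \cong \prod_{e \mid m} \mathbb{Q}(\zeta_e)$, after which the zeta/M\"obius matrix of the divisor poset does the rest. Your observation that this refines Proposition~\ref{OverQFullRankSublattice} is apt: that proposition (also cited from tom Dieck) shows the image has full rank for general $G$, whereas here the triangular matrix has unit diagonal, upgrading ``full rank'' to ``index one''.
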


\begin{prop}
  \label{BetaInjectiveOnlyForCyclicGroups}
  The only finite groups $G$ for which $A(G) \xrightarrow{\beta_k} R_k(G)$ is injective over $k = \mathbb{Q}$ (hence over $k = \mathbb{R}, \mathbb{C}$)
  are the cyclic groups $G = C_n$.
\end{prop}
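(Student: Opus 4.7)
The plan is to reduce to the case $k = \mathbb{Q}$ and then compare ranks. First, I would observe that for $k = \mathbb{R}$ and $k = \mathbb{C}$, the extension-of-scalars homomorphism $R_{\mathbb{Q}}(G) \to R_k(G)$ appearing in Remark \ref{FactorizationsOfBeta} is injective: a rational (virtual) representation is determined up to isomorphism by its character, and its character is unchanged by base extension. Since $\beta_k$ factors as $\iota_k \circ \beta_{\mathbb{Q}}$ with $\iota_k$ injective, we get
$$
\ker\!\bigl(\beta_{\mathbb{Q}}\bigr) \;=\; \ker\!\bigl(\beta_{\mathbb{R}}\bigr) \;=\; \ker\!\bigl(\beta_{\mathbb{C}}\bigr),
$$
so it suffices to treat the case $k = \mathbb{Q}$.

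Second, I would invoke two standard rank formulas for these free abelian groups: $A(G)$ has rank equal to the number of conjugacy classes of subgroups of $G$ (the transitive $G$-sets $G/H$ form a $\mathbb{Z}$-basis indexed by such classes), while $R_{\mathbb{Q}}(G)$ has rank equal to the number of conjugacy classes of \emph{cyclic} subgroups of $G$ (a classical theorem due to Artin, dual to Artin induction). Combining with Prop \ref{OverQFullRankSublattice}, which asserts that the image of $\beta_{\mathbb{Q}}$ has full rank in $R_{\mathbb{Q}}(G)$, it follows that $\ker(\beta_{\mathbb{Q}})$ is a free abelian group of rank
$$
\mathrm{rank}\bigl(A(G)\bigr) \;-\; \mathrm{rank}\bigl(R_{\mathbb{Q}}(G)\bigr) \;=\; \#\bigl\{\text{conjugacy classes of non-cyclic subgroups of } G\bigr\}.
$$

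Finally, $\beta_{\mathbb{Q}}$ is injective precisely when this count is zero, i.e.\ when every subgroup of $G$ is cyclic. Since $G$ is itself a subgroup of $G$, this forces $G$ to be cyclic; conversely, every subgroup of a cyclic group is cyclic, and in this case Prop \ref{RationalBetaForCyclic} even gives that $\beta_{\mathbb{Q}}$ is an isomorphism. The main non-trivial input is the Artin rank formula $\mathrm{rank}\bigl(R_{\mathbb{Q}}(G)\bigr) = \#\{\text{conj.\ classes of cyclic subgroups}\}$; everything else is a direct rank count, plus the (routine) injectivity of $R_{\mathbb{Q}}(G) \hookrightarrow R_{\mathbb{R}}(G), R_{\mathbb{C}}(G)$.
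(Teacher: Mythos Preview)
Your proof is correct and follows essentially the same approach as the paper: both arguments compare the rank of $A(G)$ (the number of conjugacy classes of subgroups) with the rank of $R_{\mathbb{Q}}(G)$ (the number of conjugacy classes of cyclic subgroups), and then invoke Prop.~\ref{RationalBetaForCyclic} for the cyclic case. You add a bit of extra detail by spelling out the reduction from $k = \mathbb{R}, \mathbb{C}$ to $k = \mathbb{Q}$ and by using Prop.~\ref{OverQFullRankSublattice} to compute the kernel rank exactly; the paper simply observes that a strict rank inequality of free abelian groups already rules out injectivity, so that last step is not strictly needed.
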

\begin{proof}
  We know that a linear basis for $A(G)$ is given by the cosets $G/H$ for $H$ ranging over conjugacy classes of all subgroups of $G$,
  while a linear basis for $R_{\mathbb{Q}}(G)$ is given by the isomorphism classes of irreducible $\mathbb{Q}$-linear representations.
  But the latter are in bijection to just the \emph{cyclic} subgroups of $G$ (e.g. \cite[Prop. 4.5.4]{tomDieck09} ). This means
  that when $G$ is not itself cyclic, then the cardinality of a linear basis for $A(G)$ is strictly larger than the cardinality
  of a linear basis for $R_{\mathbb{Q}}(G)$, so that no morphism $A(G) \to R_{\mathbb{Q}}(G)$ can be injective.
  On the other hand, when $G$ is a cyclic group then $\beta$ is an isomorphism by Prop. \ref{RationalBetaForCyclic}, and hence in particular
  injective.
\end{proof}
Less immediate is the following result:
\begin{prop}[{\cite{Segal72}}]
  \label{SegalTheorem}
  If the finite group $G$ is a $p$-group, hence if its number of elements is the $n$th power $p^n$ of some prime number $p$
  by some natural number $n \in \mathbb{N}$
  $$
    \vert G \vert
    \;=\;
    p^n
    \,,
  $$
  then over $k = \mathbb{Q}$ the comparison morphism $\xymatrix{A(G) \ar@{->>}[r]^-{\beta} &  R_{\mathbb{Q}}(G)}$ \eqref{TheComparisonMap}
  is surjective.
\end{prop}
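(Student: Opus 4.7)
The plan is to induct on the order $|G| = p^n$. The base case $n = 0$ is trivial, and whenever $G$ is cyclic we are already finished by Prop.~\ref{RationalBetaForCyclic}. For the inductive step, assume surjectivity of $\beta$ holds for all $p$-groups of order strictly less than $|G|$. Since $\mathrm{Im}(\beta) \subseteq R_{\mathbb{Q}}(G)$ is a subgroup and $R_{\mathbb{Q}}(G)$ is spanned additively by its irreducible rational characters, it suffices to show that every irreducible $\chi \in R_{\mathbb{Q}}(G)$ lies in $\mathrm{Im}(\beta)$.

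I would split into two cases according to faithfulness of $\chi$. In the non-faithful case, set $N := \ker(\chi) \neq 1$, so that $\chi$ descends to an irreducible rational character $\bar\chi$ of the smaller $p$-group $G/N$; the inductive hypothesis furnishes a virtual $G/N$-set $\bar S \in A(G/N)$ with $\beta(\bar S) = \bar\chi$, and the pullback of $\bar S$ along the quotient $G \twoheadrightarrow G/N$ is a virtual $G$-set mapping to $\chi$. In the faithful case the goal is to exhibit $\chi$ as $\mathrm{Ind}_H^G(\psi)$ for some proper subgroup $H \subsetneq G$ and some $\psi \in R_{\mathbb{Q}}(H)$; once this is done, the inductive hypothesis produces $T \in A(H)$ with $\beta(T) = \psi$, and applying the induced-$G$-set functor $G \times_H (-)$ transports $T$ to an element of $A(G)$ whose permutation representation is $\mathrm{Ind}_H^G(\mathbb{Q}[T])$, using transitivity of induction together with $\mathbb{Q}[G/K] = \mathrm{Ind}_K^G(\mathbf{1})$.

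The main obstacle is the faithful case, i.e.\ the rational-induction step for non-cyclic $G$. I would approach it via the centre $Z(G)$, which is nontrivial since $G$ is a nontrivial $p$-group. Faithfulness of $\chi$ forces its restriction $\chi|_{Z(G)}$ to have trivial kernel, and a short Galois-orbit argument then forces $Z(G)$ to be cyclic. If $G$ itself is cyclic we are done; otherwise, pick a maximal normal abelian subgroup $A \trianglelefteq G$ (necessarily strictly containing $Z(G)$ when $G$ is non-cyclic, and strictly contained in $G$), and apply Clifford theory to an absolutely irreducible $\bar{\mathbb{Q}}$-summand $\chi_{\bar{\mathbb{Q}}}$ of $\chi \otimes_{\mathbb{Q}} \bar{\mathbb{Q}}$: its restriction $\chi_{\bar{\mathbb{Q}}}|_A$ is a sum of $G$-conjugate linear characters of $A$, and the stabilizer $H$ in $G$ of one such character is a proper subgroup because $A$ is maximal abelian and not central. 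One then obtains $\chi_{\bar{\mathbb{Q}}} = \mathrm{Ind}_H^G(\chi_{\bar{\mathbb{Q}}}|_H)$ and descends the Galois orbit back to $\mathbb{Q}$ to get the desired rational induction. This last reduction is essentially the Ritter--Segal monomiality theorem for $p$-groups, and I anticipate that verifying the descent carefully --- in particular keeping track of the Galois action on characters of $A$ and ensuring the stabilizer is \emph{properly} contained in $G$ rather than all of $G$ --- will be the technically delicate step consuming the bulk of the proof.
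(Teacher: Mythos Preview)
The paper does not prove this proposition; it merely records it with a citation to Segal's original article \cite{Segal72}. There is therefore no ``paper's own proof'' to compare against.

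Your inductive strategy --- inflation for non-faithful rational irreducibles, and induction from a proper subgroup for faithful ones --- is essentially the classical Ritter--Segal argument, and it is correct in outline. One small imprecision: your stated reason that the inertia group $H$ is proper (``because $A$ is maximal abelian and not central'') is not quite the operative mechanism. The clean argument is that a maximal normal abelian subgroup $A$ of a $p$-group satisfies $C_G(A) = A$; if the linear character $\lambda$ of $A$ occurring in $\chi_{\bar{\mathbb{Q}}}|_A$ were $G$-invariant, then (since $\chi_{\bar{\mathbb{Q}}}$ is faithful, hence $\lambda$ is faithful on $A$) one gets $[G,A] \subseteq \ker\lambda = 1$, so $G = C_G(A) = A$, contradicting that $G$ is non-abelian. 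You are right that the Galois descent is the genuinely delicate point: the Galois conjugates of $\chi_{\bar{\mathbb{Q}}}$ are a~priori induced from different (conjugate) stabilizers, and one must arrange that the full Galois orbit is induced from a \emph{single} proper subgroup with a rational virtual character. Segal handles this by a careful choice of the normal subgroup and by tracking the Galois action on linear characters of $A$; filling in this step is indeed where the work lies.
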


The standard representation theory of symmetric groups in terms of Young diagrams and Specht modules
yields the following statement:
\begin{prop}[e.g. {\cite[Section 3]{Dress86}}]
  \label{SymmetricSurjectivityOfBeta}
  Over any ground field $k$ of characteristic zero, and for $G = S_n$ any symmetric group of permutations of $n \in \mathbb{N}$ elements, the comparion map
  $\xymatrix{A(S_n) \ar@{->>}[r] & R_{k}(S_n)}$ \eqref{TheComparisonMap} is surjective.
\end{prop}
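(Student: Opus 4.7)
The plan is to reduce to $k = \mathbb{Q}$ and then exploit the classical fact that the Young permutation modules span the representation ring of $S_n$ over the integers, via a unitriangular change-of-basis. More precisely, I would first observe that every irreducible $\mathbb{Q}$-linear representation of $S_n$ is absolutely irreducible (all irreducibles are defined over $\mathbb{Q}$, the Specht modules $S^\lambda$ indexed by partitions $\lambda \vdash n$), so the base-change map $R_{\mathbb{Q}}(S_n) \to R_k(S_n)$ is an isomorphism for any characteristic-zero field $k$. Since $\beta_k$ factors through $\beta_{\mathbb{Q}}$, it suffices to establish surjectivity of $\beta_{\mathbb{Q}} : A(S_n) \to R_{\mathbb{Q}}(S_n)$.

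Next I would single out the Young subgroups $S_\lambda = S_{\lambda_1} \times \cdots \times S_{\lambda_r} \subset S_n$ attached to each partition $\lambda \vdash n$, and consider the corresponding transitive $S_n$-sets $S_n/S_\lambda \in A(S_n)$. Their images under $\beta$ are the Young permutation modules $M^\lambda = \mathbb{Q}[S_n/S_\lambda]$, which clearly lie in the image of $\beta$. The classical decomposition theorem (Young's rule) states
\[
  M^\lambda \;\cong\; \bigoplus_{\mu \vdash n} K_{\mu \lambda} \, S^\mu,
\]
where $K_{\mu\lambda}$ is the Kostka number counting semistandard Young tableaux of shape $\mu$ and content $\lambda$.

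The crucial combinatorial input is then that the Kostka matrix $(K_{\mu\lambda})_{\mu,\lambda \vdash n}$ is unitriangular with respect to the dominance order on partitions: $K_{\lambda \lambda} = 1$ and $K_{\mu\lambda} = 0$ unless $\mu \trianglerighteq \lambda$. Consequently this matrix is invertible over $\mathbb{Z}$, and each Specht module $S^\mu$ can be written as an integer linear combination of the $M^\lambda$'s. Since the $S^\mu$ form a $\mathbb{Z}$-basis of $R_{\mathbb{Q}}(S_n)$ and each $M^\lambda = \beta(S_n/S_\lambda)$ lies in the image of $\beta$, it follows that $\beta_{\mathbb{Q}}$ is surjective onto $R_{\mathbb{Q}}(S_n)$, and hence $\beta_k$ is surjective onto $R_k(S_n)$ for all fields $k$ of characteristic zero.

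The main step requiring care is the invocation of the unitriangularity of the Kostka matrix with respect to dominance order; everything else is formal bookkeeping, and an alternative packaging of the argument (as in Dress's treatment) realizes the same conclusion via the explicit correspondence between Young's rule and the natural basis of $A(S_n)$ indexed by transitive $S_n$-sets $S_n/S_\lambda$.
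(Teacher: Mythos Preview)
Your proof is correct and follows exactly the approach the paper points to: the paper does not give its own proof but attributes the result to the ``standard representation theory of symmetric groups in terms of Young diagrams and Specht modules'' (citing Dress), and your argument via Young subgroups, the permutation modules $M^\lambda$, and the unitriangularity of the Kostka matrix is precisely that standard argument.
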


For some other classes of finite groups, formulas for the cokernel and kernel of $\beta$ are known;
see, e.g., \cite{BartelDokchitser16}.

\subsection{An algorithm for the image of $\beta$}
\label{TheAlgorithm}


We describe here an algorithm for computing the image and cokernel of $\beta$ \eqref{TheComparisonMap}.
The end result is Theorem \ref{AlgorithmForImageOfBeta} below.
Establishing the algorithm involves only elementary representation
theory (see \cite{Dr71}\cite{tomDieck79}\cite{Be}\cite{Ker} \cite{Robbin06}\cite{Bo}\cite{LP})
and basic monoidal category theory (see \cite{Mc}\cite{Bor})
but seems to be new.

\medskip
Throughout, $G$ is a finite group and $k$ is a field.

\begin{defn}[Category of $G$-sets]
  \label{GSets}
We write $G \mathrm{Set}^{\mathrm{fin}}$ for the category of finite sets equipped with $G$-action, called \emph{$G$-sets}, for short.
\end{defn}
This is a symmetric monoidal category (\cite[vol 2, 6.1]{Bor}) with respect to Cartesian product of $G$-sets, which is given by the plain Cartesian product
of underlying sets, equipped with the diagonal $G$-action.
For example, the underlying set of the group $G$ becomes a $G$-set by the left multiplication action of $G$ on itself.
More generally, for $H \subset G$ any subgroup, the set $G/H$ of cosets is still a $G$-set by the left action of $G$ on itself.
The elements of $G/H$ are equivalence classes of elements $g$ of $G$, often denoted $g H$, for which we will write
$$
  [g] := gH
  \,.
$$
Generally, we use square brackets to indicate the equivalence classes or isomorphism classes. In particular we write
$[G/H]$ for the isomorphism class of the $G$-set $G/H$ as an object of $G \mathrm{Set}$.

\begin{defn}[Category of $k$-linear $G$-representations]
 \label{GReps}
We write $G \mathrm{Rep}^{\mathrm{fin}}_k$ for the category of finite dimensional $k$-linear $G$-representations.
\end{defn}
This is a symmetric monoidal category (\cite[vol 2, 6.1]{Bor}) with respect to the standard tensor product of representations, which we denote simply by ``$\otimes$''.

\begin{defn}[The trivial irrep]
  \label{TheTrivialIrreducibleRepresentation}
  We write
  $$\
    \mathbf{1} \simeq k[\ast] \in G \mathrm{Rep}^{\mathrm{fin}}_k
  $$
  for the trivial 1-dimensional $G$-representation \eqref{trivialRepresentation}, equivalently the permutation representation \eqref{PermutationRepresentation} of the singleton $G$-set.
  This is the \emph{tensor unit} for the tensor monoidal structure on $G \mathrm{Rep}_k$:
For $V \in G \mathrm{Rep}_k$ any representation, the hom-space out of $\mathbf{1}$ into $V$ is the vector space $V^G$ of \emph{$G$-invariants}
in $V$, hence of elements which are fixed by $G$:
\[
  \label{HomOutOfTensorUnitIsFixedSpace}
  V^G \;\simeq\; \mathrm{Hom}(\mathbf{1}, V) \;\in\; \mathrm{Vect}_k
  \,.
\]
\end{defn}
\begin{defn}[The irreducible $G$-sets]
  \label{IrreducibleGSet}
  The action of $G$ on a $G$-set $S$ is called \emph{transitive} if for all pairs of elements $s_1, s_2 \in S$
  there exists a group element that takes them into each other: $g s_2 = g_2$.
\end{defn}
Every transitive $G$-set $S$ is isomorphic to a set of cosets $G/H$, equipped with the canonical $G$-action
induced from the left action of $G$ on itsef, where $H$ is isomorphic to the stabilizer subgroup $\mathrm{Stab}(s) \subset G$ of $s$ in $S$.
Two such $G$-sets of cosets are isomorphic, $G/H_1 \simeq G/H_2$, precisely if $H_1$ and $H_2$ are conjugate to each other, as subgroups of
$G$. If we denote isomorphism classes of $G$-sets by square brackets, and also denote conjugacy classes of subgroups by square brackets, then
this means that
$$
  [G/H_1] = [G/H_2]
  \phantom{AA}
  \Longleftrightarrow
  \phantom{AA}
  [H_1] = [H_2]
  \,.
$$
Consequently, we have the following.
\begin{prop}[Canonical linear basis for Burnside ring]
  \label{CaninicalLinearBasisForBurnsideRing}
Every finite $G$-set is a disjoint union of such transitive $G$-sets (Def. \ref{IrreducibleGSet}). Hence the abelian group underlying the Burnside ring
is the free abelian group on elements $[G/H]$, one for each conjugacy class $[H]$ of subgroups of $H$:
$$
  \xymatrix@R=-4pt{
    \underset{[H] \atop {H \subset G}}{\oplus} \mathbb{Z}[H]
    \ar[r]^-{\simeq}
    &
    A(G)
    \\
    {\phantom{AAA}}\; [H] \ar@{|->}[r] & [G/H]
    \,.
  }
$$
\end{prop}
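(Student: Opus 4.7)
The plan is to decompose any finite $G$-set into orbits and then identify each orbit, up to isomorphism, with a coset space $G/H$ for a well-defined conjugacy class $[H]$. The statement then follows formally from the universal property of Grothendieck completion applied to a free commutative monoid.

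First I would establish the orbit decomposition. Given a finite $G$-set $S$, the relation $s_1 \sim s_2 \iff \exists g \in G : g s_1 = s_2$ is an equivalence relation, and the equivalence classes (orbits) are by construction transitive sub-$G$-sets. Since $S$ is the disjoint union of its orbits as sets, and each orbit is $G$-stable, this exhibits $S$ as an isomorphism $S \simeq \bigsqcup_i T_i$ in $G\mathrm{Set}^{\mathrm{fin}}$, with each $T_i$ transitive. Moreover this decomposition is unique up to permutation of summands, because orbits are intrinsic to $S$.

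Second, I would identify each transitive $G$-set with a coset space. Fixing $s \in T$ with stabilizer $H := \mathrm{Stab}(s) = \{g \in G \,|\, gs = s\}$, the orbit-stabilizer correspondence $gH \mapsto gs$ is a well-defined bijection $G/H \to T$ intertwining the $G$-actions, hence an isomorphism in $G\mathrm{Set}^{\mathrm{fin}}$. A different choice of basepoint $s' = g_0 s$ yields stabilizer $g_0 H g_0^{-1}$, so the conjugacy class $[H]$ is independent of the choice. Conversely, if $\varphi : G/H_1 \xrightarrow{\sim} G/H_2$ is a $G$-equivariant isomorphism, then writing $\varphi([e]) = [g_0]$ and chasing equivariance shows $H_1 \subseteq g_0 H_2 g_0^{-1}$; applying the same argument to $\varphi^{-1}$ gives the reverse inclusion, so $[H_1] = [H_2]$. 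This yields the bijection
\[
  \big\{\,\text{iso classes of transitive finite $G$-sets}\,\big\}
  \;\longleftrightarrow\;
  \big\{\,\text{conjugacy classes $[H]$ of subgroups $H \subseteq G$}\,\big\}.
\]

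Third, I would assemble these pieces. The first two steps show that the commutative monoid $\bigl(G\mathrm{Set}^{\mathrm{fin}}/_\sim,\, \sqcup\bigr)$ is the free commutative monoid on the set of isomorphism classes $[G/H]$, indexed by conjugacy classes $[H]$ of subgroups of $G$. The Burnside ring $A(G)$ is by definition \eqref{BurnsideRing} the Grothendieck group of this monoid; since group completion of a free commutative monoid on a set $X$ produces the free abelian group $\bigoplus_{x \in X} \mathbb{Z}$, we obtain the asserted isomorphism
\[
  \bigoplus_{[H],\, H \subseteq G} \mathbb{Z}\,[H]
  \;\xrightarrow{\;\simeq\;}\;
  A(G),
  \qquad
  [H] \longmapsto [G/H].
\]

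The only subtle step is the second one: verifying that $[G/H_1]=[G/H_2]$ if and only if $H_1$ and $H_2$ are conjugate in $G$. The forward direction (different basepoints yield conjugate stabilizers) is straightforward, while the converse requires the equivariance chase above. Everything else is a routine bookkeeping of orbits and Grothendieck completion.
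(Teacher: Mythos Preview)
Your proof is correct and follows exactly the reasoning the paper has in mind: the paper states (just before the proposition) the orbit--stabilizer identification $T \simeq G/\mathrm{Stab}(s)$ and the conjugacy criterion $[G/H_1]=[G/H_2] \Leftrightarrow [H_1]=[H_2]$, then simply writes ``Consequently, we have the following'' without a separate proof environment. You have supplied the details the paper leaves implicit---the orbit decomposition, the equivariance chase for the conjugacy criterion, and the passage from the free commutative monoid to the free abelian group via Grothendieck completion---so your argument is a fleshed-out version of the same approach rather than a different one.
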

We would like to get a handle on the following object:
\begin{defn}[Multiplicities multiplication table of the Burnside ring]
  \label{MultiplicationTable}
  Let $\big\{ [H_i] \big\}_{i}$ be an indexing of the set of conjugacy classes $[H]$ of subgroups $H \subset G$.

  \vspace{-2mm}
  \begin{enumerate}[{\bf (i)}]
  \item The \emph{structure constants} of the Burnside ring $A(G)$ is the set of natural numbers $\{n_{i j}^\ell\}$ defined by
  \begin{equation}
    \label{BurnsideMultiplicationTable}
    k[G/H_i] \otimes k[G/H_j]
    \;\simeq\;
    \underset{\ell}{\oplus} \, n_{i j}^\ell \, k[G/H_\ell]
    \,.
  \end{equation}

  \vspace{-5mm}
 \item The \emph{total multiplicities table} of the Burnside ring $A(G)$ is the quadratic matrix whose $(i,j)$-entry
  is
  \begin{equation}
    \label{BurnsideMultiplicationMultiplicities}
    M_{i j}
    \;:=\;
    \underset{\ell}{\sum} n_{i j}^\ell
    \,.
  \end{equation}
\end{enumerate}
\end{defn}

\vspace{-2mm}
Before discussing the crucial role of the total multiplicities
\eqref{BurnsideMultiplicationMultiplicities} for our purpose,
(to which we come in Prop. \ref{TableViaHomSpaces} below) we
first record an efficient way of computing them:

\begin{defn}[e.g {\cite[Def. 1.1]{Pfeiffer97}}]
  \label{TableOfMarks}
  Given a finite group $G$, its \emph{table of marks}
  is the square matrix $m$ indexed by the conjugacy classes $[H]$ of
  subgroups $H \subset G$ whose $[H_i], [H_j]$-entry is
  the number of fixed points of the $H_j$-action on $G/H_i$
  $$
    m_{i j}
    \;:=\;
    \left\vert
      \big( G/H_i\big)^{H_j}
    \right\vert
    \;\in\;
    \mathbb{Z}
    \,.
  $$
\end{defn}
\begin{prop}
[Ordering]
  \label{TableOfMarksIsLowerTriangular}
  There exists a linear ordering $\leq$ of the
  set of conjugacy classes of subgroups of $G$
  which extends the inclusion relation of subgroups, in that
  $$
    \big(
      H_i \subset H_j
    \big)
    \;\Rightarrow\;
    \big(
      [H_i] \leq [H_j]
    \big)
    \,.
  $$
  With respect to any such linear ordering, the table of marks $m$
  (Def. \ref{TableOfMarks}) is a lower triangular matrix
  with positive entries on its diagonal, hence in particular
  an invertible matrix.
\end{prop}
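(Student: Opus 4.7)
The plan is to prove the statement in two independent parts. First, I would establish existence of the linear ordering by a standard topological-sort argument: the finite set of conjugacy classes of subgroups of $G$ carries a natural partial order, where $[H] \preceq [K]$ iff some conjugate of $H$ is contained in $K$. Every finite poset admits a linear extension, and this produces the desired ordering. Note that extending literal subgroup inclusion $H_i \subset H_j$ is equivalent, at the level of conjugacy classes, to extending the subconjugation order $\preceq$, since $H^g \subset K$ already forces $[H] = [H^g] \preceq [K]$.

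Next, for the triangularity, I would unpack Def.~\ref{TableOfMarks}: a coset $g H_i$ is fixed by the left $H_j$-action iff $H_j \cdot g H_i = g H_i$, i.e., iff $g^{-1} H_j g \subset H_i$. Therefore
\[
  m_{ij}
  \;=\;
  \left|
    \left\{\, g H_i \in G/H_i \,:\, g^{-1} H_j g \subset H_i \,\right\}
  \right|,
\]
and this can be nonzero only when $H_j$ is subconjugate to $H_i$, i.e., $[H_j] \preceq [H_i]$. In the chosen linear extension this forces $[H_j] \leq [H_i]$; contrapositively, whenever $[H_j] > [H_i]$ one has $m_{ij} = 0$, giving lower triangularity.

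For the positive diagonal, the same computation yields $m_{ii} = |\{\, g H_i \,:\, g^{-1} H_i g \subset H_i \,\}|$. Since $|g^{-1} H_i g| = |H_i|$ is finite, the inclusion is automatically an equality, so these cosets are precisely the cosets of $H_i$ in its normalizer:
\[
  m_{ii}
  \;=\;
  [N_G(H_i) : H_i]
  \;\geq\;
  1,
\]
the inequality holding because $H_i \subset N_G(H_i)$. Invertibility is then immediate, since the determinant of a lower triangular matrix is the product of its diagonal entries, which here is a product of positive integers.

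I do not anticipate any real obstacle: each step is standard in the theory of Burnside rings and of the table of marks. The one point worth flagging is the equivalence, at the level of conjugacy classes, between ``extending literal subgroup inclusion'' and ``extending the subconjugation partial order''; once this is recorded, triangularity is essentially the observation that $(G/H_i)^{H_j}$ is empty outside the subconjugation range, and the diagonal computation is just the standard identification of $H_i$-fixed cosets in $G/H_i$ with $N_G(H_i)/H_i$.
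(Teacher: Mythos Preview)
Your proof is correct and follows essentially the same approach as the paper: extend the subconjugation partial order to a linear order, then observe that $(G/H_i)^{H_j}\neq\varnothing$ forces $H_j$ to be subconjugate to $H_i$, and check the diagonal is positive. Your argument is in fact slightly more detailed, computing the diagonal entry exactly as $[N_G(H_i):H_i]$ where the paper only notes that the identity coset $[e]$ is fixed, and you are more careful than the paper about the distinction between literal inclusion and subconjugation at the level of conjugacy classes.
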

\begin{proof}
  First of all, inclusion of subgroups defines a partial order
  and every partial order extends to a linear order.
  (For our finite ordered set this follows immediately by induction,
  splitting off a minimal element in each step;
  more generally see \cite{Marczewski30}.)
  Then, observe that $H_j$ having any fixed points on $G/H_i$
  means that it is conjugate to a subgroup of the stabilizer group
  of $[e] \in G/H_i$. But the latter is manifestly $H_i$ itself.
  Hence
  $$
    \Big(
      M_{i j}
      =
      \big\vert
        \left(G/H_i\right)^{G_j}
      \big\vert
      \;\gt\;
      0
    \Big)
    \;\Rightarrow\;
    \big(
      [H_j] \leq [H_i]
    \big)
    \,,
  $$
  which says that $m$ is lower triangular.  Finally, it is clear that at least
  $[e] \in G/H_i$ is fixed by $H_i$, hence that the diagonal entries are positive.
\end{proof}

\begin{prop}
[Multiplicities explicitly]
  The Burnside multiplicities $m_{i j}^\ell$ (Def. \ref{MultiplicationTable})
  are given by the following
  algebraic expression in terms of the entries of the table of marks $m$
  (Def. \ref{TableOfMarks}) and its inverse matrix $m^{-1}$ (from Prop. \ref{TableOfMarksIsLowerTriangular}):
  \begin{equation}
    \label{BurnsideMultiplicitiesViaTableOfMarks}
    n_{i j}^\ell
    \;=\;
    \underset{k}{\sum}
    m_{i k }
      \cdot
    m_{j k}
      \cdot
    (m^{-1})^{k \ell}
    \,.
  \end{equation}
\end{prop}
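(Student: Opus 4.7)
The plan is to exploit the fact that, for each subgroup $H \subseteq G$, taking $H$-fixed points defines a ring homomorphism $\phi_H : A(G) \to \mathbb{Z}$, so that the table of marks fully encodes the multiplication of $A(G)$ once $m$ is inverted. First I would verify the multiplicativity of fixed points: for $G$-sets $X, Y$ equipped with the diagonal action on $X \times Y$, one has $(X \times Y)^H = X^H \times Y^H$ and hence $|(X\times Y)^H| = |X^H| \cdot |Y^H|$, while disjoint unions evidently give $|(X \sqcup Y)^H| = |X^H| + |Y^H|$. Consequently, each assignment $\phi_{H_k} \colon [X] \mapsto |X^{H_k}|$ is a ring homomorphism $A(G) \to \mathbb{Z}$, and it sends the basis element $[G/H_i]$ to $m_{i k}$ by the very definition of the table of marks (Def. \ref{TableOfMarks}).

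Applying $\phi_{H_k}$ to both sides of the defining equation \eqref{BurnsideMultiplicationTable} then yields, for each index $k$, the linear identity
$$
m_{i k} \cdot m_{j k}
\;=\;
\phi_{H_k}\!\bigl([G/H_i] \cdot [G/H_j]\bigr)
\;=\;
\sum_\ell n_{i j}^\ell \cdot m_{\ell k}.
$$
This is a system of linear equations expressing the known products of marks in terms of the unknown structure constants, with the table of marks $m$ itself providing the coefficient matrix.

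Finally, since by Prop. \ref{TableOfMarksIsLowerTriangular} the matrix $m$ is invertible, I would multiply both sides on the right by $(m^{-1})^{k \ell'}$ and sum over $k$, obtaining
$$
\sum_k m_{i k} \, m_{j k} \, (m^{-1})^{k \ell'}
\;=\;
\sum_\ell n_{i j}^\ell \sum_k m_{\ell k} \, (m^{-1})^{k \ell'}
\;=\;
\sum_\ell n_{i j}^\ell \, \delta_\ell^{\,\ell'}
\;=\;
n_{i j}^{\ell'},
$$
which is precisely the claimed formula \eqref{BurnsideMultiplicitiesViaTableOfMarks}. The only genuine input is the multiplicativity $(X \times Y)^H = X^H \times Y^H$ of fixed points under the diagonal action; everything else is linear algebra enabled by the invertibility of $m$ already established. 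There is no real obstacle — the formula simply expresses that the Burnside mark homomorphism $\prod_k \phi_{H_k} : A(G) \hookrightarrow \prod_k \mathbb{Z}$ simultaneously diagonalizes the ring structure of $A(G)$, and the change of basis back to the canonical $G/H$-basis (Prop. \ref{CaninicalLinearBasisForBurnsideRing}) is effected by $m^{-1}$.
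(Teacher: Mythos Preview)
Your proposal is correct and follows essentially the same approach as the paper: both derive the key identity $m_{ik}\cdot m_{jk} = \sum_\ell n_{ij}^\ell\, m_{\ell k}$ and then right-multiply by $m^{-1}$. The only cosmetic difference is that you invoke the multiplicativity of fixed points $(X\times Y)^{H_k} = X^{H_k}\times Y^{H_k}$ directly, while the paper first rewrites $m_{ij} = |\mathrm{Hom}_{G\mathrm{Set}}(G/H_j,G/H_i)|$ and then uses that $\mathrm{Hom}$ into a product splits as a product of $\mathrm{Hom}$s --- but since $\mathrm{Hom}_{G\mathrm{Set}}(G/H_k,-)\cong(-)^{H_k}$, these are the same observation in different language.
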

\begin{proof}
  Notice that the entry $m_{i j}$ of the table of marks
  may equivalently be thought of as the cardinality of the
  set of homomorphism from the $G$-set $G/H_j$ to the $G$-set $G/H_i$:
  $$
    m_{i j}
    \;=\;
    \left\vert
      \ \mathrm{Hom}_{G \mathrm{Set}}
      \big(
        G/H_j , G/H_i
      \big)
    \right\vert
    \,.
  $$
  (Because, by transitivity of the action, any such homomorphism
  is determined by its image of $[e] \in G/H_j$ and by $G$-equivariance
  this has to be sent to any $H_j$-fixed point of $G/H_i$.)
  With this we compute as follows:
  $$
  \begin{aligned}
    \underset{\ell}{\sum}
    n_{i j}^\ell \cdot m_{\ell k}
    & =
    \underset{\ell}{\sum}
    n_{i j}^\ell
    \cdot
    \big\vert
      \mathrm{Hom}_{G \mathrm{Set}}
      \big(
        G/H_k
        ,
        G/H_\ell
      \big)
    \big\vert
    \\
    & =
    \Big\vert
      \mathrm{Hom}_{G \mathrm{Set}}
      \big(
        G/H_k
        ,
        \underset{\ell}{\sum}
        n_{i j}^\ell
        \cdot
        G/H_\ell
      \big)
    \Big\vert
    \\
    & =
    \left\vert
      \mathrm{Hom}_{G \mathrm{Set}}
      \big(
        G/H_k,
        \;
        G/H_i \times G/H_j
      \big)
    \right\vert
    \\
    & =
    \left\vert
      \mathrm{Hom}_{G \mathrm{Set}}
      \big(
       G/H_k,
       \;
       G/H_i
      \big)
    \right\vert
    \cdot
    \left\vert
      \mathrm{Hom}_{G \mathrm{Set}}
      \big(
       G/H_k,
       \;
       G/H_j
      \big)
    \right\vert
    \\
    & =
    m_{i k} \cdot m_{j k}
    \,.
  \end{aligned}
$$
Here in the third step we used the defining equation \eqref{BurnsideMultiplicationTable} of the
Burnside multiplicities $n_{i j}^\ell$,
and otherwise we used evident properties of sets of homomorphisms.
Now matrix multiplication of both sides of this
equation with the inverse matrix $m^{-1}$ yields the claimed
relation.
\end{proof}

In order to understand the meaning of the total multiplicities
\eqref{BurnsideMultiplicationMultiplicities}, we consider now some basic facts, all elementary.

\begin{prop}[Self-duality of permutation representations]
  \label{PermutationRepresentationsAreSelfDual}
  If $\mathrm{char}(k) \neq \vert H \vert $, then the permutation representation
  $k[G/H]$ \eqref{PermutationRepresentation} is a dualizable object in the symmetric monoidal representation category $(G \mathrm{Rep}_k, \otimes)$
  (e.g. \cite[vol 2, 6.1]{Bor})
  and is in fact self-dual:
  $$
    k[G/H]^\ast \simeq k[G/H]
    \,.
  $$
\end{prop}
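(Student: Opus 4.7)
The plan is to exhibit an explicit $G$-equivariant isomorphism $\phi : k[G/H] \xrightarrow{\sim} k[G/H]^\ast$ constructed directly from the permutation basis. First, I note that dualizability in $(G\mathrm{Rep}^{\mathrm{fin}}_k, \otimes)$ is automatic: for any finite-dimensional $V$, the dual vector space $V^\ast$ carries the contragredient action $(g \cdot \varphi)(v) := \varphi(g^{-1} v)$, and the standard evaluation and coevaluation maps of the underlying vector spaces intertwine the $G$-action, making $V$ a dualizable object. So the whole content is producing the self-duality isomorphism.

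For this, I work with the canonical $G$-set basis $\{e_{[g]}\}_{[g] \in G/H}$ of $k[G/H]$, on which $G$ acts by $h \cdot e_{[g]} = e_{[hg]}$, and consider the linear map $\phi : e_{[g]} \longmapsto e_{[g]}^\ast$ sending each basis vector to its corresponding dual basis vector. This is plainly a linear isomorphism, so the only thing to check is $G$-equivariance. Unwinding the contragredient action, one computes $(h \cdot e_{[g]}^\ast)(e_{[g']}) = e_{[g]}^\ast(e_{[h^{-1} g']}) = \delta_{[g],[h^{-1} g']} = \delta_{[hg],[g']}$, which identifies $h \cdot e_{[g]}^\ast$ with $e_{[hg]}^\ast = \phi(h \cdot e_{[g]})$. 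Equivalently and more conceptually, the symmetric bilinear form $\langle e_{[g]}, e_{[g']} \rangle := \delta_{[g],[g']}$ on $k[G/H]$ is non-degenerate and $G$-invariant (because $G$ acts by permuting this orthonormal basis), and $\phi$ is simply the isomorphism $V \xrightarrow{\sim} V^\ast$ induced by such a non-degenerate invariant form.

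No real obstacle is anticipated: the statement amounts to the elementary observation that a permutation representation comes equipped with a canonical $G$-invariant inner product given by the Kronecker delta on the permutation basis, which is manifestly preserved because the $G$-action on the basis is by permutations. If desired, one can alternatively cross-check this at the level of characters, noting that $\chi_{k[G/H]}(g) = \#\{xH : x^{-1} g x \in H\}$ satisfies $\chi_{k[G/H]}(g) = \chi_{k[G/H]}(g^{-1})$ (since $H$ is closed under inverses), which under the hypothesis $\mathrm{char}(k) \neq |H|$ ensures semisimplicity of $k[G/H] \simeq \mathrm{Ind}_H^G \mathbf{1}$ via Maschke's theorem and hence gives $k[G/H] \simeq k[G/H]^\ast$ by the character equality; but the direct construction above is more transparent.
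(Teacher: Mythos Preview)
Your proof is correct and takes a genuinely different route from the paper's. The paper exhibits self-duality directly in terms of the monoidal duality data: it writes down a counit $\epsilon([g_1]\otimes[g_2]) = \delta_{[g_1],[g_2]}$ and a unit $\eta(1) = \tfrac{1}{|H|}\sum_{g\in G}[g]\otimes[g]$, then verifies the triangle identity by hand. The factor $\tfrac{1}{|H|}$ appears because the paper sums over $g \in G$ rather than over cosets $[g] \in G/H$, and this is the only place the hypothesis $\mathrm{char}(k)\neq|H|$ enters. Your approach instead separates the two claims: dualizability is automatic for any finite-dimensional representation (via the contragredient action on $V^\ast$ with the standard evaluation/coevaluation), and the self-duality isomorphism is the one induced by the permutation-basis Kronecker form, which is manifestly $G$-invariant and non-degenerate because $G$ acts by permuting an orthonormal basis. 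This buys you something real: your main argument works in arbitrary characteristic, so you have in fact proved slightly more than the paper states. (Had the paper summed over $G/H$ instead of $G$, the factor $\tfrac{1}{|H|}$ would disappear and their argument would likewise be unconditional.) One small caution on your side remark: the hypothesis $\mathrm{char}(k)\neq|H|$ does not by itself guarantee semisimplicity of $k[G/H]$ via Maschke's theorem---that requires $\mathrm{char}(k)\nmid|G|$---but since you correctly flag the character argument as only a secondary cross-check, this does not affect the validity of your main construction.
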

\begin{proof}
  We need to find morphisms
  $$
    \mathbf{1} \overset{\eta}{\longrightarrow} k[G/H] \otimes k[G/H]
    \phantom{AA}\mbox{and}\phantom{AA}
    k[G/H] \otimes k[G/H] \overset{\epsilon}{\longrightarrow} \mathbf{1}
  $$
  in $G \mathrm{Rep}_k$
  that make the following triangle commutes
  (the ``triangle identity'', recalled in  \cref{CategoricalAlgebra}):
  $$
    \xymatrix@R=1em{
      & k[G/H] \otimes k[G/H] \otimes k[G/H]
      \ar[dr]|{ \mathrm{id} \otimes \epsilon }
      \\
      k[G/H]
      \ar[ur]|{ \eta \otimes \mathrm{id} }
      \ar[rr]_-{\mathrm{id}}
      &&
      k[G/H]
    }
  $$
  where we are notationally suppressing the unitors and associators, as usual.
  With $[g] \in G/H \subset k[G/H]$ denoting both the equivalence class of an
  element $g \in G$ as well as the corresponding basis element of $k[G/H]$,
  we claim that the following choice works:
  $$
    \raisebox{20pt}{
    \xymatrix@R=-4pt{
      \mathbf{1} \ar[rr]^-{\eta} && k[G/H] \otimes k[G/H]
      \\
      1 \ar@{|->}[rr] && \tfrac{1}{ \vert H \vert  }  \underset{g \in G}{\sum} [g] \otimes [g] \phantom{AAAA}
    }}
    \phantom{AA} \mbox{and} \phantom{AA}
    \raisebox{20pt}{
    \xymatrix@R=-4pt{
      k[G/H] \otimes k[G/H] \ar[rr]^-{\epsilon} && \mathbf{1}
      \\
      [g_1] \otimes [g_2]
      \ar@{|->}[rr] &&
      \left\{
      {\begin{array}{ccc}
         1 &\vert& [g_1] = [g_2],
         \\
         0 &\vert& \mbox{otherwise}.
      \end{array}}
      \right.
    }}
  $$
  Here the fraction on the left makes sense by assumption on the characteristic of $k$. Also, it is immediate
  that these linear maps do respect the $G$-action and hence are morphisms in $G \mathrm{Rep}_k$.
  Because with this, we check for every $[\tilde g] \in k[G/H]$ that:

 \vspace{-3mm}
  $$
    \xymatrix{
      [\tilde g]
      \ar@{|->}[rr]^-{ \eta \otimes \mathrm{id} }
      &&
      \frac{1}{{\vert H \vert}} \underset{g \in G}{\sum} [g] \otimes [g] \otimes [\tilde g]
      \ar@{|->}[rr]^-{ \mathrm{id} \otimes \epsilon }
      &&
      \underset{ = 1 }{
      \underbrace{
        \frac{1}{{\vert H \vert}} \underset{ {g \in G} \atop { [g] =[\tilde g] } }{\sum}
      }} [\tilde g]
      \ar@{=}[r]
      &
      [\tilde g]\;.
    }
  $$

  \vspace{-.7cm}
\end{proof}
As a direct consequence we obtain:

\begin{prop}[Internal hom between permutation representations]
  \label{InternalHomBetweenPermutationRepresentations}
  If $\mathrm{char}(k) \neq \vert H_1\vert$, then the \emph{internal hom} between the permutation
  representation $k[G/H_1]$ and $k[G/H_2]$ \eqref{PermutationRepresentation}
  exists in $(G \mathrm{Rep}_k,\otimes)$ and is given by the tensor product of representations:
  $$
    \big[ \, k[G/H_1]\,,\, k[G/H_2] \, \big]
    \;\simeq\;
    k[G/H_1] \otimes k[G/H_2]
    \,.
  $$
\end{prop}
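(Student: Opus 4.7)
The plan is to deduce this directly from Proposition \ref{PermutationRepresentationsAreSelfDual} together with a standard fact from monoidal category theory: in any symmetric monoidal category, if an object $V$ is dualizable with dual $V^\ast$, then the internal hom $[V,W]$ exists for all $W$ and is given by $V^\ast \otimes W$. Since the previous proposition provides both the dualizability of $k[G/H_1]$ and a canonical self-duality isomorphism $k[G/H_1]^\ast \simeq k[G/H_1]$, we obtain
\[
  \big[ k[G/H_1], k[G/H_2] \big]
  \;\simeq\;
  k[G/H_1]^\ast \otimes k[G/H_2]
  \;\simeq\;
  k[G/H_1] \otimes k[G/H_2],
\]
with all hypotheses on the characteristic inherited from Prop.~\ref{PermutationRepresentationsAreSelfDual}.

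First, I would recall (or briefly verify) the general fact: given $V$ dualizable with unit $\eta : \mathbf{1} \to V \otimes V^\ast$ and counit $\epsilon : V^\ast \otimes V \to \mathbf{1}$, and given any object $W$, the natural map
\[
  \mathrm{Hom}(U, V^\ast \otimes W)
  \longrightarrow
  \mathrm{Hom}(V \otimes U, W),
  \qquad
  f \longmapsto (\epsilon \otimes \mathrm{id}_W) \circ (\mathrm{id}_V \otimes f),
\]
is a bijection, with inverse constructed from $\eta$ and the triangle identities. This exhibits $V^\ast \otimes W$ as the internal hom $[V, W]$. This is entirely formal and only uses the triangle identities that were already invoked (and explicitly verified) for $V = k[G/H_1]$ in the preceding proof.

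Second, I would substitute $V = k[G/H_1]$ and $W = k[G/H_2]$, invoke Prop.~\ref{PermutationRepresentationsAreSelfDual} to replace $V^\ast$ by $V$, and conclude the stated isomorphism. The condition $\mathrm{char}(k) \neq \vert H_1 \vert$ is needed precisely to form the unit map $\eta$ of Prop.~\ref{PermutationRepresentationsAreSelfDual}, which involves dividing by $\vert H_1 \vert$; no additional hypothesis on $H_2$ is required, since $W$ need not be dualizable for $[V,W]$ to be representable by $V^\ast \otimes W$.

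I do not anticipate a genuine obstacle here: the entire content is the combination of the self-duality established in the previous proposition with the formal consequence that dualizable objects admit internal homs of the form $V^\ast \otimes (-)$. If anything, the only subtlety is being careful that the isomorphism $k[G/H_1]^\ast \simeq k[G/H_1]$ of Prop.~\ref{PermutationRepresentationsAreSelfDual} is an isomorphism in $G\mathrm{Rep}_k$ (i.e. $G$-equivariant), which was already checked in the construction of $\eta$ and $\epsilon$ there, so the resulting isomorphism of internal homs is likewise $G$-equivariant.
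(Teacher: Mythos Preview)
Your proposal is correct and follows essentially the same approach as the paper: invoke the general fact that for a dualizable object $V$ the internal hom $[V,W]$ is given by $V^\ast \otimes W$, then apply the self-duality of $k[G/H_1]$ from Prop.~\ref{PermutationRepresentationsAreSelfDual}. Your additional remarks (the explicit adjunction bijection, and the observation that only $H_1$ enters the characteristic hypothesis) are accurate elaborations but not needed beyond what the paper records.
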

\begin{proof}
  Generally, if duals exist, the internal hom is given by
  $$
    [V_1, V_2] \;\simeq\; V_1^\ast \otimes V_2
    \,.
  $$
  With this, the statement follows by Prop. \ref{PermutationRepresentationsAreSelfDual}.
\end{proof}

\begin{prop}[Trivial irrep in transitive permutation representations]
  \label{TrivialIrreducibleRepresentationInPermutationRepresentation}
  The permutation representation $k[G/H]$ of a transitive $G$-set (Def. \ref{IrreducibleGSet}) contains precisely one direct summand of the
  trivial 1-dimensional representation (Def. \ref{TheTrivialIrreducibleRepresentation}):
  \vspace{-2mm}
  $$
    \mathrm{dim}_k
    \mathrm{Hom}\big(  \mathbf{1}, k[G/H] \big)
    \;=\;
    \mathrm{dim}_k
    \big( k[G/H] \big)^G
    \;=\; 1
    \,.
  $$
\end{prop}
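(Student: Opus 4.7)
The plan is to reduce both equalities to a direct computation of the space of $G$-fixed vectors inside $k[G/H]$, using the transitivity assumption on the $G$-set $G/H$.

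First I would note that the leftmost equality,
$$
\dim_k \mathrm{Hom}\bigl(\mathbf{1}, k[G/H]\bigr) \;=\; \dim_k \bigl(k[G/H]\bigr)^G,
$$
is immediate from the isomorphism \eqref{HomOutOfTensorUnitIsFixedSpace} recorded in Def. \ref{TheTrivialIrreducibleRepresentation}: maps out of the tensor unit $\mathbf{1}$ into any $V \in G\mathrm{Rep}_k$ compute the $G$-invariants of $V$. So it suffices to prove that $\bigl(k[G/H]\bigr)^G$ is one-dimensional.

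Next I would expand an arbitrary vector $v \in k[G/H]$ in the canonical basis as $v = \sum_{[g] \in G/H} v_{[g]} \cdot [g]$ with $v_{[g]} \in k$, and spell out the action of $G$ from \eqref{PermutationRepresentation}. The condition $\gamma \cdot v = v$ for all $\gamma \in G$ then translates, after re-indexing, into the system of equations $v_{[\gamma^{-1} g]} = v_{[g]}$ for all $\gamma, g$. In other words, the function $[g] \mapsto v_{[g]}$ must be constant on $G$-orbits of $G/H$.

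The final step is to invoke the hypothesis that $G/H$ is transitive (Def. \ref{IrreducibleGSet}): there is only a single $G$-orbit in $G/H$, so the constancy condition forces $v_{[g]}$ to be one and the same scalar $c \in k$ for every coset. Hence every $G$-invariant vector is of the form $c \cdot \sum_{[g] \in G/H} [g]$, which identifies $\bigl(k[G/H]\bigr)^G$ as the one-dimensional $k$-subspace spanned by $\sum_{[g]} [g]$, yielding the claimed dimension $1$. There is really no substantive obstacle here; the only thing to be careful about is to record explicitly that the transitivity hypothesis is what collapses a potentially higher-dimensional fixed space (one dimension per orbit) down to a single dimension.
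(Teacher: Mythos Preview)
Your proposal is correct and follows essentially the same approach as the paper: expand a general vector in the coset basis, observe that $G$-invariance forces the coefficients to be constant on $G$-orbits, and use transitivity to conclude the fixed space is the line spanned by $\sum_{[g]}[g]$. The only cosmetic difference is that the paper phrases the key step contrapositively (if two coefficients differ, transitivity produces a $g$ witnessing non-invariance), while you argue directly via constancy on orbits; you are also slightly more explicit in invoking \eqref{HomOutOfTensorUnitIsFixedSpace} for the first equality.
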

\begin{proof}
  By definition, every element $v \in k [G/H]$ is a formal linear combination of cosets $[g]$:
  $$
    v \;=\;
    \underset{[g] \in G/H}{\sum} v_{[g]} \, [g]
  $$
  for coefficients $v_{[g]} \in k$. If there exists $[g_1], [g_2] \in G/H$ such that $v_{[g_1]} \neq v_{[g_2]}$
  then, by transitivity of the $G$-action, there exist $g \in G$ with $g [g_1] = [g_2]$. But since the $[g]$
  constitute a basis of $k[G/H]$, this implies that $g v \neq v$, hence that $v$ is not $G$-invariant.
  Therefore, the only $G$-invariant vectors $v \in k[G/H]$ are those all whose coefficients $v_{[g]}$ agree.
  These clearly form a 1-dimensional subspace.
\end{proof}

As a corollary we obtain:

\begin{prop}[Multiplication table via invariants and via hom-spaces]
 \label{TableViaHomSpaces}
 The entries $M_{i j} := {\sum}_{\ell}\, n_{i j }^\ell $ in the table of multiplication
 multiplicities in the Burnside ring (Def. \ref{MultiplicationTable}) are equivalently

\begin{enumerate}[{\bf (i)}]
\vspace{-3mm}
\item the dimensions of the subspaces of $G$-invariants in the Burnside products;

\vspace{-3mm}
\item  the dimensions of the external homs
 of the two given basis elements;
\end{enumerate}
 $$
   M_{i j}
   \;:=\;
   \underset{\ell}{\sum} n_{i j}^\ell
   \;=\;
   \mathrm{dim}_k \big( k[G/H_i] \otimes k[G/H_j]  \big)^G
   \;=\;
   \mathrm{dim}_k \mathrm{Hom}\big( k[G/H_i] \,,\, k[G/H_j]  \big)
   \,.
 $$

\end{prop}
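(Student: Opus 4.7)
My plan is to prove the two equalities in turn, each as a short chain of identifications built from the propositions immediately preceding.

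For the first equality, I would start by applying the defining decomposition \eqref{BurnsideMultiplicationTable},
\[
  k[G/H_i] \otimes k[G/H_j] \;\simeq\; \bigoplus_\ell n_{ij}^\ell \cdot k[G/H_\ell],
\]
and then take $G$-invariants on both sides. Since the fixed-point functor $(-)^G$ is $k$-linear and commutes with finite direct sums, this yields
\[
  \bigl(k[G/H_i] \otimes k[G/H_j]\bigr)^G \;\simeq\; \bigoplus_\ell n_{ij}^\ell \cdot \bigl(k[G/H_\ell]\bigr)^G.
\]
Now Prop.~\ref{TrivialIrreducibleRepresentationInPermutationRepresentation} gives $\dim_k\bigl(k[G/H_\ell]\bigr)^G = 1$ for each transitive permutation representation, so taking dimensions produces exactly $\sum_\ell n_{ij}^\ell = M_{ij}$.

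For the second equality, the strategy is to identify the external hom with the $G$-invariants of the internal hom, and then use self-duality to rewrite the internal hom as a tensor product. Explicitly, by the standard tensor--hom adjunction in the symmetric monoidal closed category $(G\mathrm{Rep}_k,\otimes)$ together with \eqref{HomOutOfTensorUnitIsFixedSpace},
\[
  \mathrm{Hom}\bigl(k[G/H_i],k[G/H_j]\bigr)
  \;\simeq\;
  \mathrm{Hom}\bigl(\mathbf{1},\,[k[G/H_i],k[G/H_j]]\bigr)
  \;\simeq\;
  \bigl[k[G/H_i],k[G/H_j]\bigr]^G.
\]
Then Prop.~\ref{InternalHomBetweenPermutationRepresentations} (itself a consequence of the self-duality from Prop.~\ref{PermutationRepresentationsAreSelfDual}) rewrites the internal hom as $k[G/H_i] \otimes k[G/H_j]$, giving
\[
  \mathrm{Hom}\bigl(k[G/H_i],k[G/H_j]\bigr)
  \;\simeq\;
  \bigl(k[G/H_i]\otimes k[G/H_j]\bigr)^G.
\]
Taking $\dim_k$ chains the two equalities together.

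There is no real obstacle here; the only thing to watch is the characteristic hypothesis on $k$ needed to invoke Props.~\ref{PermutationRepresentationsAreSelfDual} and \ref{InternalHomBetweenPermutationRepresentations} (one needs $\mathrm{char}(k) \nmid |H_i|$ for self-duality of $k[G/H_i]$), which should either be inherited as a standing assumption or noted explicitly. Assuming this, the proof is just assembling the preceding lemmas in the indicated order.
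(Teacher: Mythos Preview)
Your proof is correct and follows essentially the same route as the paper: both arguments decompose the tensor product via \eqref{BurnsideMultiplicationTable}, apply Prop.~\ref{TrivialIrreducibleRepresentationInPermutationRepresentation} termwise for the first equality, and for the second equality pass from the external hom to the $G$-invariants of the internal hom and then invoke Prop.~\ref{InternalHomBetweenPermutationRepresentations}. Your remark about the characteristic hypothesis is a fair caveat that the paper leaves implicit.
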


\begin{proof}
  The first equality follows from Prop. \ref{TrivialIrreducibleRepresentationInPermutationRepresentation} applied to
  the Definition \ref{MultiplicationTable} of the structure constants, which gives the following isomorphism
  $$
    \begin{aligned}
       \big( k[G/H_i] \otimes k[G/H_j]  \big)^G
       & :=
       \Big( \underset{\ell}{\oplus} n_{i j}^\ell \, k[G/H_\ell]  \Big)^G
       \\
       & \simeq \underset{\ell}{\oplus} n_{i j}^\ell \big( k[G/H_\ell]  \big)^G
       \\
       & \simeq \underset{\ell}{\oplus} n_{i j}^\ell \, k\;.
    \end{aligned}
  $$
 The second equality comes from the following sequence of isomorphisms
  $$
    \begin{aligned}
      \mathrm{Hom}\Big( k[G/H_1] , k[G/H_2] \Big)
      & \simeq \mathrm{Hom}\Big( \mathbf{1}, \big[ k[G/H_1] , k[G/H_2] \big] \Big)
      \\
      & \simeq \Big(  \big[ k[G/H_1] , k[G/H_2] \big)  \Big)^G
      \\
      & \simeq \big( k[G/H_1] \otimes k[G/H_2]  \big)^G
      \,.
    \end{aligned}
  $$
  Here the first equivalence expresses a general relation between external and internal homs,
  via the tensor unit (Def. \ref{TheTrivialIrreducibleRepresentation}),
  the second is from \eqref{HomOutOfTensorUnitIsFixedSpace} and the last one is Prop. \ref{InternalHomBetweenPermutationRepresentations}.
\end{proof}

Now it is useful to relate this to Schur's Lemma. For this purpose it turns out to be convenient to
think in terms of the following inner product.
\begin{defn}[Inner product on Burnside ring]
  \label{BurnsideInnerProduct}
  For $V_1, V_2 \in G \mathrm{Rep}_k$, write
  $$
    \big\langle V_1, V_2  \big\rangle
    \;:=\;
    \mathrm{dim}_k \mathrm{Hom}\big( V_1, V_2  \big)
    \;\in\;
    \mathbb{N}
  $$
  for the dimension of the vector space of representation homomorphism between them.
  By $\mathbb{Z}$-linearity this extends to a $\mathbb{Z}$-valued pairing on the Burnside ring:
  \vspace{-2mm}
  $$
    \xymatrix{
      R_k(G) \times R_k(G)
      \ar[rr]^-{ \langle -,-\rangle }
      &&
      \mathbb{Z}
    }.
  $$
\end{defn}
In terms of this pairing, \emph{Schur's Lemma} says the following:
\begin{lemma}[Schur's Lemma]
  \label{Schur}
  The pairing
  $\langle -,-\rangle$ from Def. \ref{BurnsideInnerProduct}
  is a symmetric and $\mathbb{Z}$-bilinear inner product on the abelian group underlying the representation ring
  $R_k(G)$.
  With respect to this inner product, the set of isomorphism classes $\rho_i$ of irreducible representations of $G$
  \begin{enumerate}[{\bf (i)}]
  \vspace{-2mm}
    \item is always an \emph{orthogonal basis}, where each basis element has positive norm-square;
    \vspace{-2mm}
    \item
     is even an \emph{orthonormal basis} if the field $k$ is algebraically closed.
  \end{enumerate}
\end{lemma}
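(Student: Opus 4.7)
The plan is to deduce all three properties—$\mathbb{Z}$-bilinearity, symmetry, and the (ortho)normality of the irreducible basis—from the semisimplicity of $G\mathrm{Rep}^{\mathrm{fin}}_k$ (Maschke's theorem, which applies whenever $\mathrm{char}(k) \nmid |G|$, as is implicitly assumed throughout) together with the classical endomorphism form of Schur's Lemma. Bilinearity is essentially free: the bifunctor $\mathrm{Hom}_G(-,-)$ sends direct sums to direct sums in each argument, so $\dim_k\mathrm{Hom}_G(V_1\oplus V_1',W) = \dim_k\mathrm{Hom}_G(V_1,W) + \dim_k\mathrm{Hom}_G(V_1',W)$, and similarly in the second slot. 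This additivity in $G\mathrm{Rep}^{\mathrm{fin}}_k$ extends uniquely and $\mathbb{Z}$-linearly to the Grothendieck ring $R_k(G)$ of virtual representations.

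For the orthogonality claim (i), I would invoke Schur's Lemma in its usual categorical form: any morphism between two simple objects is either zero or an isomorphism. This immediately gives
\[
\mathrm{Hom}_G(\rho_i,\rho_j) \;=\; 0 \quad \text{for } [\rho_i] \neq [\rho_j],
\]
so $\langle \rho_i,\rho_j\rangle = 0$ for $i\neq j$. On the diagonal, $\mathrm{End}_G(\rho_i)$ is a finite-dimensional division $k$-algebra, in particular nonzero, so $\langle\rho_i,\rho_i\rangle = \dim_k\mathrm{End}_G(\rho_i) \in \mathbb{Z}_{>0}$. The fact that $\{[\rho_i]\}$ is a $\mathbb{Z}$-basis of $R_k(G)$ is the Krull--Schmidt decomposition in the semisimple setting: every $V \in G\mathrm{Rep}^{\mathrm{fin}}_k$ has a unique (up to isomorphism) decomposition $V \simeq \bigoplus_i m_i(V)\rho_i$.

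Symmetry of the pairing then follows by direct computation from this decomposition: if $W \simeq \bigoplus_i m_i(W)\rho_i$, then
\[
\langle V,W\rangle \;=\; \sum_i m_i(V)\, m_i(W)\, \dim_k\mathrm{End}_G(\rho_i) \;=\; \langle W,V\rangle,
\]
which is manifestly symmetric and $\mathbb{Z}$-bilinear. For part (ii), when $k$ is algebraically closed the classical strengthening of Schur's Lemma identifies every finite-dimensional $k$-division algebra with $k$ itself, so $\mathrm{End}_G(\rho_i) \simeq k$ and hence $\langle\rho_i,\rho_i\rangle = 1$, promoting the orthogonal basis to an orthonormal one.

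The only subtle point worth flagging is the division-algebra step: over a non-algebraically-closed $k$ (e.g.\ $k = \mathbb{R}$, relevant to $\mathrm{KO}_G$ in the physics application), $\mathrm{End}_G(\rho_i)$ can genuinely be $\mathbb{R}$, $\mathbb{C}$, or $\mathbb{H}$, so the diagonal entries need not all equal $1$. This is what forces the weaker statement (i) over general $k$ and is the only place where the argument would break down without the algebraic-closure hypothesis; everything else is formal bookkeeping in a semisimple additive category.
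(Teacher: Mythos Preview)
Your proof is correct and is the standard argument; the paper does not actually prove this lemma, stating it as a classical fact without proof. Since there is no proof in the paper to compare against, it suffices to note that your argument via semisimplicity, the endomorphism form of Schur's Lemma, and the Krull--Schmidt decomposition is complete and appropriate for the characteristic-zero setting the paper works in.
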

Using this perspective, we amplify the following:
\begin{remark}
[Matching of multiplicities and independence]
  \label{MultiplicationTableIsInnerproductTable}
  The statement of Prop. \ref{TableViaHomSpaces} is that,
  in terms of the inner product $\langle -,-\rangle$ from Def. \ref{BurnsideInnerProduct}, the multiplicities multiplication table coincides
  with the table of inner products of Burnside-basis elements:
  \begin{equation}
    \label{BurnsideMultiplicitiesAsInnerProducts}
    M_{i j}
    \;:=\;
    \underset{\ell}{\sum} n_{i j}^\ell
    \;=\;
    \big\langle k[G/H_i]\,,\, k[G/H_j]  \big\rangle
    \,.
  \end{equation}
  Also notice that Prop. \ref{TableViaHomSpaces} implies that the $k$-linear dimension of $k$-linear hom-spaces between
  $k$-linear permutation  representations of transitive $G$-sets is \emph{independent} of the ground field $k$:
  $$
    \left\langle
      k[G/H_i] \,,\, k[G/H_j]
    \right\rangle
    \;:=\;
    \mathrm{dim}_k
    \mathrm{Hom}
    \left(
      k[G/H_i] \,,\, k[G/H_j]
    \right)
    \;=\;
    M_{i j}
  $$
  since the multiplication multiplicities matrix $M$ of the Burnside ring is manifestly independent of $k$.
\end{remark}

\medskip

We now use this to discuss explicit matrix representations of $\beta$.

\begin{defn}[Upper triangular form of the Burnside multiplicities matrix]
 \label{HermiteNormalFormOfBurnsideMultiplication}
For $G$ a finite group, let
\begin{equation}
  \label{Hermite}
  H := U \cdot M \;\;\in \mathrm{Mat}_{N \times N}(\mathbb{N})
\end{equation}
be an integral upper triangular form (e.g. \cite[p. 3,4]{GilbertPathria90})
of the Burnside multiplication multiplicities matrix (Def. \ref{MultiplicationTable}),
hence with
$
  U \in \mathrm{GL}(N,\mathbb{Z})
$
an invertible matrix whose left-multiplication implements row reduction on $M$
($N$ the number of conjugacy classes of subgroups of $G$).
Write
\begin{equation}
  \label{ZeroRowsDeleted}
  \tilde H := \tilde U \cdot H
\end{equation}
for the result of deleting the zero-rows from \eqref{Hermite}.
Write
\begin{equation}
  \label{transformed}
  V_i
  \;:=\;
  \underset{\ell}{\sum}
  \tilde U_{i}^\ell
  \cdot
  [G/H_\ell]
  \;\in\;
  A(G)
\end{equation}
for the corresponding permutation representations and
write
\begin{equation}
  \label{SchurDimension}
  d_i
    \;:=\;
  \big\langle
    V_i, V_j
  \big\rangle
\end{equation}
for their norm-square with respect to the inner product (Def. \ref{BurnsideInnerProduct}).
\end{defn}

\begin{prop}
[Matrix for $\beta$]
   \label{MatrixBeta}
   Consider the upper triangular form $\tilde H$ of the Burnside multiplication matrix, Def. \ref{HermiteNormalFormOfBurnsideMultiplication}.
 \item {\bf (i)}  Then the corresponding permutation representations $V_i \in R_k(G)$ \eqref{transformed} are orthogonal, in that their inner products (Def. \ref{BurnsideInnerProduct})
   satisfy
   \begin{equation}
     \label{Orthogonal}
      \big\langle V_i \,,\, V_j\big\rangle
      \;=\;
      \delta_{i j} \, d_i
      \phantom{AAA}
      d_i \in \mathbb{N}
      \,;
   \end{equation}
   and they linearly span the image of $\beta$:
   $
     \left\langle
       V_i
     \right\rangle_i
     \;\simeq\;
     \mathrm{im}(\beta) \subset R_k(G)
       $.
\item {\bf (ii)}  Specifically, the matrix that represents $\beta$ with respect to the basis of
   the $G/H_j \in A(G)$ and the basis of $V_i \in \mathrm{Im}(\beta) \subset R_k(G)$  is
       $$
     \beta_{i j} \;=\; \tfrac{1}{d_i} \tilde H_{i j}
     \,.
   $$
   (In particular this means that the $i$th row $\tilde H_{i \bullet}$ of $\tilde H$ is divisible by $d_i$.)
   \item {\bf (iii)}
Hence for every subgroup $H_j \subset G$, the image of $\beta$ on the corresponding $G$-set $G/H_j$ is
   the following linear combination of the representations $V_i$ \eqref{transformed}, from Def. \ref{HermiteNormalFormOfBurnsideMultiplication}:
   \begin{equation}
     \label{BetaInBases}
     \beta\big( G/H_j  \big)
     \;=\;
     \underset{i}{\sum} \tfrac{1}{d_i} \tilde H_{i j} \, V_i
     \,.
   \end{equation}
\end{prop}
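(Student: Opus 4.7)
The plan rests on the identification, from Remark~\ref{MultiplicationTableIsInnerproductTable}, of the Burnside multiplicity matrix $M$ with the Gram matrix of the hom-pairing on permutation representations: $M_{\ell j} = \langle k[G/H_\ell], k[G/H_j]\rangle$. Unfolding $\tilde H = \tilde U M$ using $\mathbb{Z}$-bilinearity of $\langle -,-\rangle$ and the definition $V_i = \sum_\ell \tilde U_i^\ell [G/H_\ell]$ rephrases the entries of $\tilde H$ and of the Gram matrix of the $V_i$'s entirely in inner-product terms:
\[
  \tilde H_{ij} \;=\; \big\langle V_i,\, k[G/H_j]\big\rangle
  \qquad \text{and} \qquad
  \langle V_i, V_j\rangle \;=\; (\tilde U M \tilde U^T)_{ij} \;=\; (\tilde H \tilde U^T)_{ij}.
\]

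The crux of part (i) is to show that this Gram matrix of the $V_i$'s is diagonal. I would combine two observations: (a) the matrix $\tilde U M \tilde U^T$ is symmetric, inheriting symmetry from $M$; (b) the forward Gaussian elimination used to triangularize the symmetric $M$---whose strictly positive diagonal $M_{ii} \geq 1$ makes the elimination proceed without pivoting---produces a lower-triangular $U \in \mathrm{GL}(N,\mathbb{Z})$, and deleting the rows corresponding to zero rows of $UM$ preserves this echelon structure, so $\tilde H \tilde U^T$ is upper triangular. A symmetric and upper-triangular matrix is diagonal, which yields $\langle V_i, V_j\rangle = \delta_{ij}\, d_i$. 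The $d_i$ are then positive integers: expanding $V_i$ in the orthogonal basis of irreducibles supplied by Schur's Lemma~(Lemma~\ref{Schur}) writes $\langle V_i, V_i\rangle$ as a positive sum of squared integer multiplicities weighted by positive norm-squares of irreducibles. The claim that the $V_i$ span $\mathrm{Im}(\beta)$ follows because $U \in \mathrm{GL}(N,\mathbb{Z})$ is invertible over $\mathbb{Z}$ and the deleted rows correspond precisely to zero rows of $UM$, i.e.\ to $\mathbb{Z}$-linear relations $\sum_\ell U_i^\ell \, k[G/H_\ell] = 0$ inside $R_k(G)$, whose removal does not shrink the image. The main obstacle is the orthogonality step: it depends on the triangularization algorithm being compatible with the symmetric Gram structure of $M$---effectively an integer $\mathrm{LDL}^T$-type reduction rather than an arbitrary Hermite normal form.

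Parts (ii) and (iii) then follow formally from orthogonality. Expanding $k[G/H_j] = \sum_i c_{ij} V_i$ in the $\mathbb{Z}$-sublattice $\mathrm{Im}(\beta) \subseteq R_k(G)$ and pairing with $V_i$ gives
\[
  c_{ij}\, d_i \;=\; \langle V_i, k[G/H_j]\rangle \;=\; \tilde H_{ij},
\]
so $\beta_{ij} = c_{ij} = \tfrac{1}{d_i}\,\tilde H_{ij}$, which is (ii); the divisibility $d_i \mid \tilde H_{ij}$ is forced by integrality of the coordinates $c_{ij}$ (both $k[G/H_j]$ and the $V_i$ lie in the same $\mathbb{Z}$-lattice $\mathrm{Im}(\beta)$). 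Statement (iii) is just the expansion of $\beta(G/H_j) = k[G/H_j]$ in the orthogonal basis $\{V_i\}$, i.e.\ a direct rewriting of (ii).
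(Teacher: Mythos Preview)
Your proposal is correct and follows the paper's approach: parts (ii) and (iii) are line-for-line the paper's computation $\tilde H_{ij} = \langle V_i, k[G/H_j]\rangle = d_i\,\beta_{ij}$, while for part (i) you unpack the content of the paper's citation to \cite{PursellTrimble91}---that row-reducing a Gram matrix yields orthogonal vectors---via the direct observation that $\tilde U M \tilde U^T$ is simultaneously symmetric and upper-triangular, hence diagonal. Your flagged caveat that this requires $U$ to be lower-triangular (forward elimination without pivoting) is well-taken; the paper leaves this implicit in Definition~\ref{HermiteNormalFormOfBurnsideMultiplication} but confirms it in the remark following Theorem~\ref{AlgorithmForImageOfBeta}.
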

\begin{proof}
The first statement follows from \cite{PursellTrimble91}: By \eqref{BurnsideMultiplicitiesAsInnerProducts} we have that $M = A^T \cdot A$ is a positive semi-definite matrix of inner products, where $A$ is the
matrix whose columns are the permutation representations $k[G/H]$
expanded in terms of the irreps of $G$.
By \cite[top of p. 5]{PursellTrimble91} this implies that the same row reduction which turns $M$ into upper-triangular form takes $A$ to
a matrix whose non-vanishing columns $V_i$ constitute an orthogonal basis
of the linear span of the $k[G/H]$.

The remaining statement just spells this out by immediate computation:
$$
  \begin{aligned}
    \tilde H_{i j}
    & =
    \underset{\ell}{\sum} \tilde U_i^\ell M_{\ell j}
    \\
    & =
    \underset{\ell}{\sum} \tilde U_i^\ell \left\langle k[G/H_\ell]  , k[G/H_j] \right\rangle
    \\
    & =
     \Big\langle \underset{\ell}{\sum} \tilde U_i^\ell  k[G/H_\ell]  , k[G/H_j] \Big\rangle
    \\
    & =
    \big\langle V_i, k[G/H_j] \big\rangle
    \\
    & =
    \left\langle V_i, \beta(G/H_j)\right\rangle
    \\
    & = d_i \beta_{i j}
    \,.
  \end{aligned}
$$
Here the first line is by Def. \ref{HermiteNormalFormOfBurnsideMultiplication},
and in the second step we used Prop. \ref{InternalHomBetweenPermutationRepresentations}
in the inner product notation from Def. \ref{BurnsideInnerProduct} (as in Remark \ref{MultiplicationTableIsInnerproductTable}).
In the third step
we used the linearity of the inner product from Prop. \ref{Schur},
in the fourth step we inserted the definition of $V_i$ \eqref{transformed}. In the fifth step we just identified
$\beta(H_j) = k[G/H_j]$, for emphasis. Finally we used the assumption \eqref{Orthogonal} to identify the
coefficient $\beta_{ij}$ of $V_i$ in $\beta(H_j)$.
\end{proof}

With a linear basis for the image of $\beta$ thus in hand, it just remains to express it in a form that
may directly be compared to standard classifications available from the linear representation theory of finite groups.
For completeness, recall:
\begin{defn}[Characters]
  \label{Character}
  For $G$ a finite group and $k$ a field, a function from the underlying set of $G$ to $k$
  is called a \emph{class function} if it is constant on conjugacy classes of $G$, hence if it
  factors as
  $$
    \xymatrix@R=1em{
      G \ar@{->>}[dr] \ar[rr] && k\;.
      \\
      & \mathrm{ConjCl}(G) \ar[ur]_-{\phi}
    }
  $$
  Hence class functions form a $k$-vector space of dimension the number of conjugacy classes in  $G$:
  \begin{equation}
    \label{SpaceOfClassFunctions}
    k^{{\vert \mathrm{ConjCl}(G)\vert}}.
  \end{equation}
  For $V \in R_k(G)$ a representation, the map sending any $g \in G$ to its \emph{trace}, when regarded
  as a linear map $V \overset{g}{\to} V$ via this representation, is a class function (by basic properties of the trace),
  called the \emph{character} $\chi_V$ of $V$:
  $$
    \xymatrix@R=-2pt{
      \mathrm{ConjCl}(G)
      \ar[r]
      &
      k
      \\
      [g] \ar@{|->}[r] & \mathrm{tr}_V(g)
    }
  $$
\end{defn}
The following example is immediate but important:
\begin{example}[Character of permutation representation]
  \label{CharacterOfPermutationRepresentation}
  For $S \in A(G)$ a finite $G$-set, the character (Def. \ref{Character}) of its
  permutation representation $k[S]$ \eqref{PermutationRepresentation} is the function that sends $g$ to the
  number of elements in $S$ that are fixed by the given action of $g$:
  $$
    \chi_{ k[S] }
    \;:\;
    [g]
    \longmapsto \vert S^g\vert
    \in \mathbb{N} \subset k
    \,.
  $$
\end{example}

The relevance of characters is that, in characteristic zero, they already completely characterize linear representations,
while being more manifestly tractable:
\begin{prop}[e.g. {\cite[Theorem 2.2.5]{tomDieck09}}]
  \label{CharactersInCharacteristicZeroRecognizeRepresentations}
  If the field $k$ is of characteristic zero,
  then the map that sends a $k$-linear $G$-representation
  to its character (Def. \ref{Character}) is an injection of the $k$-vector space
  of isomorphism classes of finite-dimensional $G$-representations into the vector space \eqref{SpaceOfClassFunctions} of class functions (Def. \ref{Character})
  $$
    \xymatrix@R=-2pt{
      G \mathrm{Rep}_k/_{\sim}
      \; \ar@{^{(}->}[rr]
      &&
      k^{{}^{ {\vert \mathrm{ConjCl}(G)\vert} }}
      \\
      V \ar@{|->}[rr] && \chi_V
    }
  $$
  If $k$ is, in addition, a splitting field for $G$ (notably if $k =\mathbb{C}$ is the complex numbers), then this map is even an isomorphism.
\end{prop}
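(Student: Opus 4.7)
The strategy is to reduce the statement to Schur's orthogonality of irreducible characters, invoking Maschke's theorem for the injectivity part and Artin--Wedderburn for the surjectivity clause.

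By Maschke's theorem, in characteristic zero every finite-dimensional $V \in G\mathrm{Rep}_k$ decomposes uniquely up to isomorphism as $V \simeq \bigoplus_i m_i\, \rho_i$, with $\rho_i$ running over the distinct isomorphism classes of irreducibles and $m_i \in \mathbb{N}$. Additivity of the trace gives $\chi_V = \sum_i m_i\, \chi_{\rho_i}$, so injectivity of $V \mapsto \chi_V$ on isomorphism classes is equivalent to $\mathbb{Z}$-linear independence of the family $\{\chi_{\rho_i}\}$ in the space of class functions. To establish this independence I would equip the space of class functions with the standard inner product $\langle \phi,\psi\rangle = \tfrac{1}{|G|}\sum_{g\in G} \phi(g^{-1})\psi(g)$, verify directly that $\langle \chi_{\rho_i},\chi_{\rho_j}\rangle = \dim_k \mathrm{Hom}_G(\rho_i,\rho_j)$, and then apply Schur's Lemma (Lemma \ref{Schur} above) to identify the right-hand side with $\delta_{ij}\cdot \dim_k \mathrm{End}_G(\rho_i)$ --- a positive integer on the diagonal, zero off-diagonal. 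The resulting $k$-linear (hence a fortiori $\mathbb{Z}$-linear) orthogonality of irreducible characters settles the injectivity half of the theorem.

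For the surjectivity clause when $k$ is a splitting field I would count dimensions. On one side, the space of class functions has $k$-dimension equal to $|\mathrm{ConjCl}(G)|$, which coincides with $\dim_k Z(kG)$ because the conjugacy-class sums form a $k$-basis of the center of the group algebra. On the other side, Artin--Wedderburn applied to the semisimple algebra $kG$ over the splitting field $k$ yields $kG \simeq \prod_i \mathrm{Mat}_{\dim_k \rho_i}(k)$, whence $Z(kG) \simeq \prod_i k$ has $k$-dimension equal to the number of distinct irreducibles. The injection of irreducible characters into class functions is therefore a map between vector spaces of equal finite dimension, hence also surjective. The main step requiring care --- though not at all novel --- is Artin--Wedderburn in the splitting-field case; everything preceding is formal given Maschke and Schur. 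One mild subtlety worth checking is that the orthogonality identity $\langle \chi_{\rho_i},\chi_{\rho_j}\rangle = \dim_k \mathrm{Hom}_G(\rho_i,\rho_j)$, classically stated over $\mathbb{C}$, transfers verbatim to an arbitrary characteristic-zero $k$; this works because all relevant traces take values in a finite extension of $\mathbb{Q}$ sitting inside $k$, so the standard $\mathbb{C}$-arguments descend unchanged.
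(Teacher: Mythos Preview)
Your proof is correct and follows the standard textbook route (Maschke, Schur orthogonality, Artin--Wedderburn dimension count). The paper itself does not supply a proof of this proposition: it is stated with a citation to \cite[Theorem 2.2.5]{tomDieck09} and used as black-box background, so there is no in-paper argument to compare against.

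One minor comment: your closing remark about the orthogonality identity $\langle \chi_{\rho_i},\chi_{\rho_j}\rangle = \dim_k \mathrm{Hom}_G(\rho_i,\rho_j)$ ``descending from $\mathbb{C}$'' is unnecessarily roundabout. The identity holds directly over any $k$ with $\mathrm{char}(k) \nmid |G|$ by the averaging-projector argument: the operator $\tfrac{1}{|G|}\sum_{g} g$ on $\rho_i^\ast \otimes \rho_j$ is idempotent with image the $G$-invariants $\mathrm{Hom}_G(\rho_i,\rho_j)$, and its trace computes both sides. No passage through $\mathbb{C}$ or field extensions is needed.
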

As usual, it is convenient to organize this data in \emph{character tables}. In order to make our list of examples
in  \cref{ImageOfBetaExamples} be unambiguously intelligible, we briefly dwell on the notation for character
tables.
\begin{defn}[Character table]
  \label{CharacterTable}
  For $(W_i \in R_k(G))_{i \in \{1, \cdots, \}} $ a tuple of (possibly virtual) $k$-linear representations of a finite group $G$, their   \emph{character table} is the $n \times {\vert \mathrm{ConjCl}(G)\vert}$-matrix with values in $k$
   whose $(i,j)$-th entry is the value $\chi_{{}_{W_i}}(g_j)$ of the character $\chi_{{}_{W_i}}$ (Def. \ref{Character})
   on any element $g_j$ of the $j$th conjugacy class $[g_j] \in \mathrm{ConjCl}(G)$.
\end{defn}

\begin{example}[Irreducible character table over $\mathbb{C}$]
  \label{IrreducibleCharacterTableOverC}
By Prop. \ref{CharactersInCharacteristicZeroRecognizeRepresentations} the characters of irreducible representations over $k = \mathbb{C}$ the complex numbers
form a linear basis of the representation ring $R_{\mathbb{C}}(G)$. We will denote these irreducible representations by $(\rho_i \in R_{\mathbb{C}})(G)$
and will display the corresponding character table (Def. \ref{CharacterTable}) as follows (conjucagy classes being labeled by the order of their elements):
\begin{center}
\begin{tabular}{cc|ccccc}
 \multicolumn{2}{c}{   } &  \multicolumn{5}{c}{\bf conjugacy class }
 \\
 \multicolumn{2}{c|}{  } & 1 & 3 & 4A & 4B & $\cdots$
 \\
 \cline{2-7}
 \multirow{5}{*}{ \raisebox{-50pt}{\bf \begin{rotate}{90} irred. repr. \end{rotate}} \hspace{-20pt} }
 &
 $\rho_1$ & $\cdot$ & $\cdot$ & $\cdot$ & $\cdot$ &
 \\
 &
 $\rho_2$ & $\cdot$ & $\chi_{{}_{\rho_2}}(g_2)$ & $\chi_{{}_{\rho_2}}(g_3)$ & $\cdot$ &
 \\
 &
 $\rho_3$ & $\cdot$ & $\chi_{{}_{\rho_3}}(g_2)$ & $\chi_{{}_{\rho_3}}(g_3)$ & $\cdot$ &
 \\
 &
 $\rho_4$ & $\cdot$ & $\cdot$ & $\cdot$ & $\cdot$ &
 \\
 &
 $\vdots$ &  &  &  &  & $\ddots$
\end{tabular}
\end{center}
The character tables of irreducible representations over the complex numbers,
for many finite groups of small order, have been tabulated in the literature,
for instance in \cite{DokGroupNames}.
\end{example}

\begin{example}[Irreducible character table over $\mathbb{R}$ ]
 \label{IrreducibleRealCharacterTable}
For $k \subset \mathbb{C}$ a subfield, under the ring homomorphism of ``extension of scalars''
$$
  \xymatrix{
    R_{k}(G)
    \ar[rr]^-{ (-)_{k} \mathbb{C} }
    &&
    R_{\mathbb{C}}(G)
  },
$$
the values of characters to not change. Hence if $k$ is of characteristic zero, $\mathbb{Q} \subset k \subset \mathbb{C}$,
then, by Prop. \ref{CharactersInCharacteristicZeroRecognizeRepresentations}, we may equivalently express the character
of any linear representation $W \in R_k(G)$ over $k$ after tensoring it with $\mathbb{C}$. This is a linear combination of the complex irreducible characters $\chi_{{}_{\rho_i}}$ from above
$$
  W \otimes_k \mathbb{C} \;=\; \underset{i}{\sum}\, \underset{ \in \mathbb{N} }{\underbrace{w_i}} \cdot \rho_i\;,
\qquad \qquad
  \chi_{{}_{(W \otimes_k \mathbb{C}})}  \;=\; \underset{i}{\sum} w_i \cdot \chi_{{}_{\rho_i}}
  \,.
$$
Therefore, when $\mathbb{Q} \subset k \subset \mathbb{C}$ and for $(\cdots, W \in R_k(G), \cdots)$ a tuple
of $k$-linear representations, we may and will express the corresponding character table as a table of linear
combinations of the irreducible complex characters:
\begin{center}
\begin{tabular}{cc|ccccc}
 \multicolumn{2}{c}{   } &  \multicolumn{5}{c}{ \bf conjugacy class }
 \\
 \multicolumn{2}{c|}{  } & 1 & 3 & 4A & 4B & $\cdots$
 \\
 \cline{2-7}
 \multirow{5}{*}{ \raisebox{-50pt}{\bf \begin{rotate}{90} irred. repr. \end{rotate}} \hspace{-20pt} }
 &
 $\cdot$ & $\cdot$ & $\cdot$ & $\cdot$ & $\cdot$ &
 \\
 &
 $W \otimes_k \mathbb{C} = \underset{i}{\sum} w_{i} \cdot \rho_i $ & $\cdot$ & $\underset{i}{\sum} w_i \cdot \chi_{{}_{\rho_i}}(g_2)$ & $\underset{i}{\sum} w_i\cdot \chi_{{}_{\rho_i}}(g_3)$ & $\cdot$ &
 \\
 $\cdot$ & $\cdot$ & $\cdot$ & $\cdot$ & $\cdot$ & $\cdot$
 \\
 &
 $\vdots$ &  &  &  &  & $\ddots$
\end{tabular}
\end{center}
In this fashion we will in particular state the irreducible character tables over $k =\mathbb{R}$ the real numbers,
which may again be found in the literature for many finite groups of small order.
\end{example}
Specifically, the complex character tables available in the literature (e.g. \cite{DokGroupNames}) list the \emph{type} of the
corresponding complex representation, from which the character table of irreducible representations
over the real numbers may be extracted (or conversely, as in \cite{MonRepresentations}), via the following basic fact:

\begin{prop}[e.g. {\cite[p. 4]{Robbin06}}]
  Let $G$ be a finite group, and consider the complexification map on the representation ring
  $R_{\mathbb{R}}(G) \xrightarrow{ (-) \otimes_{\mathbb{R}} \mathbb{C} } R_{\mathbb{C}}(G)$.
  Then every irreducible complex representation $V \in R_{\mathbb{C}}(G)$
  is of exactly one of the following three \emph{types}, depending on how it arises as a direct summand
  of an irreducible real representation $W \in R_{\mathbb{R}}(G)$:
  $$
    W \otimes_{\mathbb{R}} \mathbb{C}
    \;\simeq\;
    \left\{
    \begin{array}{cccccccc}
      V\phantom{ \oplus V^\ast} &\vert& \mbox{ {\it real type} } &\mbox{/}& \mbox{{\it orthogonal}  }
      \\
      V \oplus V^\ast &\vert& \mbox{{\it complex type}}
      \\
      V \oplus V\phantom{^{\ast}} &\vert& \mbox{{\it quaternionic type}} &\mbox{/}& \mbox{ \it symplectic }
    \end{array}
    \right.
  $$
\end{prop}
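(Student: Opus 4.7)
The plan is to argue through a Schur-lemma analysis of the real endomorphism algebra of the underlying real representation $V_{\mathbb{R}}$ of an irreducible complex $G$-representation $V$, invoking Frobenius's classification of finite-dimensional real division algebras to split into the three cases.

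First I would compute the real endomorphism algebra of $V_{\mathbb{R}}$. By the adjunction between restriction of scalars $(-)_{\mathbb{R}}$ and complexification $(-)\otimes_{\mathbb{R}}\mathbb{C}$, together with the standard identification $V_{\mathbb{R}}\otimes_{\mathbb{R}}\mathbb{C} \simeq V \oplus V^{\ast}$, one obtains
\[
  \mathrm{End}_{\mathbb{R}G}(V_{\mathbb{R}})
  \;\cong\;
  \mathrm{Hom}_{\mathbb{C}G}\bigl(V_{\mathbb{R}}\otimes_{\mathbb{R}}\mathbb{C},\, V\bigr)
  \;\cong\;
  \mathrm{Hom}_{\mathbb{C}G}(V, V)
  \,\oplus\,
  \mathrm{Hom}_{\mathbb{C}G}(V^{\ast}, V).
\]
Schur's Lemma (Lemma~\ref{Schur}) over $\mathbb{C}$ then gives $\dim_{\mathbb{R}} \mathrm{End}_{\mathbb{R}G}(V_{\mathbb{R}}) = 2$ precisely when $V \not\simeq V^{\ast}$ and $=4$ precisely when $V \simeq V^{\ast}$.

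Next I would run the case analysis. If $V \not\simeq V^{\ast}$, then $\mathrm{End}_{\mathbb{R}G}(V_{\mathbb{R}}) \cong \mathbb{C}$, so Frobenius's theorem forces $V_{\mathbb{R}}$ to be irreducible as a real representation; setting $W := V_{\mathbb{R}}$ yields $W \otimes_{\mathbb{R}}\mathbb{C} \simeq V \oplus V^{\ast}$, the \emph{complex} case. If instead $V \simeq V^{\ast}$, fix a non-zero $G$-invariant bilinear form $b \colon V \otimes V \to \mathbb{C}$, unique up to scalar by Schur, and write $b = b_{+} + b_{-}$ as its symmetric and antisymmetric parts; $G$-invariance together with uniqueness forces one of $b_{\pm}$ to vanish. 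Using any $G$-invariant Hermitian inner product on $V$ (obtained by averaging), define a conjugate-linear $G$-equivariant map $J \colon V \to V$ by $\langle Ju, v\rangle = b(u,v)$. A direct calculation gives $J^{2} = +1$ when $b$ is symmetric and $J^{2} = -1$ when $b$ is antisymmetric. In the former, the real form $W := V^{J}$ is irreducible real with $W \otimes_{\mathbb{R}}\mathbb{C} \simeq V$ (\emph{real/orthogonal} case); in the latter no real form exists, $V_{\mathbb{R}}$ is irreducible with $\mathrm{End}_{\mathbb{R}G}(V_{\mathbb{R}}) \cong \mathbb{H}$, and then $V_{\mathbb{R}}\otimes_{\mathbb{R}}\mathbb{C} \simeq V \oplus V$ since $V \simeq V^{\ast}$ (\emph{quaternionic/symplectic} case).

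Mutual exclusivity of the three types is then automatic from the trichotomy $\mathrm{End}_{\mathbb{R}G}(W) \in \{\mathbb{R}, \mathbb{C}, \mathbb{H}\}$, equivalently from the Frobenius--Schur indicator $\tfrac{1}{\vert G\vert}\sum_{g\in G}\chi_{V}(g^{2}) \in \{+1, 0, -1\}$. The main technical point will be verifying that the conjugate-linear $J$ constructed above actually satisfies $J^{2} = \pm 1$ with the sign matching the parity of $b$, and that in the quaternionic case $V_{\mathbb{R}}$ really fails to decompose as a sum of two copies of a real irreducible; everything else reduces to bookkeeping with the restriction/complexification adjunction and Schur's lemma.
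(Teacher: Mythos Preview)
Your proof sketch is correct and follows the standard Schur-lemma/Frobenius route for this classical result. However, note that the paper does not actually prove this proposition at all: it is stated as a known fact with a citation to \cite[p.~4]{Robbin06} and immediately used. There is therefore no ``paper's own proof'' to compare against; your argument simply fills in what the paper leaves to the reference.
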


In this fashion we may now identify the image of $\beta$ via the character table of its basis elements:
\begin{prop}[Character table of linear basis of image of $\beta$]
  \label{CharacterTableForImageOfBeta}
  The character (Def. \ref{Character}) of a basis element $V_i$ \eqref{transformed} of the image of $\beta$ (Prop. \ref{BetaInBases})
  is the class function given by
  \begin{equation}
    \label{CharacterTableOfLinearBasisOfBetaImage}
    \chi_{ {}_{V_i} }
    \;:\;
    [g]
    \longmapsto
    \underset{\ell}{\sum}
      \tilde U_i^\ell
      \cdot
      \vert (G/H_\ell)^g\vert
    \,,
  \end{equation}

  \vspace{-3mm}
\noindent  where on the right we have the sum over conjugacy classes $H_\ell$
of subgroups of $G$ of the product of the entries of the
  base change matrix from \eqref{ZeroRowsDeleted} with the number of elements
  in $G/H_\ell$ that are fixed by the action of $g$.
\end{prop}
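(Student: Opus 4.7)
The plan is to observe that this proposition is an essentially immediate consequence of two ingredients already in hand: the \emph{definition} of $V_i$ as an explicit $\mathbb{Z}$-linear combination of $G$-sets $G/H_\ell$, and the description of the character of a permutation representation in Example \ref{CharacterOfPermutationRepresentation}.

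First I would unpack the definition. By \eqref{transformed}, the element $V_i \in A(G)$ is by construction the virtual $G$-set
\[
  V_i \;=\; \sum_\ell \tilde U_i^\ell \,[G/H_\ell],
\]
so its image under $\beta$ is the virtual permutation representation $\sum_\ell \tilde U_i^\ell \, k[G/H_\ell] \in R_k(G)$. Since the character construction $V \mapsto \chi_V$ is additive on direct sums and hence $\mathbb{Z}$-linear on virtual representations (it is the composite of the ring map $R_k(G)\to k^{|\mathrm{ConjCl}(G)|}$ with evaluation on a conjugacy class, cf.\ Prop.\ \ref{CharactersInCharacteristicZeroRecognizeRepresentations}), we obtain
\[
  \chi_{V_i}([g]) \;=\; \sum_\ell \tilde U_i^\ell \,\chi_{k[G/H_\ell]}([g]).
\]

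Next I would apply Example \ref{CharacterOfPermutationRepresentation} summand-by-summand: for each transitive basis $G$-set $G/H_\ell$, its permutation representation has character $\chi_{k[G/H_\ell]}([g]) = |(G/H_\ell)^g|$, the number of cosets fixed by $g$. Substituting yields exactly the claimed formula \eqref{CharacterTableOfLinearBasisOfBetaImage}.

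There is no real obstacle here beyond verifying that $V_i$ makes sense as claimed: the coefficients $\tilde U_i^\ell$ are integers because they are entries of the integral row-reduction matrix $\tilde U$ produced in Def.\ \ref{HermiteNormalFormOfBurnsideMultiplication}, and the class-function character $\chi_{V_i}$ is well-defined on $[g]$ because each $|(G/H_\ell)^g|$ depends only on the conjugacy class of $g$ (a fixed point of $g$ on $G/H_\ell$ is sent to a fixed point of $hgh^{-1}$ by the $G$-action of $h$, so the cardinalities agree). This completes the proof.
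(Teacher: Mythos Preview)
Your proof is correct and takes essentially the same approach as the paper: the paper's proof is the single line ``This follows by Example \ref{CharacterOfPermutationRepresentation},'' and you have simply unpacked that line by invoking the $\mathbb{Z}$-linearity of characters on virtual representations and applying the example to each summand $k[G/H_\ell]$.
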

\begin{proof}
  This follows by Example \ref{CharacterOfPermutationRepresentation}.
\end{proof}
Hence, in conclusion, we have the following.
\begin{prop}[Recognizing surjectivity of $\beta$]
  \label{RecognizingSurjectivityOfBeta}
  For $k$ of characteristic zero, let $(\rho_i \in R_k(G))$ be the irreducible $k$-linear representations, spanning
  the representation ring $R_k(G)$. Then the comparison morphism \eqref{TheComparisonMap} is surjective
  precisely if the corresponding tuple of characters $\chi_{{}_{\rho_i}}$ (Def. \ref{Character}) is related to the
  set of characters ${\chi}_{{}_{V_i}}$ in Def. \ref{CharacterTableForImageOfBeta} by an invertible integer matrix
  $$
    \mbox{$\beta$ is surjective over $k$}
    \;\;\;
    \Longleftrightarrow
    \;\;\;
      \chi_{{}_{\rho_i}}
    \;=\;
    \underset{j}{\sum}
    T_i^j
    \cdot
    \chi_{{}_{V_j}}\;,
    \;\;\;
    T \in \mathrm{GL}(N,\mathbb{Z})
    \,.
  $$
\end{prop}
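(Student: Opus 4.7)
The plan is to reduce the question of surjectivity of $\beta$ on the level of lattices to an invertibility check on an integer change-of-basis matrix, via the character map.

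First I would note that both $A(G)$ and $R_k(G)$ are free abelian groups of finite rank, and by Proposition \ref{MatrixBeta} the tuple $(V_i)$ constructed in Definition \ref{HermiteNormalFormOfBurnsideMultiplication} is a $\mathbb{Z}$-linear basis of the sub-lattice $\mathrm{im}(\beta) \subset R_k(G)$. On the other hand, by the Krull--Schmidt theorem, the irreducible $k$-representations $(\rho_j)$ form a $\mathbb{Z}$-linear basis of $R_k(G)$ itself. Hence surjectivity of $\beta$ is equivalent to the equality of lattices $\langle V_i\rangle_i = \langle \rho_j\rangle_j$ inside $R_k(G)$.

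Next, since $k$ has characteristic zero, Proposition \ref{CharactersInCharacteristicZeroRecognizeRepresentations} gives that the character map $W \mapsto \chi_{{}_{W}}$ is injective on $R_k(G)$, so any relation of representations is equivalent to the same relation among characters. In particular, expanding each $V_i$ in the irreducible basis yields unique integers $T_i^j \in \mathbb{Z}$ such that
\[
  V_i \;=\; \underset{j}{\sum} T_i^j \cdot \rho_j
  \qquad \Longleftrightarrow \qquad
  \chi_{{}_{V_i}} \;=\; \underset{j}{\sum} T_i^j \cdot \chi_{{}_{\rho_j}}\,,
\]
with $T = (T_i^j)$ an integer matrix.

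Finally I would observe that surjectivity forces the ranks to agree, so $T$ must be an $N \times N$ square matrix. A square integer matrix expressing one $\mathbb{Z}$-basis of a free abelian group in terms of another is invertible over $\mathbb{Z}$ iff the two bases generate the same lattice, i.e.\ iff $\det(T) = \pm 1$, i.e.\ iff $T \in \mathrm{GL}(N,\mathbb{Z})$. Combining these steps, $\beta$ is surjective precisely when the character expansion matrix $T$ lies in $\mathrm{GL}(N,\mathbb{Z})$, which is the claim.

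The only step that requires any care is the implicit matching of ranks: if the $V_i$ are fewer than the $\rho_j$ then $\beta$ simply cannot be surjective, and in that degenerate case the stated criterion is vacuously false because no square $T$ exists. This matching is, however, automatic once one assumes surjectivity; and conversely, existence of $T \in \mathrm{GL}(N,\mathbb{Z})$ already presupposes that both index sets have the same cardinality. The rest is the standard dictionary between representations and their characters in characteristic zero, together with Proposition \ref{MatrixBeta}, which has done the real work of producing an orthogonal integer basis of $\mathrm{im}(\beta)$.
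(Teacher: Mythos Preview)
Your proof is correct and follows essentially the same route as the paper's: both use Proposition~\ref{MatrixBeta} to identify the $V_i$ as a $\mathbb{Z}$-basis of $\mathrm{im}(\beta)$, the irreducibles $\rho_j$ as a $\mathbb{Z}$-basis of $R_k(G)$, and then pass to characters via Proposition~\ref{CharactersInCharacteristicZeroRecognizeRepresentations} to reduce surjectivity to the change-of-basis matrix lying in $\mathrm{GL}(N,\mathbb{Z})$. The only cosmetic difference is that you expand the $V_i$ in terms of the $\rho_j$, whereas the paper's statement is phrased with the $\rho_i$ expanded in terms of the $V_j$; since $\mathrm{GL}(N,\mathbb{Z})$ is closed under inversion this is immaterial, and your handling of the rank-matching caveat is slightly more explicit than the paper's.
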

\vspace{-4mm}
\begin{proof}
  Since, by construction, the $V_j$ \eqref{transformed} are linearly independent  and span the image of $\beta$ (Prop. \ref{MatrixBeta})
  and since the $\rho_i$ span $R_k(G)$, the number of the $V_j$ is smaller or equal to the number of $\rho_i$, hence
  the number must be equal if $\beta$ is surjective. This means that for surjectivity there must be an invertible
  integer matrix relating the $(V_j)$ to the $(\rho_i)$. But by Prop. \ref{CharactersInCharacteristicZeroRecognizeRepresentations} this
  is the case precisely if there is such a matrix relating the characters of these representations.
\end{proof}

This concludes our algorithmic description of the image of $\beta$. To summarize, the algorithm proceeds as follows. Here
\begin{itemize}
\vspace{-3mm}
\item $N := {\vert \mathrm{ConjCl}(G)\vert}$ is the number of conjugacy classes of $G$;
\vspace{-3mm}
\item $r := \mathrm{rank}(\mathrm{image}(\beta))$ is the rank of the image of $\beta$ (the number of $V_j$);
\vspace{-3mm}
\item $n$ is the number of isomorphism classes of irreducible representations $\rho_i$.
\end{itemize}

\begin{theorem}[{\bf Algorithm for the cokernel of $\beta$}]
\label{AlgorithmForImageOfBeta}

\noindent Let $G$ be a finite group and $k$ a field of characteristic zero.

\vspace{-.3cm}
\begin{enumerate}[{\bf (1)}]
\item Extract from standard literature:
\vspace{-2mm}
\begin{itemize}
  \item
    the \emph{character table of irreducible $k$-linear representations} (Examples \ref{IrreducibleCharacterTableOverC}, \ref{IrreducibleRealCharacterTable})
    $$
      \big(
        \chi_{{}_{\rho_i}}([g])
      \big)
      \;\;\;
      \in
      \mathrm{Mat}_{n,N}\left( k \right)
      \,.
    $$
\end{itemize}
\vspace{-5mm}
\item Compute:
\vspace{-3mm}
\begin{enumerate}[{\bf (a)}]
  \item the \emph{multiplication table} $\big( n^\ell_{ i j}\big)$ \eqref{BurnsideMultiplicationTable} of the Burnside ring,
      efficiently so via \eqref{BurnsideMultiplicitiesViaTableOfMarks};
  \item the resulting \emph{table of total multiplication multiplicities} $\left(M_{i j}\right) := \
  \big( \underset{\ell}{\sum} n_{i j}^\ell \big)$ \eqref{BurnsideMultiplicationMultiplicities};
  \vspace{-2mm}
  \item its upper triangular form $H := U \cdot M$ \eqref{Hermite};
  \item the result $\tilde H = \tilde U \cdot M$ \eqref{ZeroRowsDeleted} of deleting its zero-rows;
  \item the \emph{character table of the resulting linear basis for the image of $\beta$} \eqref{CharacterTableOfLinearBasisOfBetaImage}
    $$
      \Big(
        \chi_{{}_{V_i}}([g])
        \;=\;
        \underset{\ell}{\sum}
        \tilde U_i^\ell
        \cdot
        \vert (G/H_\ell)^g\vert
      \Big)
      \;\;\in\;
      \mathrm{Mat}_{r, N}(\mathbb{N})
      \subset
      \mathrm{Mat}_{r, N}\left( k \right).
    $$
\end{enumerate}

\vspace{-5mm}
\item Read off the quotient of the lattice spanned by the vectors $\chi_{{}_{\rho_i}}$ by that spanned by the
vectors $\chi_{{}_{V_j}}$:
$$
  \mathrm{coker}(\beta_k)
  \;:=\;
  \frac{ \mathbb{Z}[\chi_{\rho_i} ]_{i = 1}^n }{ \mathbb{Z}[ \chi_{V_j} ]_{j =1}^{N} }
  \,.
$$
\end{enumerate}
\end{theorem}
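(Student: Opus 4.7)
The statement is a procedural claim: that the listed sequence of computations correctly produces $\mathrm{coker}(\beta_k)$. My plan is to prove correctness step by step, assembling the structural propositions already established in \cref{GeneralFacts} and \cref{TheAlgorithm}, and to verify that each intermediate quantity is well-defined and computable from the previous one. Since all the hard inputs are already in place, the proof will mostly be a matter of threading them together and checking that the output is genuinely a lattice quotient in the sense claimed.

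First I would justify step (2a--2b). The point is that the total multiplicities matrix $M$ is accessible from finite-group data: formula \eqref{BurnsideMultiplicitiesViaTableOfMarks} expresses the Burnside structure constants $n_{ij}^\ell$ as a bilinear expression in the table of marks and its inverse, the latter existing because Prop. \ref{TableOfMarksIsLowerTriangular} provides a linear ordering under which the table of marks is lower triangular with positive diagonal entries. Summing over $\ell$ yields $M_{ij}$, and by Prop. \ref{TableViaHomSpaces} this entry also equals $\langle k[G/H_i], k[G/H_j]\rangle$ in the inner product of Def. \ref{BurnsideInnerProduct}. Thus $M$ is exactly the Gram matrix of the canonical generating set $\{k[G/H_i]\}_i$ of $\mathrm{image}(\beta)$.

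Next I would handle steps (2c--2e). By Prop. \ref{MatrixBeta}, the upper-triangular form $\tilde H = \tilde U \cdot M$ obtained by integer row-reduction encodes an orthogonal basis $V_i := \sum_\ell \tilde U_i^\ell \, [G/H_\ell]$ of $\mathrm{image}(\beta) \subset R_k(G)$, and its non-zero rows index precisely $r = \mathrm{rank}(\mathrm{image}(\beta))$ basis elements. To compute their characters, Example \ref{CharacterOfPermutationRepresentation} gives $\chi_{k[G/H_\ell]}([g]) = |(G/H_\ell)^g|$, and $\mathbb{Z}$-linearity in each $V_i$ then produces exactly the formula \eqref{CharacterTableOfLinearBasisOfBetaImage}, which is fully explicit in terms of $\tilde U$ and the fixed-point counts tabulated from the table of marks.

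Finally, for step (3), Prop. \ref{CharactersInCharacteristicZeroRecognizeRepresentations} ensures that in characteristic zero the character map embeds $R_k(G)$ injectively into the space of class functions, so the $\mathbb{Z}$-span of $\{\chi_{\rho_i}\}_{i=1}^n$ is identified with $R_k(G)$ and the $\mathbb{Z}$-span of $\{\chi_{V_j}\}_{j=1}^r$ is identified with $\mathrm{image}(\beta)$; their quotient inside $k^{|\mathrm{ConjCl}(G)|}$ therefore equals $\mathrm{coker}(\beta_k)$, as claimed. The main obstacle I anticipate is bookkeeping at the integral level: I must confirm that the base-change $\tilde U$ used to reduce $M$ is realized over $\mathbb{Z}$ (not merely over $\mathbb{Q}$), since otherwise the $V_i$ would fail to be honest integer combinations of the Burnside basis and the final lattice quotient would lose its arithmetic meaning. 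This is precisely what Hermite normal form provides, and is the content invoked in Def. \ref{HermiteNormalFormOfBurnsideMultiplication}; verifying that this suffices to make $\{\chi_{V_j}\}$ span a full-rank sublattice of $\mathbb{Z}[\chi_{\rho_i}]$ is the only substantive checking required beyond citation of the earlier results.
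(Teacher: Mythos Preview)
Your proposal is correct and follows essentially the same route as the paper: the theorem in the paper is presented as a summary of the preceding development in \cref{TheAlgorithm}, and your argument threads together exactly the propositions that constitute that development (Prop.~\ref{TableOfMarksIsLowerTriangular} and \eqref{BurnsideMultiplicitiesViaTableOfMarks} for step (2a--b), Prop.~\ref{TableViaHomSpaces} and Prop.~\ref{MatrixBeta} for steps (2c--e), and Prop.~\ref{CharactersInCharacteristicZeroRecognizeRepresentations} for step (3)). Your flagged concern about integrality of $\tilde U$ is apt and is handled, as you note, by the requirement $U \in \mathrm{GL}(N,\mathbb{Z})$ in Def.~\ref{HermiteNormalFormOfBurnsideMultiplication}; the one detail you might make more explicit is why the rows deleted in passing from $H$ to $\tilde H$ correspond to elements of $\ker(\beta)$ (so that the surviving $V_i$ still generate $\mathrm{image}(\beta)$ over $\mathbb{Z}$), which follows from positive-definiteness of the inner product via Schur's Lemma~\ref{Schur}.
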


\begin{remark}[Simplification in examples]
{\bf (i)} In all examples that we compute in \cref{ImageOfBetaExamples},
upper triangular form of the multiplicities matrix in the third step of the algorithm
of Theorem \ref{AlgorithmForImageOfBeta} is achieved by the most straightforward
row reduction, where, incrementally in the row number, a suitable integer multiple
of each row is subtracted from all those beneath it.
\item {\bf (ii)} This is remarkable, since,
in general, row reduction over the integers needs more and more intricate steps
than this; see e.g. \cite[p. 3--4]{GilbertPathria90}. That this happens is due to the
fact that $M$ is a positive semidefinite matrix, as explained in
\cite{PursellTrimble91}.
\item {\bf (iii)} Furthermore, in each case the resulting rows
$V_i$ happen to be actual representations, as opposed to virtual representations.
This makes our algorithm very efficient, and makes it easy to read off the image of $\beta$ in each example.
It seems clear that this particularly nice behavior of row reduction on the Burnside multiplicities matrix is due to
a very special property of the latter. It would be interesting to understand this phenomenon theoretically.
\end{remark}

\section{The image of $\beta$ -- Examples}
 \label{ImageOfBetaExamples}

Given a finite group $G$ and its irreducible characters over a given field $k$ of
characteristic zero, Theorem \ref{AlgorithmForImageOfBeta} provides an effective algorithm for identifying the image of
$A(G) \xrightarrow{\;\beta\;} R_k(G)$ \eqref{TheComparisonMap} and checking whether $\beta$ is surjective.
We have implemented this algorithm in {\tt Python},
available as an ancillary file.
Here we spell out various example computations. In summary, we obtain the result shown in Theorem \ref{CokernelOfBetaInVariousExamples}.
In particular, the \colorbox{lightgray}{shaded entries}
show that over the real numbers $\beta$ has vanishing
cokernel/is surjective onto the ring of integer characters
(i.e.  onto non-irrational characters, by Prop. \ref{RationalCharactersAreInteger}).
%
%

\begin{theorem}[Cokernel of $\beta$ for binary Platonic groups]
  \label{CokernelOfBetaInVariousExamples}
  The following table lists the cokernels
  \vspace{-2mm}
  $$
    \mathrm{coker}(\beta_k)
    \;:=\;
    \frac{R_k(G)}{\mathrm{image}(\beta_k)}
    \;,
    \phantom{AAA}
    \mathrm{coker}(\beta^{\mathrm{int}}_k)
    \;:=\;
    \frac{R^{\mathrm{int}}_k(G)}{\mathrm{image}(\beta_k)}
  $$
  of the permutation-representation morphism
  $A(G) \xrightarrow{\beta_k} R_k(G)$
  \eqref{TheComparisonMap}
  and its corestriction to the integer-valued character ring (Prop. \ref{CorestrictionToIntegerCharacters}, Remark \ref{FactorizationsOfBeta}),
  for finite subgroups of $SU(2)$
  in the D- and E-series (via Prop. \ref{ADEGroups})
  and some relatives, over ground fields $k \in \{\mathbb{Q}, \mathbb{R}, \mathbb{C}\}$:

\medskip

\hspace{-1cm}
{\small
\begin{tabular}{cc|ccc|ccc||lc}
 \multicolumn{2}{r|}{ \bf coker   } &  \multicolumn{3}{c|}{ $A(G)\xrightarrow{\beta_{\mathbb{F}}} R_{\mathbb{F}}(G)$ } &  \multicolumn{3}{c||}{ $A(G) \xrightarrow{\beta^{\mathrm{int}}_{\mathbb{F}}}R^{\mathrm{int}}_{\mathbb{F}}(G)$ }
 \\
 \cline{2-8}
 \raisebox{-12pt}{
   \begin{tabular}{c} \bf  Dynkin \\ \bf label \end{tabular}
 }
 &
 \raisebox{-6pt}{ \bf group } &
 \multicolumn{3}{c|}{ \raisebox{-6pt}{\bf ground field $\mathbb{F}$ } } &  \multicolumn{3}{c||}{ \raisebox{-6pt}{\bf ground field $\mathbb{F}$ } }
 \\
 &
 $G$
 & $\mathbb{Q}$ & $\mathbb{R}$ & $\mathbb{C}$ & $\mathbb{Q}$ & $\mathbb{R}$ & $\mathbb{C}$
  &
   \hspace{-.3cm}
   \begin{tabular}{c}
     {\rm \bf Proof}
     \\
     {\rm via }
     \\
     {\rm  Thm. \ref{AlgorithmForImageOfBeta}:}
   \end{tabular}
 \\
 \cline{2-9}
 &&&&&&&
 \\

 {\rm $\mathbb{A}_1$} & $C_2$ & $0$ & $0$ & $0$
 & $0$ & \cellcolor{lightgray}$0$ & $0$
 &
    ~~~\cref{C2}
 \\
 &&&&&&&
 \\
 {\rm $\mathbb{A}_2$} & $C_3$ & $0$ & $\frac{\mathbb{Z}[\rho_2, \rho_3]}{\mathbb{Z}[\rho_2 + \rho_3]}$ & $0$
 & $0$ & \cellcolor{lightgray}$0$ & $\frac{\mathbb{Z}[\rho_2, \rho_3]}{\mathbb{Z}[\rho_2 + \rho_3]}$
 &
  ~~~\cref{C3}
 \\
 &&&&&&&
 \\
 {\rm $\mathbb{A}_3 = \mathbb{D}_3$} & $C_4$ & $0$ & $\frac{\mathbb{Z}[\rho_2, \rho_4]}{\mathbb{Z}[\rho_2 + \rho_4]}$ & $0$
 & $0$ & \cellcolor{lightgray}$0$ & $\frac{\mathbb{Z}[\rho_2, \rho_4]}{\mathbb{Z}[\rho_2 + \rho_4]}$
 &
 ~~~\cref{C4}
 \\
 &&&&&&&
 \\
 {\rm $\mathbb{D}_4$}
 & $2D_4$ & $0$ & $0$ & $\frac{\mathbb{Z}[\rho_5]}{\mathbb{Z}[2\rho_5]}$
 & $0$ & \cellcolor{lightgray}$0$ & $\frac{\mathbb{Z}[\rho_5]}{\mathbb{Z}[2\rho_5]}$
 &
  ~~~\cref{BinaryDihedral2D4}
 \\
 &&&&&&&
 \\
 {\rm $\mathbb{D}_5$} & $2D_6$ & $0$ & $0$ & $\frac{\mathbb{Z}[ \rho_3, \rho_4, \rho_6 ] }{ \mathbb{Z}[ \rho_3 + \rho_4, 2\rho_6 ] }$
 & $0$ & \cellcolor{lightgray}$0$ & $\frac{\mathbb{Z}[\rho_6 ] }{ \mathbb{Z}[2\rho_6 ] }$
 &
  ~~~\cref{BinaryDihedral2D6}
 \\
 &&&&&&&
 \\
 {\rm $\mathbb{D}_6$} & $2D_8$
 & $0$
 & $ \frac{\mathbb{Z}[2 \rho_6, 2\rho_7]  }{ \mathbb{Z}[ 2\rho_6 + 2\rho_7 ] } $
 & $ \frac{\mathbb{Z}[ \rho_6, \rho_7 ]}{ \mathbb{Z}[ 2\rho_6 + 2\rho_7 ] } $
 &
   $0$ & \cellcolor{lightgray}$0$ & $ \frac{ \mathbb{Z}[ \rho_6 + \rho_7 ] }{ \mathbb{Z}[ 2\rho_6 + 2\rho_7 ] }  $
 &
 ~~~\cref{BinaryDihedraQ16}
 \\
 &&&&&&&
 \\
 {\rm $\mathbb{D}_7$} & $2D_{10}$
 & $0$
 & $ \frac{ \mathbb{Z}[\rho_3, \rho_4, \rho_5, \rho_6, \rho_7, \rho_8] }{ \mathbb{Z}[ \rho_3 + \rho_4, \rho_5 + \rho_6, 2 \rho_7 + 2\rho_8 ] } $
 & $ \frac{ \mathbb{Z}[\rho_3, \rho_4, \rho_5, \rho_6, 2\rho_7, 2\rho_8] }{ \mathbb{Z}[ \rho_3 + \rho_4, \rho_5 + \rho_6, 2 \rho_7 + 2\rho_8 ] } $
 &
   $0$ & \cellcolor{lightgray}$0$ & $ \frac{ \mathbb{Z}[ \rho_7 + \rho_8 ] }{ \mathbb{Z}[ 2\rho_7 + 2\rho_8 ] }  $
 &
 ~~~\cref{BinaryDihedraD10}
 \\
 &&&&&&&
 \\
 {\rm $\mathbb{D}_{8}$} & $2D_{12}$
 & $0$
 & $ \frac{\mathbb{Z}[ \rho_7, \rho_8, \rho_9 ] }{ \mathbb{Z}[ 2\rho_7, 2\rho_8 + 2 \rho_9 ] } $
 & $ \frac{\mathbb{Z}[ 2\rho_8, 2\rho_9 ] }{ \mathbb{Z}[ 2\rho_8 + 2 \rho_9 ] } $
 &
   $0$ & \cellcolor{lightgray}$0$ & $ \frac{ \mathbb{Z}[ \rho_7  ] }{ \mathbb{Z}[ 2\rho_7  ] }  $
 &
 ~~~\cref{BinaryDihedraD12}
 \\
 &&&&&&&
 \\
 {\rm $\mathbb{E}_6$} & $2T$ & $0$ & $0$ & $\frac{\mathbb{Z}[\rho_2, \rho_2^\ast, \rho_4, \rho_4^\ast, \rho_5]}{\mathbb{Z}[\rho_2 + \rho_2^\ast, \rho_4 + \rho_4^\ast, 2 \rho_5]}$
 & $0$ & \cellcolor{lightgray}$0$ & $ \frac{ \mathbb{Z}[\rho_5] }{ \mathbb{Z}[2 \rho_5] } $
 &
 ~~~\cref{BinaryTetrahedral}
 \\
 &&&&&&&
 \\
 {\rm $\mathbb{E}_7$} & $2O$ &
   $0$
   &
   $
     \frac{\mathbb{Z}[2\rho_6, 2\rho_7]}{ \mathbb{Z}[ 2\rho_6 + 2\rho_7 ] }
   $
   & $\frac{ \mathbb{Z}[\rho_6, \rho_7, \rho_8] }{ \mathbb{Z}[2\rho_6 + 2\rho_7, 2\rho_8] }$
 & $0$ & \cellcolor{lightgray}$0$ & $ \frac{ \mathbb{Z}[\rho_8] }{ \mathbb{Z}[ 2\rho_8 ] } $
 &
 ~~~\cref{BinaryOctahedralGroup}
 \\
 &&&&&&&
 \\
 {\rm $\mathbb{E}_8$} & $2I$ &
    $0$
    &
    $\frac{ \mathbb{Z}[ 2\rho_2, 2\rho_3, \rho_4, \rho_5   ] }{ \mathbb{Z}[ 2\rho_2 + 2\rho_3, \rho_4 + \rho_5  ]  }$
    &
    $\frac{ \mathbb{Z}[ \rho_2, \rho_3, \rho_4, \rho_5, \rho_7, \rho_9 ] }{ \mathbb{Z}[ 2\rho_2 + 2\rho_3, \rho_4 + \rho_5, 2\rho_7, 2\rho_9 ]  }$
    &
    $0$
    &
    \cellcolor{lightgray}$0$
    &
    $\frac{ \mathbb{Z}[ \rho_2 + \rho_3, \rho_7, \rho_9 ] }{ \mathbb{Z}[ 2\rho_2 + 2\rho_3, 2\rho_7, 2\rho_9 ]  }$
    &
  ~~~\cref{BinaryIcosahedralGroup}
  \\
  &&&&&&&
  \\
  \hline
  &&&&&&&
  \\
 &
 $\mathrm{GL}(2,\mathbb{F}_3)$ & $0$ & $0$ & $ \frac{ \mathbb{Z}[\rho_6, \rho_7] }{ \mathbb{Z}[ \rho_6 + \rho_7 ] } $
 & $0$ & \cellcolor{lightgray}$0$ & $0$
 &
 ~~~\cref{TheGeneralLinear23}
 \\
 \\
 \\
\end{tabular}
}
\end{theorem}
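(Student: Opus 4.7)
The plan is a case-by-case verification, one row per group, obtained by direct application of the algorithm of Theorem \ref{AlgorithmForImageOfBeta}. Each case is treated in its own subsection, as indicated by the \cref pointers in the last column of the table. For a given group $G$, the workflow is as follows. First, enumerate the conjugacy classes of subgroups of $G$ and compile the table of marks $m = (m_{ij})$ of Definition \ref{TableOfMarks}; this is standard data available, for instance from the GAP library, and can be produced by hand in the small cases. Next, compute its inverse $m^{-1}$ and use formula \eqref{BurnsideMultiplicitiesViaTableOfMarks} to assemble the Burnside structure constants $n_{ij}^{\ell}$ and the total multiplicity matrix $M_{ij} = \sum_\ell n_{ij}^\ell$ of \eqref{BurnsideMultiplicationMultiplicities}. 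Then perform row reduction on $M$ to the upper triangular form $\tilde H$ of \eqref{ZeroRowsDeleted}, and retain the transformation matrix $\tilde U$.

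By Proposition \ref{MatrixBeta}, the rows of $\tilde U$ express a full-rank orthogonal basis $V_i$ of the image of $\beta$ as $\mathbb{Z}$-linear combinations of the transitive permutation representations $k[G/H_\ell]$; by Proposition \ref{CharacterTableForImageOfBeta} the characters of these basis elements are then read off as $\chi_{V_i}(g) = \sum_\ell \tilde U_i^\ell \,\vert(G/H_\ell)^g\vert$. Finally, consult the standard character tables of the irreducible $k$-linear representations $\rho_j$ (e.g.\ \cite{DokGroupNames}) and compare the lattice $\mathbb{Z}\langle \chi_{V_i}\rangle$ against $\mathbb{Z}\langle \chi_{\rho_j}\rangle$ inside the space of class functions. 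By Proposition \ref{CharactersInCharacteristicZeroRecognizeRepresentations} this identifies the cokernel as an explicit finitely generated abelian group, which is what the table displays. The corestriction onto integer characters is then obtained by intersecting with $R^{\mathrm{int}}_k(G)$, using Proposition \ref{CorestrictionToIntegerCharacters} to see that the image of $\beta$ automatically lies in that sublattice.

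The structure of the answers themselves can already be anticipated on general grounds: complex irreps $\rho_i$ that are Galois-conjugate over $\mathbb{R}$ account for the cokernel classes of the form $[\rho_i+\rho_i^\ast]$ over $\mathbb{C}$, while irreps of quaternionic type contribute classes of the form $[2\rho_i]$. Over $\mathbb{R}$, the pair $\rho_i+\rho_i^\ast$ already appears as an irreducible real representation, which is why all such contributions drop out on passage from $\mathbb{C}$ to $\mathbb{R}$, and the only surviving contributions come from the quaternionic-type irreps, whose characters nevertheless take values in $\mathbb{Z}$. Precisely this is what forces the shaded column $\mathrm{coker}(\beta^{\mathrm{int}}_{\mathbb{R}})$ to vanish uniformly across the examples; anchoring the physical conclusion feeding back into \cref{BraneChargeQuantizationInMTheory}.

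The main difficulty is twofold. First, for the larger groups $2T$, $2O$, $2I$, and $\mathrm{GL}(2,\mathbb{F}_3)$, the subgroup lattices and therefore the tables of marks become sizeable -- the binary icosahedral group $2I$ of order $120$ has many conjugacy classes of subgroups -- so the linear algebra is most conveniently delegated to the \texttt{Python} implementation provided as an ancillary file. Second, one must verify in each case that the row reduction can actually be carried out by elementary elimination, and that the resulting $V_i$ turn out to be genuine (as opposed to merely virtual) permutation representations; this is the empirical point noted in the remark following Theorem \ref{AlgorithmForImageOfBeta}, and it is what makes the algorithm tractable by hand in each of the displayed examples. Once this is checked, every entry of the table follows by inspection of the resulting character lattices against the known character tables.
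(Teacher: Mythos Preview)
Your overall approach is correct and matches the paper's own proof exactly: a case-by-case verification via the algorithm of Theorem \ref{AlgorithmForImageOfBeta}, with the computations for each group carried out in the subsections referenced in the last column of the table.

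One caveat: the heuristic in your third paragraph is not quite right and should be dropped or rephrased. You claim that over $\mathbb{R}$ the surviving cokernel contributions come from quaternionic-type irreps ``whose characters nevertheless take values in $\mathbb{Z}$,'' and that this forces $\mathrm{coker}(\beta^{\mathrm{int}}_{\mathbb{R}}) = 0$. But in several of the tabulated cases the quaternionic irreps do \emph{not} have integer characters: for $2D_8$ and $2O$ the characters of $\rho_6, \rho_7$ involve $\sqrt{2}$; for $2I$ the characters of $\rho_2, \rho_3$ involve the golden ratio. The vanishing of $\mathrm{coker}(\beta^{\mathrm{int}}_{\mathbb{R}})$ is precisely the assertion that $\mathrm{image}(\beta)$ coincides with $R^{\mathrm{int}}_{\mathbb{R}}(G)$, and this is the empirical output of the case-by-case computation, not a consequence of the real/complex/quaternionic trichotomy alone. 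The paper presents it as such (see the end of \cref{GeneralFacts}, where it is stated as a conjecture for general binary dihedral groups rather than a theorem). Your actual proof---the algorithmic verification---is unaffected by this, but the heuristic paragraph overstates what general type considerations deliver.
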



\vspace{-5mm}
\noindent For emphasis we highlight by example how to read the
table in Theorem \ref{CokernelOfBetaInVariousExamples}:
\begin{itemize}
\vspace{-2mm}
 \item an entry ``$0$'' means that the cokernel vanishes, hence that $\beta$ is surjective;
 \vspace{-2mm}
 \item an entry ``$\frac{\mathbb{Z}[ \rho ]}{ \mathbb{Z}[2 \rho] }$'' means that the image of $\beta$ consists of all those virtual representations whose
 $\rho$-component has even multiplicity;
 \vspace{-2mm}
 \item an entry ``$\frac{\mathbb{Z}[ \rho_1, \rho_2 ]}{ \mathbb{Z}[\rho_1 + \rho_2] }$'' means that the image of $\beta$ consists of all those virtual representations whose
 $\rho_1$-component has the same weight as their $\rho_2$-component.
\end{itemize}
\vspace{-1mm}
Here $\rho_i$ refers to the irreducible representations as tabulated in the respective subsection below.
From the character tables given there one also reads off whether the character of $\rho_i$ is integer-valued
or else (by Prop. \ref{RationalCharactersAreInteger}) irrational.
The cokernel for $\beta^{\mathrm{int}}_k$ is obtained from that of $\beta_k$ by removing
those generators from the numerator that have irrational-valued characters.

\newcommand{\thinplus}{\!+\!}
\newcommand{\Complex}{\mathbb{C}}
\newcommand{\Rational}{\mathbb{Q}}


\subsection{Cyclic groups: $C_n$}

For completeness, we include discussion of the first three cyclic groups,
which may be thought of as completing the D-series 
of finite subgroups of $\mathrm{SU}(2)$ (from Prop. \ref{ADEGroups})\
in the low range.
These simple examples may serve to introduce and illustrate our notation for recording application of the algorithm (Theorem \ref{AlgorithmForImageOfBeta})
in the examples to follow further below.

\subsubsection{The cyclic group $C_2$}
\label{C2}

\noindent Group name: $C_2$ (\cite{DokC2})

\noindent Group order: 2

\medskip

\noindent Subgroups:

$$
\begin{array}{|c|c|c|c|c|}
\hline
\text{subgroup} & \text{order} & \text{cosets} & \text{conjugates} & \text{cyclic} \\
\hline
A & 2 & 1 & 1 & \checkmark \\
\hline
B & 1 & 2 & 1 & \checkmark \\
\hline
\end{array}
$$

\noindent Burnside ring product:

$$
\begin{array}{r|rr}
\times & A & B \\
\hline
A & A & B \\
B & B & 2B\\
\end{array}
$$

\noindent Table of multiplicities:

$$
\begin{array}{c|cc}
 & A & B \\
\hline
A & 1 & 1 \\
B & 1 & 2 \\
\end{array}
$$

\noindent Upper triangular form:

$$
\begin{array}{c|cc}
 & A & B \\
\hline
V_1 & 1 & 1 \\
V_2 & . & 1 \\
\end{array}
$$

\noindent Character table for image of $\beta$:

$$
\begin{array}{r|rr}
\mathrm{class} & 1 & 2 \\
\mathrm{size} & 1 & 1 \\
\hline
V_1 & 1 & 1 \\
V_2 & 1 & -1 \\
\end{array}
$$

\noindent Character table of irreps:

over $\mathbb{C}$:

\begin{center}
\begin{tabular}{cc|cc}
 \multicolumn{2}{c}{  }
 &
 \multicolumn{2}{c}
 {
   $
   \mathclap{
   \begin{tabular}{c}
     conjugacy
     \\
     class
   \end{tabular}
   }
   $
 }
 \\
 \multicolumn{2}{c|}{} & $1$ & $2$
 \\
 \cline{2-4}
 \multirow{2}{*}{
   \raisebox{-25pt}
     {
       \begin{rotate}{90}
         \begin{tabular}{c}
           irred.
           \\
           repr.
           \\
           $\phantom{{A \atop A} \atop {A \atop A}}$
         \end{tabular}
       \end{rotate}
     }
     \hspace{-20pt} }
 &
 $\rho_1$ & $\phantom{+}1$ & $\phantom{+}1$
 \\
 &
 $\rho_2$ & $\phantom{+}1$ & $-1$
\end{tabular}
\end{center}

over $\mathbb{R}$:

\begin{center}
\begin{tabular}{cc|cc}
 \multicolumn{2}{c}{  }
 &
 \multicolumn{2}{c}
 {
   $
   \mathclap{
   \begin{tabular}{c}
     conjugacy
     \\
     class
   \end{tabular}
   }
   $
 }
 \\
 \multicolumn{2}{c|}{} & $1$ & $2$
 \\
 \cline{2-4}
 \multirow{2}{*}{
   \raisebox{-25pt}
     {
       \begin{rotate}{90}
         \begin{tabular}{c}
           irred.
           \\
           repr.
           \\
           $\phantom{{A \atop A} \atop {A \atop A}}$
         \end{tabular}
       \end{rotate}
     }
     \hspace{-20pt} }
 &
 $\rho_1$ & $\phantom{+}1$ & $\phantom{+}1$
 \\
 &
 $\rho_2$ & $\phantom{+}1$ & $-1$
\end{tabular}
\end{center}

\noindent Hence the cokernel of $\beta$ is:

$$
  \begin{array}{c|c}
  \begin{aligned}
    V_1 & = \rho_1
    \\
    V_2 & = \rho_2
  \end{aligned}
  &
  \mathrm{coker}
  \left(
    A(C_2)
    \overset{\beta}{\to}
    R_k(C_2)
  \right)
  \;\simeq\;
  \left\{
  \begin{array}{ccc}
    $0$ &\vert& k = \mathbb{C}
    \\
    $0$ &\vert& k =\mathbb{R}
    \\
    0 &\vert& k = \mathbb{Q}
  \end{array}
  \right.
  \end{array}
$$

\subsubsection{The cyclic group $C_3$}
\label{C3}

\noindent Group name: $C_3$ (\cite{DokC3})

\noindent Group order: 3

\medskip

\noindent Subgroups:

$$
\begin{array}{|c|c|c|c|c|}
\hline
\text{subgroup} & \text{order} & \text{cosets} & \text{conjugates} & \text{cyclic} \\
\hline
A & 3 & 1 & 1 & \checkmark \\
\hline
B & 1 & 3 & 1 & \checkmark \\
\hline
\end{array}
$$

\noindent Burnside ring product:

$$
\begin{array}{r|rr}
\times & A & B \\
\hline
A & A & B \\
B & B & 3B\\
\end{array}
$$

\noindent Table of multiplicities:

$$
\begin{array}{c|cc}
 & A & B \\
\hline
A & 1 & 1 \\
B & 1 & 3 \\
\end{array}
$$

\noindent Upper triangular form:

$$
\begin{array}{c|cc}
 & A & B \\
\hline
V_1 & 1 & 1 \\
V_2 & . & 2 \\
\end{array}
$$

\noindent Character table for image of $\beta$:

$$
\begin{array}{r|rrr}
\mathrm{class} & 1 & 2 & 3 \\
\mathrm{size} & 1 & 1 & 1\\
\hline
V_1 & 1 & 1 & 1 \\
V_2 & 2 & -1 & -1 \\
\end{array}
$$

\noindent Character table of irreps:

over $\mathbb{C}$:

\begin{center}
\begin{tabular}{cc|ccc}
 \multicolumn{2}{c}{  }
 &
 \multicolumn{3}{c}
 {
   $
   \mathclap{
   \begin{tabular}{c}
     conjugacy
     class
   \end{tabular}
   }
   $
 }
 \\
 \multicolumn{2}{c|}{} & $1$ & $2$ & $3$
 \\
 \cline{2-5}
 \multirow{2}{*}{
   \raisebox{-50pt}
     {
       \begin{rotate}{90}
         \begin{tabular}{c}
           irred. repr.
           \\
           $\phantom{ {A \atop A} \atop {A \atop A}}$
         \end{tabular}
       \end{rotate}
     }
     \hspace{-20pt} }
 &
 $\rho_1$ & $\phantom{+}1$ & $\phantom{+}1$ & $\phantom{+}1$
 \\
 &
 $\rho_2$ & $\phantom{+}1$ & $e^{2\pi i \tfrac{1}{3}}$ & $e^{2\pi i \tfrac{2}{3}}$
 \\
 &
 $\rho_3$ & $\phantom{+}1$ & $e^{2\pi i \tfrac{-1}{3}}$ & $e^{-2\pi i \tfrac{2}{3}}$
\end{tabular}
\end{center}

over $\mathbb{R}$:

\begin{center}
\begin{tabular}{cc|ccc}
 \multicolumn{2}{c}{  }
 &
 \multicolumn{3}{c}
 {
   $
   \mathclap{
   \begin{tabular}{c}
     conjugacy
     class
   \end{tabular}
   }
   $
 }
 \\
 \multicolumn{2}{c|}{} & $1$ & $2$ & $3$
 \\
 \cline{2-5}
 \multirow{2}{*}{
   \raisebox{-26pt}
     {
       \begin{rotate}{90}
         \begin{tabular}{c}
           irred.
           \\
           repr.
           \\
           $\phantom{ {A \atop A} \atop {A \atop A}}$
         \end{tabular}
       \end{rotate}
     }
     \hspace{-20pt} }
 &
 $\rho_1$ & $\phantom{+}1$ & $\phantom{+}1$ & $\phantom{+}1$
 \\
 &
 $\rho_2 + \rho_3$ & $\phantom{+}2$ & $-1$ & $-1$
\end{tabular}
\end{center}

\noindent Hence the cokernel of $\beta$ is:

$$
  \begin{array}{c|c}
  \begin{aligned}
    V_1 & = \rho_1
    \\
    V_2 & = \rho_2 + \rho_3
  \end{aligned}
  &
  \mathrm{coker}
  \left(
    A(C_3)
    \overset{\beta}{\to}
    R_k(C_3)
  \right)
  \;\simeq\;
  \left\{
  \begin{array}{ccc}
    \tfrac{\mathbb{Z}[\rho_2, \rho_3]}{\mathbb{Z}[\rho_2 + \rho_3]} &\vert& k = \mathbb{C}
    \\
    $0$ &\vert& k =\mathbb{R}
    \\
    0 &\vert& k = \mathbb{Q}
  \end{array}
  \right.
  \end{array}
$$

\subsubsection{The cyclic group $C_4$}
\label{C4}

\noindent Group name: $C_4$ (\cite{DokC4})

\noindent Group order: 4

\medskip

\noindent Subgroups:

$$
\begin{array}{|c|c|c|c|c|}
\hline
\text{subgroup} & \text{order} & \text{cosets} & \text{conjugates} & \text{cyclic} \\
\hline
A & 4 & 1 & 1 & \checkmark \\
\hline
B & 2 & 2 & 1 & \checkmark \\
\hline
C & 1 & 4 & 1 & \checkmark \\
\hline
\end{array}
$$

\noindent Burnside ring product:

$$
\begin{array}{r|rrr}
\times & A & B & C\\
\hline
A & A & B & C\\
B & B & 2B & 2C \\
C & C & 2C & 4C \\
\end{array}
$$

\noindent Table of multiplicities:

$$
\begin{array}{c|ccc}
 & A & B & C\\
\hline
A & 1 & 1 & 1 \\
B & 1 & 2 & 2 \\
C & 1 & 2 & 4 \\
\end{array}
$$

\noindent Upper triangular form:

$$
\begin{array}{c|cccc}
 & A & B & C\\
\hline
V_1 & 1 & 1 & 1 \\
V_2 & 0 & 1 & 1 \\
V_3 & 0 & 0 & 2 \\
\end{array}
$$

\noindent Character table for image of $\beta$:

$$
\begin{array}{r|rrrr}
\mathrm{class} & 1 & 2 & 3 & 4\\
\mathrm{size} & 1 & 1 & 1 & 1\\
\hline
V_1 & 1 & 1 & 1 & 1\\
V_2 & 1 & -1 & 1 & -1 \\
V_3 & 2 
    & 0 
    & -2 
    & 0 
\end{array}
$$

\noindent Character table of irreps:

over $\mathbb{C}$:

\begin{center}
\begin{tabular}{cc|cccc}
 \multicolumn{2}{c}{  } &  \multicolumn{4}{c}{ conjugacy class }
 \\
 \multicolumn{2}{c|}{} & $1$ & 2 & 3 & 4
 \\
 \cline{2-6}
 \multirow{3}{*}{
   \raisebox{-35pt}{
     \begin{rotate}{90}
       \begin{tabular}{c}
         irred.
         \\
         repr.
         \\
         $\phantom{{A \atop A} \atop {A \atop A}}$
       \end{tabular}
     \end{rotate}
   }
   \hspace{-20pt}
 }
 &
 $\rho_1$ & $\phantom{+}1$ & $\phantom{+}1$ & $\phantom{+}1$ & $\phantom{+}1$
 \\
 &
 $\rho_2$ & $\phantom{+}1$ & $\phantom{+}i$ & $-1$ & $-i$
 \\
 &
 $\rho_3$ & $\phantom{+}1$ & $-1$ & $\phantom{+}1$ & $-1$
 \\
 &
 $\rho_4$ & $\phantom{+}1$ & $-i$ & $-1$ & $\phantom{+}i$
 \\
\end{tabular}
\end{center}

over $\mathbb{R}$:

\begin{center}
\begin{tabular}{cc|cccc}
 \multicolumn{2}{c}{  } &  \multicolumn{4}{c}{ conjugacy class }
 \\
 \multicolumn{2}{c|}{} & $1$ & 2 & 3 & 4
 \\
 \cline{2-6}
 \multirow{3}{*}{
   \raisebox{-30pt}{
     \begin{rotate}{90}
       \begin{tabular}{c}
         irred.
         \\
         repr.
         \\
         $\phantom{{A \atop A} \atop {A \atop A}}$
       \end{tabular}
     \end{rotate}
   }
   \hspace{-20pt}
 }
 &
 $\rho_1$ & $\phantom{+}1$ & $\phantom{+}1$ & $\phantom{+}1$ & $\phantom{+}1$
 \\
 &
 $\rho_3$ & $\phantom{+}1$ & $-1$ & $\phantom{+}1$ & $-1$
 \\
 &
 $\rho_2 + \rho_4$ & $\phantom{+}2$ & $\phantom{+}0$ & $-2$ & $\phantom{+}0$
 \\
\end{tabular}
\end{center}

\noindent Hence the cokernel of $\beta$ is:

$$
  \begin{array}{c|c}
  \begin{aligned}
    V_1 & = \rho_1
    \\
    V_2 & = \rho_3
    \\
    V_3 & = \rho_2 + \rho_4
  \end{aligned}
  &
  \mathrm{coker}
  \left(
    A(C_4)
    \overset{\beta}{\to}
    R_k(C_4)
  \right)
  \;\simeq\;
  \left\{
  \begin{array}{ccc}
    \frac{ \mathbb{Z}[\rho_2, \rho_4] }{ \mathbb{Z}[\rho_2 + \rho_4] } &\vert& k = \mathbb{C}
    \\
    $0$ &\vert& k =\mathbb{R}
    \\
    0 &\vert& k = \mathbb{Q}
  \end{array}
  \right.
  \end{array}
$$

\subsection{Binary dihedral groups: $2D_{2n} \simeq \mathrm{Dic}_n$}

The binary dihedral groups have the following presentation (see e.g. \cite{DokDic} \cite{Lindh2018}):
$$
    2D_{2n} := \langle r, s | r^{2n}=1, s^2=r^n, s^{-1}rs = r^{-1}\rangle.
$$
The order of $2D_{2n}$ is $4n$ and has $n+3$ conjugacy classes:
\begin{align*}
&\{1\}, \{s^2\},\\
&\{r, r^{2n-1}\}, \{r^2,r^{2n-2}\}, ..., \{r^{n-1}, r^{n+1}\}, \\
&\{s, sr^2,...,sr^{2n-2}\}, \{sr, sr^3,...,sr^{2n-1}\}.
\end{align*}
The $n+3$ complex irreducible characters are given by:
$$
\begin{array}{c|cccccccc}
2D_{2n} = \mathrm{Dic}_n  &  1  &  s^2  &  r  &  r^2  &  \cdots  &  r^{n-1}   &  s  &  sr  \\
\hline
\mathrm{Triv.}  &  1   &  1  &  1  &  1  &  \cdots  &  1  &  1  &  1 \\
\mathrm{1A}  &  1   &  1  &  1  &  1  &  \cdots  &  1  &  -1  &  -1 \\
\mathrm{1B}  &  1   &  (-1)^n  &  -1  &  1  &  \cdots  &  (-1)^{n-1}  &  i^n  &  -i^n \\
\mathrm{1C}  &  1   &  (-1)^n  &  -1  &  1  &  \cdots  &  (-1)^{n-1}  &  -i^n  &  i^n \\
\rho_1 & 2 & -2 & \zeta+\zeta^{-1} & \zeta^2+\zeta^{-2} & \cdots & \zeta^{n-1}+\zeta^{1-n} & 0 & 0 \\
\rho_k & 2 & (-1)^k2 & \zeta^k+\zeta^{-k} & \zeta^{2k}+\zeta^{-2k} & \cdots & \zeta^{k(n-1)}+\zeta^{k(1-n)} & 0 & 0 \\
\end{array}
$$
with $k=2,...,n-1$ and $\zeta = e^{2\pi i/2n}.$

The representations Triv. and 1A are always real.
For $n$ even 1B and 1C are real.
For $n$ odd 1A and 1B are complex, and 1A$+$1B is real irreducible.
For $k$ even $\rho_k$ is real. For $k$ odd $\rho_k$ is quaternionic, and so $2\rho_k$ is real.



\subsubsection{Binary dihedral group: $2 D_4 \simeq  \mathrm{Dic}_2 \simeq Q_8$}
 \label{BinaryDihedral2D4}

Group name: $2 D_4 \simeq Q_8$ (\cite{DokQ8})

\noindent Group order: ${\vert 2D_4\vert} = 8$

\medskip

\noindent Subgroups:

$$
\begin{array}{|c|c|c|c|c|}
\hline
\text{subgroup} & \text{order} & \text{cosets} & \text{conjugates} & \text{cyclic} \\
\hline
A & 8 & 1 & 1 &  \\
\hline
B & 4 & 2 & 1 & \checkmark \\
\hline
C & 4 & 2 & 1 & \checkmark \\
\hline
D & 4 & 2 & 1 & \checkmark \\
\hline
E & 2 & 4 & 1 & \checkmark \\
\hline
F & 1 & 8 & 1 & \checkmark \\
\hline
\end{array}
$$

\noindent Burnside ring product:

$$
\begin{array}{r|rrrrrr}
\times & A & B & C & D & E & F \\
\hline
A & A & B & C & D & E & F \\
B & B & 2B & E & E & 2E & 2F \\
C & C & E & 2C & E & 2E & 2F \\
D & D & E & E & 2D & 2E & 2F \\
E & E & 2E & 2E & 2E & 4E & 4F \\
F & F & 2F & 2F & 2F & 4F & 8F \\
\end{array}
$$

\noindent Table of multiplicities:

$$
\begin{array}{c|cccccc}
 & A & B & C & D & E & F \\
\hline
A & 1 & 1 & 1 & 1 & 1 & 1 \\
B & 1 & 2 & 1 & 1 & 2 & 2 \\
C & 1 & 1 & 2 & 1 & 2 & 2 \\
D & 1 & 1 & 1 & 2 & 2 & 2 \\
E & 1 & 2 & 2 & 2 & 4 & 4 \\
F & 1 & 2 & 2 & 2 & 4 & 8 \\
\end{array}
$$

\noindent Upper triangular form:

$$
\begin{array}{c|cccccc}
 & A & B & C & D & E & F \\
\hline
V_1 & 1 & 1 & 1 & 1 & 1 & 1 \\
V_2 & . & 1 & . & . & 1 & 1 \\
V_3 & . & . & 1 & . & 1 & 1 \\
V_4 & . & . & . & 1 & 1 & 1 \\
V_5 & . & . & . & . & . & 4 \\
\end{array}
$$


\noindent Character table for image of $\beta$:

$$
\begin{array}{r|rrrrr}
\mathrm{class} & 1 & 2 & 4A & 4B & 4C \\
\mathrm{size} & 1 & 1 & 2 & 2 & 2 \\
\hline
V_1 & 1 & 1 & 1 & 1 & 1 \\
V_2 & 1 & 1 & -1 & 1 & -1 \\
V_3 & 1 & 1 & -1 & -1 & 1 \\
V_4 & 1 & 1 & 1 & -1 & -1 \\
V_5 & 4 & -4 & 0 & 0 & 0 \\
\end{array}
$$

\medskip

\noindent Character table of irreps \cite{DokQ8, MonQ8}

over $\mathbb{C}$:

\begin{center}
\begin{tabular}{cc|ccccc}
 \multicolumn{2}{c}{  } &  \multicolumn{5}{c}{ conjugacy class }
 \\
 \multicolumn{2}{c|}{} & $1$ & 2 & 4A & 4B & 4C
 \\
 \cline{2-7}
 \multirow{5}{*}{ \raisebox{-50pt}{\begin{rotate}{90} irred. repr. \end{rotate}} \hspace{-20pt} }
 &
 $\rho_1$ & $\phantom{+}1$ & $\phantom{+}1$ & $\phantom{+}1$ & $\phantom{+}1$ & $\phantom{+}1$
 \\
 &
 $\rho_2$ & $\phantom{+}1$ & $\phantom{+}1$ & $-1$ & $\phantom{+}1$ & $-1$
 \\
 &
 $\rho_3$ & $\phantom{+}1$ & $\phantom{+}1$ & $\phantom{+}1$ & $-1$ & $-1$
 \\
 &
 $\rho_4$ & $\phantom{+}1$ & $\phantom{+}1$ & $-1$ & $-1$ & $\phantom{+}1$
 \\
 &
 $\rho_5$ & $\phantom{+}2$ & $-2$ & $\phantom{+}0$ & $\phantom{+}0$ & $\phantom{+}0$
\end{tabular}
\end{center}

\medskip

over $\mathbb{R}$:

\begin{center}
\begin{tabular}{cc|ccccc}
 \multicolumn{2}{c}{  } &  \multicolumn{5}{c}{ conjugacy class }
 \\
 \multicolumn{2}{c|}{} & 1 & 2 & 4A & 4B & 4C
 \\
 \cline{2-7}
 \multirow{5}{*}{ \raisebox{-50pt}{\begin{rotate}{90} irred. repr. \end{rotate}} \hspace{-20pt} }
 &
 $\rho_1$ & $\phantom{+}1$ & $\phantom{+}1$ & $\phantom{+}1$ & $\phantom{+}1$ & $\phantom{+}1$
 \\
 &
 $\rho_2$ & $\phantom{+}1$ & $\phantom{+}1$ & $-1$ & $\phantom{+}1$ & $-1$
 \\
 &
 $\rho_3$ & $\phantom{+}1$ & $\phantom{+}1$ & $\phantom{+}1$ & $-1$ & $-1$
 \\
 &
 $\rho_4$ & $\phantom{+}1$ & $\phantom{+}1$ & $-1$ & $-1$ & $\phantom{+}1$
 \\
 &
 $2 \rho_5$ & $\phantom{+}4$ & $-4$ & $\phantom{+}0$ & $\phantom{+}0$ & $\phantom{+}0$
\end{tabular}
\end{center}

\medskip

\noindent Hence the cokernel of $\beta$ is:

$$
  \begin{array}{c|c}
  \begin{aligned}
    V_1 & = \rho_1
    \\
    V_2 & = \rho_2
    \\
    V_3 & = \rho_4
    \\
    V_4 & = \rho_3
    \\
    V_5 & = 2 \rho_5
  \end{aligned}
  &
  \mathrm{coker}
  \left(
    A(2D_4)
    \overset{\beta}{\to}
    R_k(2D_4)
  \right)
  \;\simeq\;
  \left\{
  \begin{array}{ccc}
    \frac{ \mathbb{Z}[\rho_5] }{ \mathbb{Z}[ 2\rho_5 ]  } &\vert& k = \mathbb{C}
    \\
    $0$ &\vert& k =\mathbb{R}
    \\
    0 &\vert& k = \mathbb{Q}
  \end{array}
  \right.
  \end{array}
$$


\subsubsection{Binary dihedral group: $2 D_6 \simeq \mathrm{Dic}_3$}
 \label{BinaryDihedral2D6}

\noindent Group name: $2 D_6$ (\cite{Dok2D6})

\noindent Group order: $\vert 2 D_6\vert = 12$

\medskip

\noindent Subgroups:

$$
\begin{array}{|c|c|c|c|c|}
\hline
\text{subgroup} & \text{order} & \text{cosets} & \text{conjugates} & \text{cyclic} \\
\hline
A & 12 & 1 & 1 &  \\
\hline
B & 6 & 2 & 1 & \checkmark \\
\hline
C & 4 & 3 & 3 & \checkmark \\
\hline
D & 3 & 4 & 1 & \checkmark \\
\hline
E & 2 & 6 & 1 & \checkmark \\
\hline
F & 1 & 12 & 1 & \checkmark \\
\hline
\end{array}
$$

\noindent Burnside ring product:

$$
\begin{array}{r|rrrrrr}
\times & A & B & C & D & E & F \\
\hline
A & A & B & C & D & E & F \\
B & B & 2B & E & 2D & 2E & 2F \\
C & C & E & C\thinplus E & F & 3E & 3F \\
D & D & 2D & F & 4D & 2F & 4F \\
E & E & 2E & 3E & 2F & 6E & 6F \\
F & F & 2F & 3F & 4F & 6F & 12F \\
\end{array}
$$

\noindent Table of multiplicities:

$$
\begin{array}{c|cccccc}
 & A & B & C & D & E & F \\
\hline
A & 1 & 1 & 1 & 1 & 1 & 1 \\
B & 1 & 2 & 1 & 2 & 2 & 2 \\
C & 1 & 1 & 2 & 1 & 3 & 3 \\
D & 1 & 2 & 1 & 4 & 2 & 4 \\
E & 1 & 2 & 3 & 2 & 6 & 6 \\
F & 1 & 2 & 3 & 4 & 6 & 12 \\
\end{array}
$$

\noindent Upper triangular form:

$$
\begin{array}{c|cccccc}
 & A & B & C & D & E & F \\
\hline
V_1 & 1 & 1 & 1 & 1 & 1 & 1 \\
V_2 & . & 1 & . & 1 & 1 & 1 \\
V_3 & . & . & 1 & . & 2 & 2 \\
V_4 & . & . & . & 2 & . & 2 \\
V_5 & . & . & . & . & . & 4 \\
\end{array}
$$


\noindent Character table for image of $\beta$:

$$
\begin{array}{r|rrrrrr}
\mathrm{class} & 1 & 2 & 3 & 4A & 4B & 6 \\
\mathrm{size} & 1 & 1 & 2 & 3 & 3 & 2 \\
\hline
V_1 & 1 & 1 & 1 & 1 & 1 & 1 \\
V_2 & 1 & 1 & 1 & -1 & -1 & 1 \\
V_3 & 2 & 2 & -1 & 0 & 0 & -1 \\
V_4 & 2 & -2 & 2 & 0 & 0 & -2 \\
V_5 & 4 & -4 & -2 & 0 & 0 & 2 \\
\end{array}
$$

\medskip

\noindent Character table of irreps \cite{Dok2D6}

over $\mathbb{C}$:

\begin{center}
\begin{tabular}{cc|cccccc}
 \multicolumn{2}{c}{  } &  \multicolumn{5}{c}{ conjugacy class }
 \\
 \multicolumn{2}{c|}{} & $1$ & 2 & 3 &  4A & 4B & 6
 \\
 \cline{2-8}
 \multirow{5}{*}{ \raisebox{-50pt}{\begin{rotate}{90} irred. repr. \end{rotate}} \hspace{-20pt} }
 &
 $\rho_1$ & $\phantom{+}1$ & $\phantom{+}1$ & $\phantom{+}1$ & $\phantom{+}1$ & $\phantom{+}1$ & $\phantom{+}1$
 \\
 &
 $\rho_2$ & $\phantom{+}1$ & $\phantom{+}1$ & $\phantom{+}1$ & $-1$ & $-1$ & $\phantom{+}1$
 \\
 &
 $\rho_3$ & $\phantom{+}1$ & $-1$ & $\phantom{+}1$ & $\phantom{+}i$ & $-i$ & $-1$
 \\
 &
 $\rho_4$ & $\phantom{+}1$ & $-1$ & $\phantom{+}1$ & $-i$ & $\phantom{+}i$ & $-1$
 \\
 &
 $\rho_5$ & $\phantom{+}2$ & $\phantom{+}2$ & $-1$ & $\phantom{+}0$ & $\phantom{+}0$ & $-1$
 \\
 &
 $\rho_6$ & $\phantom{+}2$ & $-2$ & $-1$ & $\phantom{+}0$ & $\phantom{+}0$ & $\phantom{+}1$
\end{tabular}
\end{center}

over $\mathbb{R}$:

\begin{center}
\begin{tabular}{cc|cccccc}
 \multicolumn{2}{c}{  } &  \multicolumn{5}{c}{ conjugacy class }
 \\
 \multicolumn{2}{c|}{} & $1$ & 2 & 3 &  4A & 4B & 6
 \\
 \cline{2-8}
 \multirow{5}{*}{ \raisebox{-50pt}{\begin{rotate}{90} irred. repr. \end{rotate}} \hspace{-20pt} }
 &
 $\rho_1$ & $\phantom{+}1$ & $\phantom{+}1$ & $\phantom{+}1$ & $\phantom{+}1$ & $\phantom{+}1$ & $\phantom{+}1$
 \\
 &
 $\rho_2$ & $\phantom{+}1$ & $\phantom{+}1$ & $\phantom{+}1$ & $-1$ & $-1$ & $\phantom{+}1$
 \\
 &
 $\rho_3 + \rho_4$ & $\phantom{+}2$ & $-2$ & $\phantom{+}2$ & $\phantom{+}0$ & $\phantom{+}0$ & $-2$
 \\
 &
 $\rho_5$ & $\phantom{+}2$ & $\phantom{+}2$ & $-1$ & $\phantom{+}0$ & $\phantom{+}0$ & $-1$
 \\
 &
 $2\rho_6$ & $\phantom{+}4$ & $-4$ & $-2$ & $\phantom{+}0$ & $\phantom{+}0$ & $\phantom{+}2$
\end{tabular}
\end{center}

\medskip

Hence the cokernel of $\beta$ is

$$
  \begin{array}{c|c}
  \begin{aligned}
    V_1 & = \rho_1
    \\
    V_2 & = \rho_2
    \\
    V_3 & = \rho_5
    \\
    V_4 & = \rho_3 + \rho_4
    \\
    V_5 & = 2\rho_6
  \end{aligned}
  &
  \mathrm{coker}\left( \beta_k\right)
  \;=\;
  \left\{
  \begin{array}{ccc}
    \frac{\mathbb{Z}[ \rho_3, \rho_4, \rho_6 ] }{ \mathbb{Z}[ \rho_3 + \rho_4, 2\rho_6 ] } &\vert& k = \mathbb{C}
    \\
    0 &\vert& k = \mathbb{R}
    \\
    0 &\vert& k = \mathbb{Q}
  \end{array}
  \right.
  \end{array}
$$


\subsubsection{Binary dihedral group: $2 D_{8} \simeq \mathrm{Dic}_4 \simeq Q_{16}$}
 \label{BinaryDihedraQ16}

\noindent Group name: $2 D_8$ (\cite{DokQ16})

\noindent Group order: $\vert 2 D_8\vert = 16$

\medskip

\noindent Subgroups:

$$
\begin{array}{|c|c|c|c|c|}
\hline
\text{subgroup} & \text{order} & \text{cosets} & \text{conjugates} & \text{cyclic} \\
\hline
A & 16 & 1 & 1 &  \\
\hline
B & 8 & 2 & 1 & \checkmark \\
\hline
C & 8 & 2 & 1 &  \\
\hline
D & 8 & 2 & 1 &  \\
\hline
E & 4 & 4 & 2 & \checkmark \\
\hline
F & 4 & 4 & 1 & \checkmark \\
\hline
G & 4 & 4 & 2 & \checkmark \\
\hline
H & 2 & 8 & 1 & \checkmark \\
\hline
I & 1 & 16 & 1 & \checkmark \\
\hline
\end{array}
$$

\noindent Table of multiplicities:

$$
\begin{array}{c|ccccccccc}
 & A & B & C & D & E & F & G & H & I \\
\hline
A & 1 & 1 & 1 & 1 & 1 & 1 & 1 & 1 & 1 \\
B & 1 & 2 & 1 & 1 & 1 & 2 & 1 & 2 & 2 \\
C & 1 & 1 & 2 & 1 & 2 & 2 & 1 & 2 & 2 \\
D & 1 & 1 & 1 & 2 & 1 & 2 & 2 & 2 & 2 \\
E & 1 & 1 & 2 & 1 & 3 & 2 & 2 & 4 & 4 \\
F & 1 & 2 & 2 & 2 & 2 & 4 & 2 & 4 & 4 \\
G & 1 & 1 & 1 & 2 & 2 & 2 & 3 & 4 & 4 \\
H & 1 & 2 & 2 & 2 & 4 & 4 & 4 & 8 & 8 \\
I & 1 & 2 & 2 & 2 & 4 & 4 & 4 & 8 & 16 \\
\end{array}
$$

\noindent Upper triangular form:

$$
\begin{array}{c|ccccccccc}
 & A & B & C & D & E & F & G & H & I \\
\hline
V_1 & 1 & 1 & 1 & 1 & 1 & 1 & 1 & 1 & 1 \\
V_2 & . & 1 & . & . & . & 1 & . & 1 & 1 \\ 
V_3 & . & . & 1 & . & 1 & 1 & . & 1 & 1 \\ 
V_4 & . & . & . & 1 & . & 1 & 1 & 1 & 1 \\ 
V_5 & . & . & . & . & 1 & . & 1 & 2 & 2 \\
V_6 & . & . & . & . & . & . & . & . & 8 \\
\end{array}
$$


\noindent Character table for image of $\beta$:

$$
\begin{array}{r|rrrrrrr}
\mathrm{class} & 1 & 2 & 4A & 4B & 4C & 8A & 8B \\
\mathrm{size}  & 1 & 1 & 2 & 4 & 4 & 2 & 2 \\
\hline
           V_1 & 1 &  1 & 1 &  1 &  1 &  1 &  1 \\
           V_2 & 1 &  1 & 1 & -1 &  1 &  -1 & -1 \\
           V_3 & 1 &  1 & 1 & -1 & -1 &  1 &  1 \\
           V_4 & 1 &  1 & 1 &  1 & -1 &  -1 & -1 \\
           V_5 & 2 &  2 & -2&  0 &  0 &  0 &  0 \\
           V_6 & 8 & -8 & 0 &  0 &  0 &  0 &  0 \\
\end{array}
$$

\medskip

\noindent Character table of irreps \cite{DokQ16}:

over $\mathbb{C}$

\begin{center}
\begin{tabular}{cc|ccccccc}
 \multicolumn{2}{c}{  } &  \multicolumn{7}{c}{ conjugacy class }
 \\
 \multicolumn{2}{c|}{} & 1 & 2 & 4A & 4B & 4C & 8A & 8B
 \\
 \cline{2-9}
 \multirow{7}{*}{ \raisebox{-50pt}{\begin{rotate}{90} irred. repr. \end{rotate}} \hspace{-20pt} }
 &
 $\rho_1$ & $\phantom{+}1$ & $\phantom{+}1$ & $\phantom{+}1$ & $\phantom{+}1$ & $\phantom{+}1$ & $\phantom{+}1$ & $\phantom{+}1$
 \\
 &
 $\rho_2$ & $\phantom{+}1$ & $\phantom{+}1$ & $\phantom{+}1$ & $\phantom{+}1$ & $-1$ & $-1$ & $-1$
 \\
 &
 $\rho_3$ & $\phantom{+}1$ & $\phantom{+}1$ & $\phantom{+}1$ & $-1$ & $\phantom{+}1$ & $-1$ & $-1$
 \\
 &
 $\rho_4$ & $\phantom{+}1$ & $\phantom{+}1$ & $\phantom{+}1$ & $-1$ & $-1$ & $\phantom{+}1$ & $\phantom{+}1$
 \\
 &
 $\rho_5$ & $\phantom{+}2$ & $\phantom{+}2$ & $-2$ & $\phantom{+}0$ & $\phantom{+}0$ & $\phantom{+}0$ & $\phantom{+}0$
 \\
 &
 $\rho_6$ & $\phantom{+}2$ & $-2$ & $\phantom{+}0$ & $\phantom{+}0$ & $\phantom{+}0$ & $\sqrt{2}$ & $-\sqrt{2}$
 \\
 &
 $\rho_7$ & $\phantom{+}2$ & $-2$ & $\phantom{+}0$ & $\phantom{+}0$ & $\phantom{+}0$ & $-\sqrt{2}$ & $\sqrt{2}$
\end{tabular}
\end{center}

over $\mathbb{R}$

\begin{center}
\begin{tabular}{cc|ccccccc}
 \multicolumn{2}{c}{  } &  \multicolumn{7}{c}{ conjugacy class }
 \\
 \multicolumn{2}{c|}{} & 1 & 2 & 4A & 4B & 4C & 8A & 8B
 \\
 \cline{2-9}
 \multirow{7}{*}{ \raisebox{-50pt}{\begin{rotate}{90} irred. repr. \end{rotate}} \hspace{-20pt} }
 &
 $\rho_1$ & $\phantom{+}1$ & $\phantom{+}1$ & $\phantom{+}1$ & $\phantom{+}1$ & $\phantom{+}1$ & $\phantom{+}1$ & $\phantom{+}1$
 \\
 &
 $\rho_2$ & $\phantom{+}1$ & $\phantom{+}1$ & $\phantom{+}1$ & $\phantom{+}1$ & $-1$ & $-1$ & $-1$
 \\
 &
 $\rho_3$ & $\phantom{+}1$ & $\phantom{+}1$ & $\phantom{+}1$ & $-1$ & $\phantom{+}1$ & $-1$ & $-1$
 \\
 &
 $\rho_4$ & $\phantom{+}1$ & $\phantom{+}1$ & $\phantom{+}1$ & $-1$ & $-1$ & $\phantom{+}1$ & $\phantom{+}1$
 \\
 &
 $\rho_5$ & $\phantom{+}2$ & $\phantom{+}2$ & $-2$ & $\phantom{+}0$ & $\phantom{+}0$ & $\phantom{+}0$ & $\phantom{+}0$
 \\
 &
 $2\rho_6$ & $\phantom{+}4$ & $-4$ & $\phantom{+}0$ & $\phantom{+}0$ & $\phantom{+}0$ & $2\sqrt{2}$ & $-2\sqrt{2}$
 \\
 &
 $2\rho_7$ & $\phantom{+}4$ & $-4$ & $\phantom{+}0$ & $\phantom{+}0$ & $\phantom{+}0$ & $-2\sqrt{2}$ & $2\sqrt{2}$
\end{tabular}
\end{center}

\medskip

\noindent Hence the cokernel of $\beta$ is:

$$
  \begin{array}{c|c}
  \begin{aligned}
    V_1 & = \rho_1
    \\
    V_2 & = \rho_3
    \\
    V_3 & = \rho_4
    \\
    V_4 & = \rho_2
    \\
    V_5 & = \rho_5
    \\
    V_6 & = 2 \rho_6 + 2 \rho_7
  \end{aligned}
  &
  \mathrm{coker}
  \left(
    A(2D_8)
    \overset{\beta}{\to}
    R_k(2D_8)
  \right)
  \;\simeq\;
  \left\{
  \begin{array}{cccc}
    \frac{\mathbb{Z}[ \rho_6, \rho_7 ]}{ \mathbb{Z}[ 2\rho_6 + 2\rho_7 ] } &\vert& k = \mathbb{C}
    \\
    \frac{\mathbb{Z}[2 \rho_6, 2\rho_7]  }{ \mathbb{Z}[ 2\rho_6 + 2\rho_7 ] } &\vert& k =\mathbb{R}
    \\
    0 &\vert& k = \mathbb{Q}
  \end{array}
  \right.
  \end{array}
$$



\subsubsection{Binary dihedral group: $2 D_{10} \simeq \mathrm{Dic}_5$ }
 \label{BinaryDihedraD10}

\noindent Group name: $2 D_{10}$ (\cite{Dok2D10})

\noindent Group order: $\vert 2 D_{10}\vert = 20$

\medskip

\noindent Subgroups:

$$
\begin{array}{|c|c|c|c|c|}
\hline
\text{subgroup} & \text{order} & \text{cosets} & \text{conjugates} & \text{cyclic} \\
\hline
A & 20 & 1 & 1 &  \\
\hline
B & 10 & 2 & 1 & \checkmark \\
\hline
C & 5 & 4 & 1 & \checkmark \\
\hline
D & 4 & 5 & 5 & \checkmark \\
\hline
E & 2 & 10 & 1 & \checkmark \\
\hline
F & 1 & 20 & 1 & \checkmark \\
\hline
\end{array}
$$

\noindent Table of multiplicities:

$$
\begin{array}{c|cccccc}
 & A & B & C & D & E & F \\
\hline
A & 1 & 1 & 1 & 1 & 1 & 1 \\
B & 1 & 2 & 2 & 1 & 2 & 2 \\
C & 1 & 2 & 4 & 1 & 2 & 4 \\
D & 1 & 1 & 1 & 3 & 5 & 5 \\
E & 1 & 2 & 2 & 5 & 10 & 10 \\
F & 1 & 2 & 4 & 5 & 10 & 20 \\
\end{array}
$$

\noindent Upper triangular form:

$$
\begin{array}{c|cccccc}
 & A & B & C & D & E & F \\
\hline
V_1 & 1 & 1 & 1 & 1 & 1 & 1 \\
V_2 & . & 1 & 1 & . & 1 & 1 \\
V_3 & . & . & 2 & . & . & 2 \\
V_4 & . & . & . & 2 & 4 & 4 \\
V_5 & . & . & . & . & . & 8 \\
\end{array}
$$


\noindent Character table for image of $\beta$:

$$
\begin{array}{r|rrrrrrrr}
\mathrm{class} & 1 & 2 & 4A & 4B & 5 & 5 & 10 & 10 \\
\mathrm{size} & 1 & 1 & 5 & 5 & 2 & 2 & 2 & 2 \\
\hline
V_{1} & 1 & 1 & 1 & 1 & 1 & 1 & 1 & 1 \\
V_{2} & 1 & 1 & -1 & -1 & 1 & 1 & 1 & 1 \\
V_{3} & 2 & -2 & 0 & 0 & 2 & 2 & -2 & -2 \\
V_{4} & 4 & 4 & 0 & 0 & -1 & -1 & -1 & -1 \\
V_{5} & 8 & -8 & 0 & 0 & -2 & -2 & 2 & 2 \\
\end{array}
$$

\medskip

\noindent Character table of irreps \cite{Dok2D10}

over $\mathbb{C}$


$$
\begin{array}{c|cccccccc}
\mathrm{class} & 1 & 2 & 4A & 4B & 5A & 5B & 10A & 10B \\
\hline
\rho_{1} & 1 & 1 & 1 & 1 & 1 & 1 & 1 & 1 \\
\rho_{2} & 1 & 1 & -1 & -1 & 1 & 1 & 1 & 1 \\
\rho_{3} & 1 & -1 & -i & i & 1 & 1 & -1 & -1 \\
\rho_{4} & 1 & -1 & i & -i & 1 & 1 & -1 & -1 \\
\rho_{5} & 2 & 2 & 0 & 0 & \zeta_{5}^2+\zeta_{5}^3 & \zeta_{5}+\zeta_{5}^4 & \zeta_{5}^2+\zeta_{5}^3 & \zeta_{5}+\zeta_{5}^4 \\
\rho_{6} & 2 & 2 & 0 & 0 & \zeta_{5}+\zeta_{5}^4 & \zeta_{5}^2+\zeta_{5}^3 & \zeta_{5}+\zeta_{5}^4 & \zeta_{5}^2+\zeta_{5}^3 \\
\rho_{7} & 2 & -2 & 0 & 0 & \zeta_{5}^2+\zeta_{5}^3 & \zeta_{5}+\zeta_{5}^4 & -\zeta_{5}^2-\zeta_{5}^3 & -\zeta_{5}-\zeta_{5}^4 \\
\rho_{8} & 2 & -2 & 0 & 0 & \zeta_{5}+\zeta_{5}^4 & \zeta_{5}^2+\zeta_{5}^3 & -\zeta_{5}-\zeta_{5}^4 & -\zeta_{5}^2-\zeta_{5}^3 \\
\end{array}
$$

over $\mathbb{R}$

$$
\begin{array}{c|cccccccc}
\mathrm{class} & 1 & 2 & 4A & 4B & 5A & 5B & 10A & 10B \\
\hline
\rho_{1} & 1 & 1 & 1 & 1 & 1 & 1 & 1 & 1 \\
\rho_{2} & 1 & 1 & -1 & -1 & 1 & 1 & 1 & 1 \\
\rho_{3}+\rho_4 & 2 & -2 & 0 & 0 & 2 & 2 & -2 & -2 \\
\rho_{5} & 2 & 2 & 0 & 0 & \zeta_{5}^2+\zeta_{5}^3 & \zeta_{5}+\zeta_{5}^4 & \zeta_{5}^2+\zeta_{5}^3 & \zeta_{5}+\zeta_{5}^4 \\
\rho_{6} & 2 & 2 & 0 & 0 & \zeta_{5}+\zeta_{5}^4 & \zeta_{5}^2+\zeta_{5}^3 & \zeta_{5}+\zeta_{5}^4 & \zeta_{5}^2+\zeta_{5}^3 \\
2 \rho_{7} & 4 & -4 & 0 & 0 & 2\left(\zeta_{5}^2+\zeta_{5}^3\right) & 2\left(\zeta_{5}+\zeta_{5}^4\right) & -2\left(\zeta_{5}^2-\zeta_{5}^3\right) & -2\left(\zeta_{5}-\zeta_{5}^4\right)
 \\
2 \rho_{8} & 4 & -4 & 0 & 0 & 2\left(\zeta_{5}+\zeta_{5}^4\right) & 2\left(\zeta_{5}^2+\zeta_{5}^3\right) & -2\left(\zeta_{5}-\zeta_{5}^4\right) & -2\left(\zeta_{5}^2-\zeta_{5}^3\right) \\
\end{array}
$$

\medskip

Hence the cokernel of $\beta$ is:

$$
  \begin{array}{c|c}
  \begin{aligned}
    V_1 & = \rho_1
    \\
    V_2 & = \rho_2
    \\
    V_3 & = \rho_3 + \rho_4
    \\
    V_4 & = \rho_5 + \rho_6
    \\
    V_5 & = 2 \rho_7 + 2 \rho_8
  \end{aligned}
  &
  \mathrm{coker}\left(
    A(2 D_{10})
      \overset{\beta}{\to}
    R_k(2 D_{10})
  \right)
  =
  \left\{
  \begin{array}{cccc}
    \frac{ \mathbb{Z}[\rho_3, \rho_4, \rho_5, \rho_6, \rho_7, \rho_8] }{ \mathbb{Z}[ \rho_3 + \rho_4, \rho_5 + \rho_6, 2 \rho_7 + 2\rho_8 ] }
    &\vert& k = \mathbb{C}
    \\
    \frac{ \mathbb{Z}[\rho_3, \rho_4, \rho_5, \rho_6, 2\rho_7, 2\rho_8] }{ \mathbb{Z}[ \rho_3 + \rho_4, \rho_5 + \rho_6, 2 \rho_7 + 2\rho_8 ] }
    &\vert& k = \mathbb{R}
    \\
    0 &\vert& k = \mathbb{Q}
  \end{array}
  \right.
  \end{array}
$$


\subsubsection{Binary dihedral group: $2 D_{12} \simeq \mathrm{Dic}_6$ }
 \label{BinaryDihedraD12}

\noindent Group name: $2 D_{12}$ (\cite{Dok2D12})

\noindent Group order: $\vert 2 D_{12}\vert = 24$

\medskip

\noindent Subgroups:

$$
\begin{array}{|c|c|c|c|c|}
\hline
\text{subgroup} & \text{order} & \text{cosets} & \text{conjugates} & \text{cyclic} \\
\hline
A & 24 & 1 & 1 &  \\
\hline
B & 12 & 2 & 1 & \checkmark \\
\hline
C & 12 & 2 & 1 &  \\
\hline
D & 12 & 2 & 1 &  \\
\hline
E & 8 & 3 & 3 &  \\
\hline
F & 6 & 4 & 1 & \checkmark \\
\hline
G & 4 & 6 & 3 & \checkmark \\
\hline
H & 4 & 6 & 1 & \checkmark \\
\hline
I & 4 & 6 & 3 & \checkmark \\
\hline
J & 3 & 8 & 1 & \checkmark \\
\hline
K & 2 & 12 & 1 & \checkmark \\
\hline
L & 1 & 24 & 1 & \checkmark \\
\hline
\end{array}
$$

\noindent Table of multiplicities:

$$
\begin{array}{c|cccccccccccc}
 & A & B & C & D & E & F & G & H & I & J & K & L \\
\hline
A & 1 & 1 & 1 & 1 & 1 & 1 & 1 & 1 & 1 & 1 & 1 & 1 \\
B & 1 & 2 & 1 & 1 & 1 & 2 & 1 & 2 & 1 & 2 & 2 & 2 \\
C & 1 & 1 & 2 & 1 & 1 & 2 & 2 & 1 & 1 & 2 & 2 & 2 \\
D & 1 & 1 & 1 & 2 & 1 & 2 & 1 & 1 & 2 & 2 & 2 & 2 \\
E & 1 & 1 & 1 & 1 & 2 & 1 & 2 & 3 & 2 & 1 & 3 & 3 \\
F & 1 & 2 & 2 & 2 & 1 & 4 & 2 & 2 & 2 & 4 & 4 & 4 \\
G & 1 & 1 & 2 & 1 & 2 & 2 & 4 & 3 & 3 & 2 & 6 & 6 \\
H & 1 & 2 & 1 & 1 & 3 & 2 & 3 & 6 & 3 & 2 & 6 & 6 \\
I & 1 & 1 & 1 & 2 & 2 & 2 & 3 & 3 & 4 & 2 & 6 & 6 \\
J & 1 & 2 & 2 & 2 & 1 & 4 & 2 & 2 & 2 & 8 & 4 & 8 \\
K & 1 & 2 & 2 & 2 & 3 & 4 & 6 & 6 & 6 & 4 & 12 & 12 \\
L & 1 & 2 & 2 & 2 & 3 & 4 & 6 & 6 & 6 & 8 & 12 & 24 \\
\end{array}
$$

\noindent Upper triangular form:

$$
\begin{array}{c|cccccccccccc}
 & A & B & C & D & E & F & G & H & I & J & K & L \\
\hline
V_1 & 1 & 1 & 1 & 1 & 1 & 1 & 1 & 1 & 1 & 1 & 1 & 1 \\
V_2 & . & 1 & . & . & . & 1 & . & 1 & . & 1 & 1 & 1 \\ 
V_3 & . & . & 1 & . & . & 1 & 1 & . & . & 1 & 1 & 1 \\ 
V_4 & . & . & . & 1 & . & 1 & . & . & 1 & 1 & 1 & 1 \\ 
V_5 & . & . & . & . & 1 & . & 1 & 2 & 1 & . & 2 & 2 \\ 
V_6 & . & . & . & . & . & . & 1 & . & 1 & . & 2 & 2 \\ 
V_7 & . & . & . & . & . & . & . & . & . & 4 & . & 4 \\
V_8 & . & . & . & . & . & . & . & . & . & . & . & 8 \\
\end{array}
$$


\noindent Character table for image of $\beta$:

$$
\begin{array}{r|rrrrrrrrr}
\mathrm{class} & 1 & 2 & 3 & 4 & 4 & 4 & 6 & 12 & 12 \\
\mathrm{size} & 1 & 1 & 2 & 6 & 6 & 2 & 2 & 2 & 2 \\
\hline
V_{1} & 1 & 1 & 1 & 1 & 1 & 1 & 1 & 1 & 1 \\
V_{2} & 1 & 1 & 1 & -1 & -1 & 1 & 1 & 1 & 1 \\
V_{3} & 1 & 1 & 1 & -1 & 1 & -1 & 1 & -1 & -1 \\
V_{4} & 1 & 1 & 1 & 1 & -1 & -1 & 1 & -1 & -1 \\
V_{5} & 2 & 2 & -1 & 0 & 0 & 2 & -1 & -1 & -1 \\
V_{6} & 2 & 2 & -1 & 0 & 0 & -2 & -1 & 1 & 1 \\
V_{7} & 4 & -4 & 4 & 0 & 0 & 0 & -4 & 0 & 0 \\
V_{8} & 8 & -8 & -4 & 0 & 0 & 0 & 4 & 0 & 0 \\
\end{array}
$$

\medskip

\noindent Character table of irreps

over $\mathbb{C}$


$$
\begin{array}{c|ccccccccc}
\mathrm{class} & 1 & 2 & 3 & 4A & 4B & 4C & 6 & 12A & 12B \\
\hline
\rho_{1} & 1 & 1 & 1 & 1 & 1 & 1 & 1 & 1 & 1 \\
\rho_{2} & 1 & 1 & 1 & -1 & -1 & 1 & 1 & -1 & -1 \\
\rho_{3} & 1 & 1 & 1 & 1 & -1 & -1 & 1 & 1 & 1 \\
\rho_{4} & 1 & 1 & 1 & -1 & 1 & -1 & 1 & -1 & -1 \\
\rho_{5} & 2 & 2 & -1 & 2 & 0 & 0 & -1 & -1 & -1 \\
\rho_{6} & 2 & 2 & -1 & -2 & 0 & 0 & -1 & 1 & 1 \\
\rho_{7} & 2 & -2 & 2 & 0 & 0 & 0 & -2 & 0 & 0 \\
\rho_{8} & 2 & -2 & -1 & 0 & 0 & 0 & 1 & \sqrt{3} & -\sqrt{3} \\
\rho_{9} & 2 & -2 & -1 & 0 & 0 & 0 & 1 & -\sqrt{3} & \sqrt{3} \\
\end{array}
$$

over $\mathbb{R}$

$$
\begin{array}{c|ccccccccc}
\mathrm{class} & 1 & 2 & 3 & 4A & 4B & 4C & 6 & 12A & 12B \\
\hline
\rho_{1} & 1 & 1 & 1 & 1 & 1 & 1 & 1 & 1 & 1 \\
\rho_{2} & 1 & 1 & 1 & -1 & -1 & 1 & 1 & -1 & -1 \\
\rho_{3} & 1 & 1 & 1 & 1 & -1 & -1 & 1 & 1 & 1 \\
\rho_{4} & 1 & 1 & 1 & -1 & 1 & -1 & 1 & -1 & -1 \\
\rho_{5} & 2 & 2 & -1 & 2 & 0 & 0 & -1 & -1 & -1 \\
\rho_{6} & 2 & 2 & -1 & -2 & 0 & 0 & -1 & 1 & 1 \\
2\rho_{7} & 4 & -4 & 4 & 0 & 0 & 0 & -4 & 0 & 0 \\
2 \rho_{8} & 4 & -4 & -2 & 0 & 0 & 0 & 2 & 2\sqrt{3} & -2\sqrt{3} \\
2\rho_{9} & 4 & -4 & -2 & 0 & 0 & 0 & 2 & -2\sqrt{3} & 2\sqrt{3} \\
\end{array}
$$

\medskip

Hence the cokernel of $\beta$ is

$$
  \begin{array}{c|c}
  \begin{aligned}
    V_1 & = \rho_1
    \\
    V_2 & = \rho_3
    \\
    V_3 & = \rho_2
    \\
    V_4 & = \rho_4
    \\
    V_5 & = \rho_5
    \\
    V_6 & = \rho_6
    \\
    V_7 & = 2 \rho_7
    \\
    V_8 & = 2 \rho_8 + 2 \rho_9
  \end{aligned}
  &
  \mathrm{coker}\left(
    A(2 D_{12})
      \overset{\beta}{\to}
    R_k(2 D_{12})
  \right)
  \;=\;
  \left\{
  \begin{array}{cccc}
    \frac{\mathbb{Z}[ \rho_7, \rho_8, \rho_9 ] }{ \mathbb{Z}[ 2\rho_7, 2\rho_8 + 2 \rho_9 ] }
    &\vert& k = \mathbb{C}
    \\
    \frac{\mathbb{Z}[ 2\rho_8, 2\rho_9 ] }{ \mathbb{Z}[ 2\rho_8 + 2 \rho_9 ] }
    &\vert& k = \mathbb{R}
    \\
    0 &\vert& k = \mathbb{Q}
  \end{array}
  \right.
  \end{array}
$$


\subsubsection{Binary dihedral group: $ 2 D_{14} \simeq \mathrm{Dic}_7$ }
 \label{BinaryDihedraD14}

\noindent Group name: $2 D_{14}$

\noindent Group order: $\vert 2 D_{14}\vert = 28$

\medskip

\noindent Subgroups:

$$
\begin{array}{|c|c|c|c|c|}
\hline
\text{subgroup} & \text{order} & \text{cosets} & \text{conjugates} & \text{cyclic} \\
\hline
A & 28 & 1 & 1 &  \\
\hline
B & 14 & 2 & 1 & \checkmark \\
\hline
C & 7 & 4 & 1 & \checkmark \\
\hline
D & 4 & 7 & 7 & \checkmark \\
\hline
E & 2 & 14 & 1 & \checkmark \\
\hline
F & 1 & 28 & 1 & \checkmark \\
\hline
\end{array}
$$

\noindent Table of multiplicities:

$$
\begin{array}{c|cccccc}
 & A & B & C & D & E & F \\
\hline
A & 1 & 1 & 1 & 1 & 1 & 1 \\
B & 1 & 2 & 2 & 1 & 2 & 2 \\
C & 1 & 2 & 4 & 1 & 2 & 4 \\
D & 1 & 1 & 1 & 4 & 7 & 7 \\
E & 1 & 2 & 2 & 7 & 14 & 14 \\
F & 1 & 2 & 4 & 7 & 14 & 28 \\
\end{array}
$$

\noindent Upper triangular form:

$$
\begin{array}{c|cccccc}
 & A & B & C & D & E & F \\
\hline
V_1 & 1 & 1 & 1 & 1 & 1 & 1 \\
V_2 & . & 1 & 1 & . & 1 & 1 \\
V_3 & . & . & 2 & . & . & 2 \\
V_4 & . & . & . & 3 & 6 & 6 \\
V_5 & . & . & . & . & . & 12 \\
\end{array}
$$

\noindent Character table for image of $\beta$:

$$
\begin{array}{r|rrrrrrrrrr}
\mathrm{class} & 1 & 2 & 4 & 4 & 7 & 7 & 7 & 14 & 14 & 14 \\
\mathrm{size} & 1 & 1 & 7 & 7 & 2 & 2 & 2 & 2 & 2 & 2 \\
\hline
V_{1} & 1 & 1 & 1 & 1 & 1 & 1 & 1 & 1 & 1 & 1 \\
V_{2} & 1 & 1 & -1 & -1 & 1 & 1 & 1 & 1 & 1 & 1 \\
V_{3} & 2 & -2 & 0 & 0 & 2 & 2 & 2 & -2 & -2 & -2 \\
V_{4} & 6 & 6 & 0 & 0 & -1 & -1 & -1 & -1 & -1 & -1 \\
V_{5} & 12 & -12 & 0 & 0 & -2 & -2 & -2 & 2 & 2 & 2
\end{array}
$$


$$
\begin{array}{c|cccccccccc}
\mathrm{class} & 1 & 2 & 4A & 4B & 7A & 7B & 7C & 14A & 14B & 14C \\
\hline
\rho_{1} & 1 & 1 & 1 & 1 & 1 & 1 & 1 & 1 & 1 & 1 \\
\rho_{2} & 1 & 1 & -1 & -1 & 1 & 1 & 1 & 1 & 1 & 1 \\
\rho_{3} & 1 & -1 & -i & i & 1 & 1 & 1 & -1 & -1 & -1 \\
\rho_{4} & 1 & -1 & i & -i & 1 & 1 & 1 & -1 & -1 & -1 \\
\rho_{5} & 2 & 2 & 0 & 0 & \zeta_{7}^3+\zeta_{7}^4 & \zeta_{7}+\zeta_{7}^6 & \zeta_{7}^2+\zeta_{7}^5 & \zeta_{7}^3+\zeta_{7}^4 & \zeta_{7}+\zeta_{7}^6 & \zeta_{7}^2+\zeta_{7}^5 \\
\rho_{6} & 2 & 2 & 0 & 0 & \zeta_{7}^2+\zeta_{7}^5 & \zeta_{7}^3+\zeta_{7}^4 & \zeta_{7}+\zeta_{7}^6 & \zeta_{7}^2+\zeta_{7}^5 & \zeta_{7}^3+\zeta_{7}^4 & \zeta_{7}+\zeta_{7}^6 \\
\rho_{7} & 2 & 2 & 0 & 0 & \zeta_{7}+\zeta_{7}^6 & \zeta_{7}^2+\zeta_{7}^5 & \zeta_{7}^3+\zeta_{7}^4 & \zeta_{7}+\zeta_{7}^6 & \zeta_{7}^2+\zeta_{7}^5 & \zeta_{7}^3+\zeta_{7}^4 \\
\rho_{8} & 2 & -2 & 0 & 0 & \zeta_{7}^3+\zeta_{7}^4 & \zeta_{7}+\zeta_{7}^6 & \zeta_{7}^2+\zeta_{7}^5 & -\zeta_{7}^3-\zeta_{7}^4 & -\zeta_{7}-\zeta_{7}^6 & -\zeta_{7}^2-\zeta_{7}^5 \\
\rho_{9} & 2 & -2 & 0 & 0 & \zeta_{7}^2+\zeta_{7}^5 & \zeta_{7}^3+\zeta_{7}^4 & \zeta_{7}+\zeta_{7}^6 & -\zeta_{7}^2-\zeta_{7}^5 & -\zeta_{7}^3-\zeta_{7}^4 & -\zeta_{7}-\zeta_{7}^6 \\
\rho_{10} & 2 & -2 & 0 & 0 & \zeta_{7}+\zeta_{7}^6 & \zeta_{7}^2+\zeta_{7}^5 & \zeta_{7}^3+\zeta_{7}^4 & -\zeta_{7}-\zeta_{7}^6 & -\zeta_{7}^2-\zeta_{7}^5 & -\zeta_{7}^3-\zeta_{7}^4 \\
\end{array}
$$



\subsubsection{Binary dihedral group: $ 2 D_{16} \simeq \mathrm{Dic}_8 \simeq Q_{32} $}
 \label{BinaryDihedraQ32}

\noindent Group name: $2 D_{16}$

\noindent Group order: $\vert 2 D_{16}\vert = 32 $

\medskip

\noindent Subgroups:

$$
\begin{array}{|c|c|c|c|c|}
\hline
\text{subgroup} & \text{order} & \text{cosets} & \text{conjugates} & \text{cyclic} \\
\hline
A & 32 & 1 & 1 &  \\
\hline
B & 16 & 2 & 1 &  \\
\hline
C & 16 & 2 & 1 & \checkmark \\
\hline
D & 16 & 2 & 1 &  \\
\hline
E & 8 & 4 & 1 & \checkmark \\
\hline
F & 8 & 4 & 2 &  \\
\hline
G & 8 & 4 & 2 &  \\
\hline
H & 4 & 8 & 4 & \checkmark \\
\hline
I & 4 & 8 & 4 & \checkmark \\
\hline
J & 4 & 8 & 1 & \checkmark \\
\hline
K & 2 & 16 & 1 & \checkmark \\
\hline
L & 1 & 32 & 1 & \checkmark \\
\hline
\end{array}
$$

\noindent Table of multiplicities:

$$
\begin{array}{c|cccccccccccc}
 & A & B & C & D & E & F & G & H & I & J & K & L \\
\hline
A & 1 & 1 & 1 & 1 & 1 & 1 & 1 & 1 & 1 & 1 & 1 & 1 \\
B & 1 & 2 & 1 & 1 & 2 & 1 & 2 & 2 & 1 & 2 & 2 & 2 \\
C & 1 & 1 & 2 & 1 & 2 & 1 & 1 & 1 & 1 & 2 & 2 & 2 \\
D & 1 & 1 & 1 & 2 & 2 & 2 & 1 & 1 & 2 & 2 & 2 & 2 \\
E & 1 & 2 & 2 & 2 & 4 & 2 & 2 & 2 & 2 & 4 & 4 & 4 \\
F & 1 & 1 & 1 & 2 & 2 & 3 & 2 & 2 & 3 & 4 & 4 & 4 \\
G & 1 & 2 & 1 & 1 & 2 & 2 & 3 & 3 & 2 & 4 & 4 & 4 \\
H & 1 & 2 & 1 & 1 & 2 & 2 & 3 & 5 & 4 & 4 & 8 & 8 \\
I & 1 & 1 & 1 & 2 & 2 & 3 & 2 & 4 & 5 & 4 & 8 & 8 \\
J & 1 & 2 & 2 & 2 & 4 & 4 & 4 & 4 & 4 & 8 & 8 & 8 \\
K & 1 & 2 & 2 & 2 & 4 & 4 & 4 & 8 & 8 & 8 & 16 & 16 \\
L & 1 & 2 & 2 & 2 & 4 & 4 & 4 & 8 & 8 & 8 & 16 & 32 \\
\end{array}
$$

\noindent Upper triangular form:

$$
\begin{array}{c|cccccccccccc}
 & A & B & C & D & E & F & G & H & I & J & K & L \\
\hline
V_1 & 1 & 1 & 1 & 1 & 1 & 1 & 1 & 1 & 1 & 1 & 1 & 1 \\
V_2  & . & 1 & . & . & 1 & . & 1 & 1 & . & 1 & 1 & 1 \\ 
V_3  & . & . & 1 & . & 1 & . & . & . & . & 1 & 1 & 1 \\ 
V_4  & . & . & . & 1 & 1 & 1 & . & . & 1 & 1 & 1 & 1 \\ 
V_5 & . & . & . & . & . & 1 & 1 & 1 & 1 & 2 & 2 & 2 \\
V_6 & . & . & . & . & . & . & . & 2 & 2 & . & 4 & 4 \\
V_7 & . & . & . & . & . & . & . & . & . & . & . & 16 \\
\end{array}
$$


\noindent Character table for image of $\beta$:

$$
\begin{array}{r|rrrrrrrrrrr}
\mathrm{class} & 1 & 2 & 4 & 4 & 4 & 8 & 8 & 16 & 16 & 16 & 16 \\
\mathrm{size} & 1 & 1 & 8 & 8 & 2 & 2 & 2 & 2 & 2 & 2 & 2 \\
\hline
V_{1} & 1 & 1 & 1 & 1 & 1 & 1 & 1 & 1 & 1 & 1 & 1 \\
V_{2} & 1 & 1 & 1 & -1 & 1 & 1 & 1 & -1 & -1 & -1 & -1 \\
V_{3} & 1 & 1 & -1 & -1 & 1 & 1 & 1 & 1 & 1 & 1 & 1 \\
V_{4} & 1 & 1 & -1 & 1 & 1 & 1 & 1 & -1 & -1 & -1 & -1 \\
V_{5} & 2 & 2 & 0 & 0 & 2 & -2 & -2 & 0 & 0 & 0 & 0 \\
V_{6} & 4 & 4 & 0 & 0 & -4 & 0 & 0 & 0 & 0 & 0 & 0 \\
V_{7} & 16 & -16 & 0 & 0 & 0 & 0 & 0 & 0 & 0 & 0 & 0 \\
\end{array}
$$


$$
\begin{array}{c|ccccccccccc}
\mathrm{class} & 1 & 2 & 4A & 4B & 4C & 8A & 8B & 16A & 16B & 16C & 16D \\
\hline
\rho_{1} & 1 & 1 & 1 & 1 & 1 & 1 & 1 & 1 & 1 & 1 & 1 \\
\rho_{2} & 1 & 1 & 1 & 1 & -1 & 1 & 1 & -1 & -1 & -1 & -1 \\
\rho_{3} & 1 & 1 & 1 & -1 & 1 & 1 & 1 & -1 & -1 & -1 & -1 \\
\rho_{4} & 1 & 1 & 1 & -1 & -1 & 1 & 1 & 1 & 1 & 1 & 1 \\
\rho_{5} & 2 & 2 & 2 & 0 & 0 & -2 & -2 & 0 & 0 & 0 & 0 \\
\rho_{6} & 2 & 2 & -2 & 0 & 0 & 0 & 0 & -\sqrt{2} & -\sqrt{2} & \sqrt{2} & \sqrt{2} \\
\rho_{7} & 2 & 2 & -2 & 0 & 0 & 0 & 0 & \sqrt{2} & \sqrt{2} & -\sqrt{2} & -\sqrt{2} \\
\rho_{8} & 2 & -2 & 0 & 0 & 0 & -\sqrt{2} & \sqrt{2} & \zeta_{16}-\zeta_{16}^7 & -\zeta_{16}+\zeta_{16}^7 & -\zeta_{16}^3+\zeta_{16}^5 & \zeta_{16}^3-\zeta_{16}^5 \\
\rho_{9} & 2 & -2 & 0 & 0 & 0 & -\sqrt{2} & \sqrt{2} & -\zeta_{16}+\zeta_{16}^7 & \zeta_{16}-\zeta_{16}^7 & \zeta_{16}^3-\zeta_{16}^5 & -\zeta_{16}^3+\zeta_{16}^5 \\
\rho_{10} & 2 & -2 & 0 & 0 & 0 & \sqrt{2} & -\sqrt{2} & -\zeta_{16}^3+\zeta_{16}^5 & \zeta_{16}^3-\zeta_{16}^5 & -\zeta_{16}+\zeta_{16}^7 & \zeta_{16}-\zeta_{16}^7 \\
\rho_{11} & 2 & -2 & 0 & 0 & 0 & \sqrt{2} & -\sqrt{2} & \zeta_{16}^3-\zeta_{16}^5 & -\zeta_{16}^3+\zeta_{16}^5 & \zeta_{16}-\zeta_{16}^7 & -\zeta_{16}+\zeta_{16}^7 \\
\end{array}
$$


\subsection{Binary exceptional groups: $2T$, $2I$, $2O$}

We discuss the three exceptional cases in the E-series
of the finite subgroups of $\mathrm{SU}(2)$ (from Prop. \ref{ADEGroups}).

\subsubsection{Binary tetrahedral group: $2T = \mathrm{SL}(2, 3).$}
  \label{BinaryTetrahedral}

\noindent Group name: $2T$ (\cite{Dok2T})

\noindent Group order: $\vert 2T\vert = 24$

\medskip

\noindent Subgroups:

$$
\begin{array}{|c|c|c|c|c|}
\hline
\text{subgroup} & \text{order} & \text{cosets} & \text{conjugates} & \text{cyclic} \\
\hline
A & 24 & 1 & 1 &  \\
\hline
B & 8 & 3 & 1 &  \\
\hline
C & 6 & 4 & 4 & \checkmark \\
\hline
D & 4 & 6 & 3 & \checkmark \\
\hline
E & 3 & 8 & 4 & \checkmark \\
\hline
F & 2 & 12 & 1 & \checkmark \\
\hline
G & 1 & 24 & 1 & \checkmark \\
\hline
\end{array}
$$


\noindent Table of multiplicities:

$$
\begin{array}{c|ccccccc}
 & A & B & C & D & E & F & G \\
\hline
A & 1 & 1 & 1 & 1 & 1 & 1 & 1 \\
B & 1 & 3 & 1 & 3 & 1 & 3 & 3 \\
C & 1 & 1 & 2 & 2 & 2 & 4 & 4 \\
D & 1 & 3 & 2 & 4 & 2 & 6 & 6 \\
E & 1 & 1 & 2 & 2 & 4 & 4 & 8 \\
F & 1 & 3 & 4 & 6 & 4 & 12 & 12 \\
G & 1 & 3 & 4 & 6 & 8 & 12 & 24 \\
\end{array}
$$

\noindent Upper triangular form:

$$
\begin{array}{c|ccccccc}
 & A & B & C & D & E & F & G \\
\hline
V_1 & 1 & 1 & 1 & 1 & 1 & 1 & 1 \\
V_2 & . & 2 & . & 2 & . & 2 & 2 \\
V_3 & . & . & 1 & 1 & 1 & 3 & 3 \\
V_4 & . & . & . & . & 2 & . & 4 \\
V_5 & . & . & . & . & . & . & 4 \\
\end{array}
$$


\noindent Character table for image of $\beta$:

$$
\begin{array}{r|rrrrrrr}
\mathrm{class} & 1 & 2 & 3A & 3B & 4 & 6A & 6B \\
\mathrm{size} & 1 & 1 & 4 & 4 & 6 & 4 & 4 \\
\hline
          V_1 & 1 & 1 & 1 & 1 & 1 & 1 & 1 \\
          V_2 & 2 & 2 & -1 & -1 & 2 & -1 & -1 \\
          V_3 & 3 & 3 & 0 & 0 & -1 & 0 & 0 \\
          V_4 & 4 & -4 & 1 & 1 & 0 & -1 & -1 \\
          V_5 & 4 & -4 & -2 & -2 & 0 & 2 & 2 \\
\end{array}
$$

\medskip

\noindent Character table of irreps \cite{Dok2T, Mon2T}:

over $\mathbb{C}$:

\begin{center}
\begin{tabular}{cc|ccccccc}
 \multicolumn{2}{c}{  } &  \multicolumn{7}{c}{ conjugacy class }
 \\
 \multicolumn{2}{c|}{} & $1$ & 2 & 3A & 3B & 4 & 6A & 6B
 \\
 \cline{2-9}
 \multirow{7}{*}{ \raisebox{-50pt}{\begin{rotate}{90} irred. repr. \end{rotate}} \hspace{-20pt} }
 & $\rho_1$ & $\phantom{+}1$ & $\phantom{+}1$ & $\phantom{+}1$ & $\phantom{+}1$ & $\phantom{+}1$ & $\phantom{+}1$ & $\phantom{+}1$ \\
 & $\rho_2$ & $\phantom{+}1$ & $\phantom{+}1$ & $\phantom{+}\zeta_3^2$ & $\phantom{+}\zeta_3$ & $\phantom{+}1$ & $\phantom{+}\zeta_3^2$ &  $\phantom{+}\zeta_3$ \\
 & $\rho_2^\ast$ & $\phantom{+}1$ & $\phantom{+}1$ & $\phantom{+}\zeta_3$ & $\phantom{+}\zeta_3^2$ & $\phantom{+}1$ & $\phantom{+}\zeta_3$ &  $\phantom{+}\zeta_3^2$ \\
 & $\rho_3$ & $\phantom{+}3$ & $\phantom{+}3$ & $\phantom{+}0$ & $\phantom{+}0$ & $-1$ & $\phantom{+}0$ & $\phantom{+}0$ \\
 & $\rho_4$ & $\phantom{+}2$ & $-2$ & $-\zeta_3^2$ & $-\zeta_3$ & $\phantom{+}0$ & $\phantom{+}\zeta_3^2$ & $\phantom{+}\zeta_3$ \\
 & $\rho_4^\ast$ & $\phantom{+}2$ & $-2$ & $-\zeta_3$ & $-\zeta_3^2$ & $\phantom{+}0$ & $\phantom{+}\zeta_3$ & $\phantom{+}\zeta_3^2$ \\
 & $\rho_5$ & $\phantom{+}2$ & $-2$ & $-1$ & $-1$ & $\phantom{+}0$ & $\phantom{+}1$ & $\phantom{+}1$
\end{tabular}
\end{center}

over $\mathbb{R}$

\begin{center}
\begin{tabular}{cc|ccccccc}
 \multicolumn{2}{c}{  } &  \multicolumn{7}{c}{ conjugacy class }
 \\
 \multicolumn{2}{c|}{} & $1$ & 2 & 3A & 3B & 4 & 6A & 6B
 \\
 \cline{2-9}
 \multirow{5}{*}{ \raisebox{-50pt}{\begin{rotate}{90} irred. repr. \end{rotate}} \hspace{-20pt} }
 & $\rho_1$ & $\phantom{+}1$ & $\phantom{+}1$ & $\phantom{+}1$ & $\phantom{+}1$ & $\phantom{+}1$ & $\phantom{+}1$ & $\phantom{+}1$ \\
 & $\rho_2 + \rho_2^\ast$ & $\phantom{+}2$ & $\phantom{+}2$ & $-1$ & $-1$ & $\phantom{+}2$ & $-1$ &  $-1$ \\
 & $\rho_3$ & $\phantom{+}3$ & $\phantom{+}3$ & $\phantom{+}0$ & $\phantom{+}0$ & $-1$ & $\phantom{+}0$ & $\phantom{+}0$ \\
 & $\rho_4 + \rho_4^\ast$ & $\phantom{+}4$ & $-4$ & $\phantom{+}1$ & $\phantom{+}1$ & $\phantom{+}0$ & $-1$ & $-1$ \\
 & $2 \rho_5$ & $\phantom{+}4$ & $-4$ & $-2$ & $-2$ & $\phantom{+}0$ & $\phantom{+}2$ & $\phantom{+}2$
\end{tabular}
\end{center}

\noindent where $\zeta_3 := \tfrac{1}{2}\left(-1 + \sqrt{3} i \right)$

\medskip

\noindent Hence the cokernel of $\beta$ is:
$$
  \mathrm{coker}
  \left(
    A(2T) \overset{\beta}{\to} R_k(2T)
  \right)
  \;\simeq\;
  \left\{
  \begin{array}{ccc}
    \frac{ \mathbb{Z}[ \rho_2, \rho_2^\ast, \rho_4, \rho_4^\ast, \rho_8 ] }{ \mathbb{Z}[ \rho_2 + \rho_2^\ast, \rho_4 + \rho_4^\ast, 2 \rho_8 ] }
    &\vert& k = \mathbb{C}
    \\
    0 &\vert& k = \mathbb{R}
    \\
    0 &\vert& k = \mathbb{Q}
  \end{array}
  \right.
$$



\subsubsection{Binary octahedral group: $2O \simeq \mathrm{CSU}(2, \mathbb{F}_3).$}
 \label{BinaryOctahedralGroup}

\noindent Group name: $2O$ (\cite{Dok2O})

\noindent Group order: ${\vert 2 O\vert} = 48$

\medskip

\noindent Subgroups:

$$
\begin{array}{|c|c|c|c|c|}
\hline
\text{subgroup} & \text{order} & \text{cosets} & \text{conjugates} & \text{cyclic} \\
\hline
A & 48 & 1 & 1 &  \\
\hline
B & 24 & 2 & 1 &  \\
\hline
C & 16 & 3 & 3 &  \\
\hline
D & 12 & 4 & 4 &  \\
\hline
E & 8 & 6 & 3 &  \\
\hline
F & 8 & 6 & 1 &  \\
\hline
G & 8 & 6 & 3 & \checkmark \\
\hline
H & 6 & 8 & 4 & \checkmark \\
\hline
I & 4 & 12 & 6 & \checkmark \\
\hline
J & 4 & 12 & 3 & \checkmark \\
\hline
K & 3 & 16 & 4 & \checkmark \\
\hline
L & 2 & 24 & 1 & \checkmark \\
\hline
M & 1 & 48 & 1 & \checkmark \\
\hline
\end{array}
$$

\noindent Table of multiplicities:

$$
\begin{array}{c|ccccccccccccc}
 & A & B & C & D & E & F & G & H & I & J & K & L & M \\
\hline
A & 1 & 1 & 1 & 1 & 1 & 1 & 1 & 1 & 1 & 1 & 1 & 1 & 1 \\
B & 1 & 2 & 1 & 1 & 1 & 2 & 1 & 2 & 1 & 2 & 2 & 2 & 2 \\
C & 1 & 1 & 2 & 1 & 2 & 3 & 2 & 1 & 2 & 3 & 1 & 3 & 3 \\
D & 1 & 1 & 1 & 2 & 2 & 1 & 1 & 2 & 3 & 2 & 2 & 4 & 4 \\
E & 1 & 1 & 2 & 2 & 3 & 3 & 2 & 2 & 4 & 4 & 2 & 6 & 6 \\
F & 1 & 2 & 3 & 1 & 3 & 6 & 3 & 2 & 3 & 6 & 2 & 6 & 6 \\
G & 1 & 1 & 2 & 1 & 2 & 3 & 3 & 2 & 3 & 4 & 2 & 6 & 6 \\
H & 1 & 2 & 1 & 2 & 2 & 2 & 2 & 4 & 4 & 4 & 4 & 8 & 8 \\
I & 1 & 1 & 2 & 3 & 4 & 3 & 3 & 4 & 7 & 6 & 4 & 12 & 12 \\
J & 1 & 2 & 3 & 2 & 4 & 6 & 4 & 4 & 6 & 8 & 4 & 12 & 12 \\
K & 1 & 2 & 1 & 2 & 2 & 2 & 2 & 4 & 4 & 4 & 8 & 8 & 16 \\
L & 1 & 2 & 3 & 4 & 6 & 6 & 6 & 8 & 12 & 12 & 8 & 24 & 24 \\
M & 1 & 2 & 3 & 4 & 6 & 6 & 6 & 8 & 12 & 12 & 16 & 24 & 48 \\
\end{array}
$$

\noindent Upper triangular form:

$$
\begin{array}{c|ccccccccccccc}
 & A & B & C & D & E & F & G & H & I & J & K & L & M \\
\hline
V_1 & 1 & 1 & 1 & 1 & 1 & 1 & 1 & 1 & 1 & 1 & 1 & 1 & 1 \\
V_2 & . & 1 & . & . & . & 1 & . & 1 & . & 1 & 1 & 1 & 1 \\
V_3 & . & . & 1 & . & 1 & 2 & 1 & . & 1 & 2 & . & 2 & 2 \\
V_4 & . & . & . & 1 & 1 & . & . & 1 & 2 & 1 & 1 & 3 & 3 \\ 
V_5 & . & . & . & . & . & . & 1 & 1 & 1 & 1 & 1 & 3 & 3 \\ 
V_6 & . & . & . & . & . & . & . & . & . & . & 4 & . & 8 \\ 
V_7 & . & . & . & . & . & . & . & . & . & . & . & . & 8 \\ 
\end{array}
$$


\noindent Character table for image of $\beta$:

$$
\begin{array}{r|rrrrrrrr}
\mathrm{class} & 1 & 2 & 3 & 4A & 4B & 6 & 8A & 8B \\
\mathrm{size} & 1 & 1 & 8 & 6 & 12 & 8 & 6 & 6 \\
\hline
V_{1} & 1 & 1 & 1 & 1 & 1 & 1 & 1 & 1 \\
V_{2} & 1 & 1 & 1 & 1 & -1 & 1 & -1 & -1 \\
V_{3} & 2 & 2 & -1 & 2 & 0 & -1 & 0 & 0 \\
V_{4} & 3 & 3 & 0 & -1 & 1 & 0 & -1 & -1 \\
V_{5} & 3 & 3 & 0 & -1 & -1 & 0 & 1 & 1 \\
V_{6} & 8 & -8 & 2 & 0 & 0 & -2 & 0 & 0 \\
V_{7} & 8 & -8 & -4 & 0 & 0 & 4 & 0 & 0 \\
\end{array}
$$

\medskip

\noindent Character table of irreps \cite{Dok2O, Mon2O}

over $\mathbb{C}$

\begin{center}
\begin{tabular}{cc|cccccccc}
 \multicolumn{2}{c}{  } &  \multicolumn{8}{c}{ conjugacy class }
 \\
 \multicolumn{2}{c|}{} & 1 & 2 & 3 & 4A & 4B & 6 & 8A & 8B
 \\
 \cline{2-10}
 \multirow{8}{*}{ \raisebox{-50pt}{\begin{rotate}{90} irred. repr. \end{rotate}} \hspace{-20pt} }
 & $\rho_1$ & $\phantom{+}1$ & $\phantom{+}1$ & $\phantom{+}1$ & $\phantom{+}1$ & $\phantom{+}1$ & $\phantom{+}1$ & $\phantom{+}1$ & $\phantom{+}1$ \\
 & $\rho_2$ & $\phantom{+}1$ & $\phantom{+}1$ & $\phantom{+}1$& $\phantom{+}1$ & $-1$ & $\phantom{+}1$ & $-1$ & $-1$ \\
 & $\rho_3$ & $\phantom{+}2$ & $\phantom{+}2$ & $-1$ & $\phantom{+}2$ & $\phantom{+}0$ & $-1$ & $\phantom{+}0$ & $\phantom{+}0$ \\
 & $\rho_4$ & $\phantom{+}3$ & $\phantom{+}3$ & $\phantom{+}0$ & $-1$ & $-1$ & $\phantom{+}0$ & $\phantom{+}1$ & $\phantom{+}1$ \\
 & $\rho_5$ & $\phantom{+}3$ & $\phantom{+}3$ & $\phantom{+}0$ & $-1$ & $\phantom{+}1$ & $\phantom{+}0$ & $-1$ & $-1$ \\
 & $\rho_6$ & $\phantom{+}2$ & $-2$ & $-1$ & $\phantom{+}0$ & $\phantom{+}0$ & $\phantom{+}1$ & $\sqrt{2}$ & $-\sqrt{2}$ \\
 & $\rho_7$ & $\phantom{+}2$ & $-2$ & $-1$ & $\phantom{+}0$ & $\phantom{+}0$ & $\phantom{+}1$ & $-\sqrt{2}$ & $\sqrt{2}$ \\
 & $\rho_8$ & $\phantom{+}4$ & $-4$ & $\phantom{+}1$ & $\phantom{+}0$ & $\phantom{+}0$ & $-1$ & $\phantom{+}0$ & $\phantom{+}0$
\end{tabular}
\end{center}

over $\mathbb{R}$

\begin{center}
\begin{tabular}{cc|cccccccc}
 \multicolumn{2}{c}{  } &  \multicolumn{8}{c}{ conjugacy class }
 \\
 \multicolumn{2}{c|}{} & 1 & 2 & 3 & 4A & 4B & 6 & 8A & 8B
 \\
 \cline{2-10}
 \multirow{8}{*}{ \raisebox{-50pt}{\begin{rotate}{90} irred. repr. \end{rotate}} \hspace{-20pt} }
 & $\rho_1$ & $\phantom{+}1$ & $\phantom{+}1$ & $\phantom{+}1$ & $\phantom{+}1$ & $\phantom{+}1$ & $\phantom{+}1$ & $\phantom{+}1$ & $\phantom{+}1$ \\
 & $\rho_2$ & $\phantom{+}1$ & $\phantom{+}1$ & $\phantom{+}1$& $\phantom{+}1$ & $-1$ & $\phantom{+}1$ & $-1$ & $-1$ \\
 & $\rho_3$ & $\phantom{+}2$ & $\phantom{+}2$ & $-1$ & $\phantom{+}2$ & $\phantom{+}0$ & $-1$ & $\phantom{+}0$ & $\phantom{+}0$ \\
 & $\rho_4$ & $\phantom{+}3$ & $\phantom{+}3$ & $\phantom{+}0$ & $-1$ & $-1$ & $\phantom{+}0$ & $\phantom{+}1$ & $\phantom{+}1$ \\
 & $\rho_5$ & $\phantom{+}3$ & $\phantom{+}3$ & $\phantom{+}0$ & $-1$ & $\phantom{+}1$ & $\phantom{+}0$ & $-1$ & $-1$ \\
 & $2 \rho_6$ & $\phantom{+}4$ & $-4$ & $-2$ & $\phantom{+}0$ & $\phantom{+}0$ & $\phantom{+}2$ & $2\sqrt{2}$ & $-2\sqrt{2}$ \\
 & $2 \rho_7$ & $\phantom{+}4$ & $-4$ & $-2$ & $\phantom{+}0$ & $\phantom{+}0$ & $\phantom{+}2$ & $-2 \sqrt{2}$ & $2\sqrt{2}$ \\
 & $2 \rho_8$ & $\phantom{+}8$ & $-8$ & $\phantom{+}2$ & $\phantom{+}0$ & $\phantom{+}0$ & $-2$ & $\phantom{+}0$ & $\phantom{+}0$
\end{tabular}
\end{center}

\medskip

\noindent Hence the cokernel of $\beta$ is:

$$
  \mathrm{coker}
  \left(
    A(2O)
      \overset{\beta}{\to}
    R_k(2O)
  \right)
  \;\simeq\;
  \left\{
  \begin{array}{cccc}
    \frac{ \mathbb{Z}[\rho_6, \rho_7, \rho_8] }{ \mathbb{Z}[2\rho_6 + 2\rho_7, 2\rho_8] } &\vert& k= \mathbb{C};
    \\
    \frac{\mathbb{Z}[2\rho_6, 2\rho_7]}{ \mathbb{Z}[ 2\rho_6 + 2\rho_7 ] } &\vert& k = \mathbb{R};
    \\
    0 &\vert& k = \mathbb{Q}
  \end{array}
  \right.
$$


\subsubsection{Binary icosahedral group: $2I \simeq \mathrm{SL}(2, 5).$}
 \label{BinaryIcosahedralGroup}

\noindent Group name: $2I$ (\cite{Dok2I})

\noindent Group order: $\vert 2 I\vert = 120$

\medskip

\noindent Subgroups:

$$
\begin{array}{|c|c|c|c|c|}
\hline
\text{subgroup} & \text{order} & \text{cosets} & \text{conjugates} & \text{cyclic} \\
\hline
A & 120 & 1 & 1 &  \\
\hline
B & 24 & 5 & 5 &  \\
\hline
C & 20 & 6 & 6 &  \\
\hline
D & 12 & 10 & 10 &  \\
\hline
E & 10 & 12 & 6 & \checkmark \\
\hline
F & 8 & 15 & 5 &  \\
\hline
G & 6 & 20 & 10 & \checkmark \\
\hline
H & 5 & 24 & 6 & \checkmark \\
\hline
I & 4 & 30 & 15 & \checkmark \\
\hline
J & 3 & 40 & 10 & \checkmark \\
\hline
K & 2 & 60 & 1 & \checkmark \\
\hline
L & 1 & 120 & 1 & \checkmark \\
\hline
\end{array}
$$

\noindent Table of multiplicities:

$$
\begin{array}{c|cccccccccccc}
 & A & B & C & D & E & F & G & H & I & J & K & L \\
\hline
A & 1 & 1 & 1 & 1 & 1 & 1 & 1 & 1 & 1 & 1 & 1 & 1 \\
B & 1 & 2 & 1 & 2 & 1 & 2 & 3 & 1 & 3 & 3 & 5 & 5 \\
C & 1 & 1 & 2 & 2 & 2 & 3 & 2 & 2 & 4 & 2 & 6 & 6 \\
D & 1 & 2 & 2 & 3 & 2 & 4 & 4 & 2 & 6 & 4 & 10 & 10 \\
E & 1 & 1 & 2 & 2 & 4 & 3 & 4 & 4 & 6 & 4 & 12 & 12 \\
F & 1 & 2 & 3 & 4 & 3 & 6 & 5 & 3 & 9 & 5 & 15 & 15 \\
G & 1 & 3 & 2 & 4 & 4 & 5 & 8 & 4 & 10 & 8 & 20 & 20 \\
H & 1 & 1 & 2 & 2 & 4 & 3 & 4 & 8 & 6 & 8 & 12 & 24 \\
I & 1 & 3 & 4 & 6 & 6 & 9 & 10 & 6 & 16 & 10 & 30 & 30 \\
J & 1 & 3 & 2 & 4 & 4 & 5 & 8 & 8 & 10 & 16 & 20 & 40 \\
K & 1 & 5 & 6 & 10 & 12 & 15 & 20 & 12 & 30 & 20 & 60 & 60 \\
L & 1 & 5 & 6 & 10 & 12 & 15 & 20 & 24 & 30 & 40 & 60 & 120 \\
\end{array}
$$

\noindent Upper triangular form:

$$
\begin{array}{c|cccccccccccc}
 & A & B & C & D & E & F & G & H & I & J & K & L \\
\hline
V_1 & 1 & 1 & 1 & 1 & 1 & 1 & 1 & 1 & 1 & 1 & 1 & 1 \\
V_2 & . & 1 & . & 1 & . & 1 & 2 & . & 2 & 2 & 4 & 4 \\
V_3 & . & . & 1 & 1 & 1 & 2 & 1 & 1 & 3 & 1 & 5 & 5 \\
V_4 & . & . & . & . & 2 & . & 2 & 2 & 2 & 2 & 6 & 6 \\
V_5 & . & . & . & . & . & . & . & 4 & . & 4 & . & 12 \\
V_6 & . & . & . & . & . & . & . & . & . & 4 & . & 8 \\
V_7 & . & . & . & . & . & . & . & . & . & . & . & 8 \\
\end{array}
$$


\noindent Character table for image of $\beta$:

$$
\begin{array}{r|rrrrrrrrr}
\mathrm{class} & 1 & 2 & 3 & 4 & 5A & 5B & 6 & 10A & 10B \\
\mathrm{size} & 1 & 1 & 20 & 30 & 12 & 12 & 20 & 12 & 12 \\
\hline
V_1 & 1 & 1 & 1 & 1 & 1 & 1 & 1 & 1 & 1 \\
V_2 & 4 & 4 & 1 & 0 & -1 & -1 & 1 & -1 & -1 \\
V_3 & 5 & 5 & -1 & 1 & 0 & 0 & -1 & 0 & 0 \\
V_4 & 6 & 6 & 0 & -2 & 1 & 1 & 0 & 1 & 1 \\
V_5 & 12 & -12 & 0 & 0 & 2 & 2 & 0 & -2 & -2 \\
V_6 & 8 & -8 & 2 & 0 & -2 & -2 & -2 & 2 & 2 \\
V_7 & 8 & -8 & -4 & 0 & -2 & -2 & 4 & 2 & 2 \\
\end{array}
$$

\medskip

\noindent Character table of irreps \cite{Dok2I}

over $\mathbb{C}$

\begin{center}
\begin{tabular}{cc|ccccccccc}
 \multicolumn{2}{c}{  } &  \multicolumn{8}{c}{ conjugacy class }
 \\
 \multicolumn{2}{c|}{} & 1 & 2 & 3 & 4 & 5A & 5B & 6 & 10A & 10B
 \\
 \cline{2-11}
 \multirow{8}{*}{ \raisebox{-50pt}{\begin{rotate}{90} irred. repr. \end{rotate}} \hspace{-20pt} }
 & $\rho_1$ & $\phantom{+}1$ & $\phantom{+}1$ & $\phantom{+}1$ & $\phantom{+}1$ & $\phantom{+}1$ & $\phantom{+}1$ & $\phantom{+}1$ & $\phantom{+}1$ & $\phantom{+}1$
 \\
 & $\rho_2$ &  $\phantom{+}2$ &  $-2$ & $-1$ & $\phantom{+}0$ & $\phi-1$ & $-\phi$ & $\phantom{+}1$ & $\phantom{+}\phi$ & $1-\phi$
 \\
 & $\rho_3$ & $\phantom{+}2$ & $-2$ & $-1$ & $\phantom{+}0$ & $-\phi$ & $\phi-1$ & $\phantom{+}1$ & $1-\phi$ & $\phantom{+}\phi$
 \\
 & $\rho_4$ & $\phantom{+}3$ & $\phantom{+}3$ & $\phantom{+}0$ & $-1$ & $1-\phi$ & $\phantom{+}\phi$ & $\phantom{+}0$ & $\phantom{+}\phi$ & $1 - \phi$
 \\
 & $\rho_5$ & $\phantom{+}3$ & $\phantom{+}3$ & $\phantom{+}0$ & $-1$ & $\phantom{+}\phi$ & $1-\phi$ & $\phantom{+}0$ & $1-\phi$ & $\phantom{+}\phi$
 \\
 & $\rho_6$ & $\phantom{+}4$ & $\phantom{+}4$ & $\phantom{+}1$ & $\phantom{+}0$ & $-1$ & $-1$ & $1$ & $-1$ & $-1$
 \\
 & $\rho_7$ & $\phantom{+}4$ & $-4$ & $\phantom{+}1$ & $\phantom{+}0$ & $-1$ & $-1$ & $-1$ & $\phantom{+}1$& $\phantom{+}1$
 \\
 & $\rho_8$ & $\phantom{+}5$ & $-5$ & $-1$ & $\phantom{+}1$ & $\phantom{+}0$ & $\phantom{+}0$ & $-1$ & $\phantom{+}0$ & $\phantom{+}0$
 \\
 & $\rho_9$ & $\phantom{+}6$ & $-6$ & $\phantom{+}0$ & $\phantom{+}0$ & $\phantom{+}1$ & $\phantom{+}1$ & $\phantom{+}0$ & $-1$ & $-1$
\end{tabular}
\end{center}

over $\mathbb{R}$

\begin{center}
\begin{tabular}{cc|ccccccccc}
 \multicolumn{2}{c}{  } &  \multicolumn{8}{c}{ conjugacy class }
 \\
 \multicolumn{2}{c|}{} & 1 & 2 & 3 & 4 & 5A & 5B & 6 & 10A & 10B
 \\
 \cline{2-11}
 \multirow{8}{*}{ \raisebox{-50pt}{\begin{rotate}{90} irred. repr. \end{rotate}} \hspace{-20pt} }
 & $\rho_1$ & $\phantom{+}1$ & $\phantom{+}1$ & $\phantom{+}1$ & $\phantom{+}1$ & $\phantom{+}1$ & $\phantom{+}1$ & $\phantom{+}1$ & $\phantom{+}1$ & $\phantom{+}1$
 \\
 & $2\rho_2$ &  $\phantom{+}4$ &  $-4$ & $-2$ & $\phantom{+}0$ & $2(\phi-1)$ & $-2\phi$ & $\phantom{+}2$ & $2\phi$ & $2(1-\phi)$
 \\
 & $2\rho_3$ & $\phantom{+}4$ & $-4$ & $-2$ & $\phantom{+}0$ & $-2\phi$ & $2(\phi-1)$ & $\phantom{+}2$ & $2(1-\phi)$ & $2\phi$
 \\
 & $\rho_4$ & $\phantom{+}3$ & $\phantom{+}3$ & $\phantom{+}0$ & $-1$ & $1-\phi$ & $\phantom{+}\phi$ & $\phantom{+}0$ & $\phantom{+}\phi$ & $1 - \phi$
 \\
 & $\rho_5$ & $\phantom{+}3$ & $\phantom{+}3$ & $\phantom{+}0$ & $-1$ & $\phantom{+}\phi$ & $1-\phi$ & $\phantom{+}0$ & $1-\phi$ & $\phantom{+}\phi$
 \\
 & $\rho_6$ & $\phantom{+}4$ & $\phantom{+}4$ & $\phantom{+}1$ & $\phantom{+}0$ & $-1$ & $-1$ & $1$ & $-1$ & $-1$
 \\
 & $2\rho_7$ & $\phantom{+}8$ & $-8$ & $\phantom{+}2$ & $\phantom{+}0$ & $-2$ & $-2$ & $-2$ & $\phantom{+}2$& $\phantom{+}2$
 \\
 & $\rho_8$ & $\phantom{+}5$ & $5$ & $-1$ & $\phantom{+}1$ & $\phantom{+}0$ & $\phantom{+}0$ & $-1$ & $\phantom{+}0$ & $\phantom{+}0$
 \\
 & $2\rho_9$ & $\phantom{+}12$ & $-12$ & $\phantom{+}0$ & $\phantom{+}0$ & $\phantom{+}2$ & $\phantom{+}2$ & $\phantom{+}0$ & $-2$ & $-2$
\end{tabular}
\end{center}

\medskip where $\phi := \tfrac{1}{2}\left( 1 + \sqrt{5}\right)$ is the golden ratio.

\noindent Hence the cokernel of $\beta$ is

$$
  \begin{array}{c|c}
  \begin{aligned}
    V_1 & = \rho_1
    \\
    V_2 & = \rho_6
    \\
    V_3 & =\rho_8
    \\
    V_4 & = \rho_4 + \rho_5
    \\
    V_5 & = 2 \rho_9
    \\
    V_6 & = 2\rho_7
    \\
    V_7 & = 2 \rho_2 + 2 \rho_3
  \end{aligned}
  &
  \mathrm{coker}\left( \beta_k\right)
  \;=\;
  \left\{
  \begin{array}{ccc}
    \frac{ \mathbb{Z}[ \rho_2, \rho_3, \rho_4, \rho_5, \rho_7, \rho_9 ] }{ \mathbb{Z}[ 2\rho_2 + 2\rho_3, \rho_4 + \rho_5, 2\rho_7, 2\rho_9 ]  }
    &\vert& k =\mathbb{C}
    \\
    \\
    \frac{ \mathbb{Z}[ 2\rho_2, 2\rho_3, \rho_4, \rho_5   ] }{ \mathbb{Z}[ 2\rho_2 + 2\rho_3, \rho_4 + \rho_5  ]  }
    &\vert& k =\mathbb{R}
    \\
    \\
    0
    &\vert& k =\mathbb{Q}
  \end{array}
  \right.
  \end{array}
$$



\subsubsection{The general linear group: $\mathrm{GL}(2, \mathbb{F}_3)$}
  \label{TheGeneralLinear23}

\medskip

Group name: $\mathrm{GL}(2,\mathbb{F}_3)$ (\cite{DokGL23}\footnote{
The representation theory of this group $\mathrm{GL}(2,\mathbb{F}_3)$ is deceptively similar to that of
the binary octahedral group $2O \simeq \mathrm{CSU}(2,\mathbb{F}_3)$
(discussed Sect. \ref{BinaryOctahedralGroup}):
Both have the same character table over $\mathbb{C}$, the only difference being in the Schur indices, hence in the real
character table. In fact, several online databases of character tables had misidentified the two groups, which became apparent when our
computation of the image of $\beta$ revealed real representations of $2O$ that contradicted available character tables. We are indebted
to James Montaldi for patiently double-checking computations with us and to Tim Dokchitser for swiftly recognizing and fixing the issue with the databases.
})

\noindent Group order: $\vert \mathrm{GL}(2,\mathbb{F}_3)\vert = 48$

\medskip

\noindent Subgroups:

$$
\begin{array}{|c|c|c|c|c|}
\hline
\text{subgroup} & \text{order} & \text{cosets} & \text{conjugates} & \text{cyclic} \\
\hline
A & 48 & 1 & 1 &  \\
\hline
B & 24 & 2 & 1 &  \\
\hline
C & 16 & 3 & 3 &  \\
\hline
D & 12 & 4 & 4 &  \\
\hline
E & 8 & 6 & 3 & \checkmark \\
\hline
F & 8 & 6 & 1 &  \\
\hline
G & 8 & 6 & 3 &  \\
\hline
H & 6 & 8 & 4 &  \\
\hline
I & 6 & 8 & 4 &  \\
\hline
J & 6 & 8 & 4 & \checkmark \\
\hline
K & 4 & 12 & 6 &  \\
\hline
L & 4 & 12 & 3 & \checkmark \\
\hline
M & 3 & 16 & 4 & \checkmark \\
\hline
N & 2 & 24 & 1 & \checkmark \\
\hline
P & 2 & 24 & 12 & \checkmark \\
\hline
Q & 1 & 48 & 1 & \checkmark \\
\hline
\end{array}
$$


\noindent Table of multiplicities:

$$
\begin{array}{c|cccccccccccccccc}
 & A & B & C & D & E & F & G & H & I & J & K & L & M & N & P & Q \\
\hline
A & 1 & 1 & 1 & 1 & 1 & 1 & 1 & 1 & 1 & 1 & 1 & 1 & 1 & 1 & 1 & 1 \\
B & 1 & 2 & 1 & 1 & 1 & 2 & 1 & 2 & 1 & 1 & 1 & 2 & 2 & 1 & 2 & 2 \\
C & 1 & 1 & 2 & 1 & 2 & 3 & 2 & 1 & 1 & 1 & 2 & 3 & 1 & 2 & 3 & 3 \\
D & 1 & 1 & 1 & 2 & 1 & 1 & 2 & 2 & 2 & 2 & 3 & 2 & 2 & 3 & 4 & 4 \\
E & 1 & 1 & 2 & 1 & 3 & 3 & 2 & 2 & 1 & 1 & 3 & 4 & 2 & 3 & 6 & 6 \\
F & 1 & 2 & 3 & 1 & 3 & 6 & 3 & 2 & 1 & 1 & 3 & 6 & 2 & 3 & 6 & 6 \\
G & 1 & 1 & 2 & 2 & 2 & 3 & 3 & 2 & 2 & 2 & 4 & 4 & 2 & 4 & 6 & 6 \\
H & 1 & 2 & 1 & 2 & 2 & 2 & 2 & 4 & 2 & 2 & 4 & 4 & 4 & 4 & 8 & 8 \\
I & 1 & 1 & 1 & 2 & 1 & 1 & 2 & 2 & 3 & 3 & 3 & 2 & 4 & 5 & 4 & 8 \\
J & 1 & 1 & 1 & 2 & 1 & 1 & 2 & 2 & 3 & 3 & 3 & 2 & 4 & 5 & 4 & 8 \\
K & 1 & 1 & 2 & 3 & 3 & 3 & 4 & 4 & 3 & 3 & 7 & 6 & 4 & 7 & 12 & 12 \\
L & 1 & 2 & 3 & 2 & 4 & 6 & 4 & 4 & 2 & 2 & 6 & 8 & 4 & 6 & 12 & 12 \\
M & 1 & 2 & 1 & 2 & 2 & 2 & 2 & 4 & 4 & 4 & 4 & 4 & 8 & 8 & 8 & 16 \\
N & 1 & 1 & 2 & 3 & 3 & 3 & 4 & 4 & 5 & 5 & 7 & 6 & 8 & 13 & 12 & 24 \\
P & 1 & 2 & 3 & 4 & 6 & 6 & 6 & 8 & 4 & 4 & 12 & 12 & 8 & 12 & 24 & 24 \\
Q & 1 & 2 & 3 & 4 & 6 & 6 & 6 & 8 & 8 & 8 & 12 & 12 & 16 & 24 & 24 & 48 \\
\end{array}
$$

\noindent Upper triangular form:

$$
\begin{array}{c|cccccccccccccccc}
 & A & B & C & D & E & F & G & H & I & J & K & L & M & N & P & Q \\
\hline
V_1 & 1 & 1 & 1 & 1 & 1 & 1 & 1 & 1 & 1 & 1 & 1 & 1 & 1 & 1 & 1 & 1 \\
V_2 & . & 1 & . & . & . & 1 & . & 1 & . & . & . & 1 & 1 & . & 1 & 1 \\
V_3 & . & . & 1 & . & 1 & 2 & 1 & . & . & . & 1 & 2 & . & 1 & 2 & 2 \\
V_4  & . & . & . & 1 & . & . & 1 & 1 & 1 & 1 & 2 & 1 & 1 & 2 & 3 & 3 \\ 
V_5  & . & . & . & . & 1 & . & . & 1 & . & . & 1 & 1 & 1 & 1 & 3 & 3 \\ 
V_6  & . & . & . & . & . & . & . & . & 1 & 1 & . & . & 2 & 2 & . & 4 \\ 
V_7  & . & . & . & . & . & . & . & . & . & . & . & . & . & 2 & . & 4 \\ 
\end{array}
$$


\noindent Character table for image of $\beta$:

$$
\begin{array}{r|rrrrrrrr}
\mathrm{class} & 1 & 2A & 2B & 3 & 4 & 6 & 8A & 8B \\
\mathrm{size}  & 1 & 1  & 12 & 8 & 6 & 8 & 6 & 6 \\
\hline
           V_1 & 1 &  1 &  1 & 1 & 1 & 1 & 1 & 1 \\
           V_2 & 1 &  1 & -1 & 1 & 1 & 1 & -1 & -1 \\
           V_3 & 2 &  2 &  0 & -1 & 2 & -1 & 0 & 0 \\
           V_4 & 3 &  3 &  1 & 0 & -1 & 0 & -1 & -1 \\
           V_5 & 3 &  3 & -1 & 0 & -1 & 0 & 1 & 1 \\
           V_6 & 4 & -4 &  0 & 1 & 0 & -1 & 0 & 0 \\
           V_7 & 4 & -4 &  0 & -2 & 0 & 2 & 0 & 0 \\
\end{array}
$$

\medskip

\noindent Character table of irreps \cite{DokGL23}:

over $\mathbb{C}$

\begin{center}
\begin{tabular}{cc|cccccccc}
 \multicolumn{2}{c}{  } &  \multicolumn{8}{c}{ conjugacy class }
 \\
 \multicolumn{2}{c|}{} & 1 & 2A & 2B & 3 & 4 & 6 & 8A & 8B
 \\
 \cline{2-10}
 \multirow{8}{*}{ \raisebox{-50pt}{\begin{rotate}{90} irred. repr. \end{rotate}} \hspace{-20pt} }
 & $\rho_1$ & $\phantom{+}1$ & $\phantom{+}1$ & $\phantom{+}1$ & $\phantom{+}1$ & $\phantom{+}1$ & $\phantom{+}1$ & $\phantom{+}1$ & $\phantom{+}1$ \\
 & $\rho_2$ & $\phantom{+}1$ & $\phantom{+}1$ & $-1$ & $\phantom{+}1$& $\phantom{+}1$ & $\phantom{+}1$ & $-1$ & $-1$ \\
 & $\rho_3$ & $\phantom{+}2$ & $\phantom{+}2$ & $\phantom{+}0$ & $-1$ & $\phantom{+}2$ & $-1$ & $\phantom{+}0$ & $\phantom{+}0$ \\
 & $\rho_4$ & $\phantom{+}3$ & $\phantom{+}3$ & $-1$ & $\phantom{+}0$ & $-1$ & $\phantom{+}0$ & $\phantom{+}1$ & $\phantom{+}1$ \\
 & $\rho_5$ & $\phantom{+}3$ & $\phantom{+}3$ & $\phantom{+}1$ & $\phantom{+}0$ & $-1$ & $\phantom{+}0$ & $-1$ & $-1$ \\
 & $\rho_6$ & $\phantom{+}2$ & $-2$ & $\phantom{+}0$ & $-1$ & $\phantom{+}0$ & $\phantom{+}1$ & $-\sqrt{2}$ & $\sqrt{2}$ \\
 & $\rho_7$ & $\phantom{+}2$ & $-2$ & $\phantom{+}0$ & $-1$ & $\phantom{+}0$ & $\phantom{+}1$ & $\sqrt{2}$ & $-\sqrt{2}$ \\
 & $\rho_8$ & $\phantom{+}4$ & $-4$ & $\phantom{+}0$ & $\phantom{+}1$ & $\phantom{+}0$ & $-1$ & $\phantom{+}0$ & $\phantom{+}0$
\end{tabular}
\end{center}

over $\mathbb{R}$

\begin{center}
\begin{tabular}{cc|cccccccc}
 \multicolumn{2}{c}{  } &  \multicolumn{8}{c}{ conjugacy class }
 \\
 \multicolumn{2}{c|}{} & 1 & 2A & 2B & 3 & 4 & 6 & 8A & 8B
 \\
 \cline{2-10}
 \multirow{7}{*}{ \raisebox{-50pt}{\begin{rotate}{90} irred. repr. \end{rotate}} \hspace{-20pt} }
 & $\rho_1$ & $\phantom{+}1$ & $\phantom{+}1$ & $\phantom{+}1$ & $\phantom{+}1$ & $\phantom{+}1$ & $\phantom{+}1$ & $\phantom{+}1$ & $\phantom{+}1$ \\
 & $\rho_2$ & $\phantom{+}1$ & $\phantom{+}1$ & $-1$ & $\phantom{+}1$& $\phantom{+}1$ & $\phantom{+}1$ & $-1$ & $-1$ \\
 & $\rho_3$ & $\phantom{+}2$ & $\phantom{+}2$ & $\phantom{+}0$ & $-1$ & $\phantom{+}2$ & $-1$ & $\phantom{+}0$ & $\phantom{+}0$ \\
 & $\rho_4$ & $\phantom{+}3$ & $\phantom{+}3$ & $-1$ & $\phantom{+}0$ & $-1$ & $\phantom{+}0$ & $\phantom{+}1$ & $\phantom{+}1$ \\
 & $\rho_5$ & $\phantom{+}3$ & $\phantom{+}3$ & $\phantom{+}1$ & $\phantom{+}0$ & $-1$ & $\phantom{+}0$ & $-1$ & $-1$ \\
 & $\rho_6 + \rho_7$ & $\phantom{+}4$ & $-4$ & $\phantom{+}0$ & $-2$ & $\phantom{+}0$ & $\phantom{+}2$ & $\phantom{+}0$ & $\phantom{+}0$ \\
 & $\rho_8$ & $\phantom{+}4$ & $-4$ & $\phantom{+}0$ & $\phantom{+}1$ & $\phantom{+}0$ & $-1$ & $\phantom{+}0$ & $\phantom{+}0$
\end{tabular}
\end{center}

\medskip

\noindent Hence the cokernel of $\beta$ is
$$
  \mathrm{coker}
  \left(
    A\big(\mathrm{GL}(2,\mathbb{F}_3)\big)
    \overset{\beta_k}{\to}
    R_k\big(\mathrm{GL}(2,\mathbb{F}_3)\big)
  \right)
  \;\simeq\;
  \left\{
  \begin{array}{ccc}
     \frac{ \mathbb{Z}[ \rho_6, \rho_7 ] }{ \mathbb{Z}[\rho_6 + \rho_7] } &\vert& k = \mathbb{C}
     \\
     $0$ &\vert& k = \mathbb{R}
     \\
     $0$ &\vert& k =\mathbb{Q}
  \end{array}
  \right.
$$



\newpage

\appendix

\section{Background on platonic groups and categorical algebra}
\label{Background}

\vspace{-4pt} 
To be reasonably self-contained, we briefly collect some background material
which will be directly useful for our discussion and for connecting the physical
and the mathematical sides of the arguments.

\vspace{-8pt} 
\subsection{The Platonic groups}
 \label{ThePlatonicGroups}

We are interested in the ADE groups as the main groups appearing in orbifolds
in string theory. There are two variants, ones that are subgroups of the orthogonal
group $\mathrm{SO}(3)$ and ones that are subgroups of the unitary group
$\mathrm{SU}(2)$. Our focus will be on the latter.

\begin{prop}[ADE-classification of the finite rotation groups \cite{Klein1884}]
  \label{ADEGroups}
  The finite subgroups of $\mathrm{SU}(2)$
  are given, up to conjugacy, by the following classification (where $n \in \mathbb{N}$):
 {\small
  \begin{center}
  \begin{tabular}{|c||c|c||c|c||}
    \hline
    \begin{tabular}{c}
      \bf Dynkin
      \\
      \bf label
    \end{tabular}
    &
    \begin{tabular}{c}
      \bf Finite
      \\
      \bf subgroup
      \\
      \bf of $\mathrm{SO}(3)$
    \end{tabular}
    &
     \begin{tabular}{c}
       \bf Name of
      \\
      \bf group
    \end{tabular}
    &
    \begin{tabular}{c}
      \bf Finite
      \\
      \bf subgroup
      \\
      \bf of $\mathrm{SU}(2)$
    \end{tabular}
    &
     \begin{tabular}{c}
       \bf Name of
      \\
      \bf group
    \end{tabular}
    \\
    \hline
    \hline
    $\mathbb{A}_{\mathrlap{n \geq 1}}$
    &
    $C_{\mathrlap{n+1}}$ &   Cyclic
    &
    $\phantom{2}C_{\mathrlap{n+1}}$ &   Cyclic
    \\
    \hline
    $\mathbb{D}_{\mathrlap{n \geq 4}}$
    &
    $\mathrm{D}_{\mathrlap{2(n-2)}}$ & Dihedral
    &
    $2\mathrm{D}_{\mathrlap{2(n-2)}}$ & Binary dihedral
    \\
    \hline
    $\mathbb{E}_{\mathrlap{6}}$
    & $\mathrm{T}$ & Tetrahedral
    & $2\mathrm{T}$ & Binary tetrahedral
    \\
    \hline
    $\mathbb{E}_{\mathrlap{7}}$
    & $\mathrm{O}$ & Octahedral
    & $2\mathrm{O}$ & Binary octahedral
    \\
    \hline
    $\mathbb{E}_{\mathrlap{8}}$
    &
    $\mathrm{I}$ & Icosahedral
    &
    $2\mathrm{I}$ & Binary icosahedral
    \\
    \hline
  \end{tabular}
  \end{center}
  }
\end{prop}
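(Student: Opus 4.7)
The plan is to reduce the classification in $\mathrm{SU}(2)$ to the classical one in $\mathrm{SO}(3)$ via the universal covering homomorphism $\pi \colon \mathrm{SU}(2) \twoheadrightarrow \mathrm{SO}(3)$ with kernel $\{\pm I\}$. Any finite $G \subset \mathrm{SU}(2)$ has finite image $H := \pi(G) \subset \mathrm{SO}(3)$, with either $-I \in G$ (in which case $G = \pi^{-1}(H)$ is the binary lift of order $2|H|$) or $-I \notin G$ (in which case $\pi$ restricts to an isomorphism $G \xrightarrow{\sim} H$). Conversely, every finite $H \subset \mathrm{SO}(3)$ produces the binary lift $2H := \pi^{-1}(H) \subset \mathrm{SU}(2)$. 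So I would (a) classify finite subgroups of $\mathrm{SO}(3)$ up to conjugacy, (b) identify which such $H$ admit a further lift not containing $-I$, and (c) transfer the conjugacy statement back through $\pi$.

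\noindent\textbf{Step (a): Klein's pole-counting.} Each non-trivial $h \in H$ fixes exactly two antipodal points of $S^2$, so double-counting pairs $(h,p)$ with $h \neq e$ and $h\cdot p = p$ in two ways gives
\[
  \sum_{i} |H|\,(1 - 1/n_i) \;=\; 2(|H|-1),
\]
where the sum runs over $H$-orbits of poles and $n_i$ is the stabilizer order of a representative. Equivalently, $\sum_i (1 - 1/n_i) = 2 - 2/|H|$. Since each summand lies in $[1/2, 1)$ and the total is in $[1,2)$, there are either two or three orbits. The two-orbit case forces $n_1 = n_2 = |H|$ and $H$ cyclic. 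The three-orbit case, written $1/n_1 + 1/n_2 + 1/n_3 = 1 + 2/|H|$ with $n_1 \leq n_2 \leq n_3$, forces $n_1 = 2$ and then $(n_2,n_3) \in \{(2,m),\,(3,3),\,(3,4),\,(3,5)\}$, yielding dihedral, tetrahedral, octahedral, and icosahedral groups of orders $2m, 12, 24, 60$. Uniqueness up to conjugacy in $\mathrm{SO}(3)$ follows because the pole configuration, viewed as a weighted $H$-invariant subset of $S^2$, reconstructs (in the non-cyclic cases) the vertex/edge/face sets of a unique Platonic solid up to rotation; for cyclic and dihedral $H$ the rotation axes are likewise determined up to simultaneous rotation.

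\noindent\textbf{Step (b)--(c); main obstacle.} The key observation for step (b) is that any lift $A \in \mathrm{SU}(2)$ of an order-$2$ rotation satisfies $A^2 = -I$: indeed $A^2 \in \ker\pi = \{\pm I\}$ and $A^2 = I$ would force $A = \pm I$, contradicting that $A$ projects to a non-identity rotation. Hence any finite $G \subset \mathrm{SU}(2)$ containing a lift of an order-$2$ rotation automatically contains $-I$. Contrapositively, if $-I \notin G$ then $\pi(G) \simeq G$ has no element of order $2$, which by step (a) forces $\pi(G) = C_m$ with $m$ odd. Combining with the binary lifts, the complete list of finite subgroups of $\mathrm{SU}(2)$ up to conjugacy is $\{2D_{2(n-2)},\,2T,\,2O,\,2I\}$ together with all cyclic groups $C_{n+1}$ (the even-order ones arising as binary lifts of shorter cyclics, the odd-order ones as isomorphic non-central lifts), matching the table. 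For step (c), conjugacy lifts from $\mathrm{SO}(3)$ to $\mathrm{SU}(2)$: both preimages of a conjugating rotation work, because conjugation by either lift acts identically on $G$ (they differ by the central element $-I$). The main obstacle is the Diophantine enumeration in step (a), together with the non-trivial geometric input that abstract isomorphism between rotation groups of Platonic type actually entails conjugacy in $\mathrm{SO}(3)$, so that the cyclic groups are not double-counted between the two lifting regimes.
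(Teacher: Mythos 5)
Your argument is correct and is essentially the route the paper itself points to: it gives no proof of its own but defers to the classical literature (Klein's pole-counting classification of finite subgroups of $\mathrm{SO}(3)$ as in \cite{Rees05}, followed by the lift through the double cover $\mathrm{SU}(2)\twoheadrightarrow\mathrm{SO}(3)$ as in \cite{Keenan03}), which is exactly your Steps (a)--(c). The only places you are briefer than those references — the conjugacy-uniqueness of the pole configurations and the check that two odd-order cyclic subgroups over conjugate images are themselves conjugate (each being the unique index-$2$ subgroup of a cyclic preimage) — are standard and correctly flagged by you as the points needing care.
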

Full proof for finite subgroups of $\mathrm{SL}(2,\mathbb{C})$ is in \cite{MBD1916}, recalled in \cite[Sec. 2]{Serrano14}.
Full proof for $\mathrm{SO}(3)$ is spelled out in \cite[Theorem 11]{Rees05}; from this the case of $\mathrm{SU}(2)$
is given in \cite[Theorem 4]{Keenan03}.
See also \cite[Chapter 1,2]{Lindh2018} for an elementary treatment.

\medskip
Our discussion will require details on the structure of these finite
groups and how they relate to each other. Hence we highlight the following
pattern. The full subgroup lattice of ${\rm SU}(2)$ under the three exceptional finite subgroups from Prop. \ref{ADEGroups}
(using the subgroup lattices from \cite{Dok2T, Dok2O, Dok2I}).

\vspace{-.1cm}

\begin{center}
\scalebox{.55}{
$
  \xymatrix@C=4.5pt@R=9pt{
    &&&&
    &&&&&&&&
  \mathrm{SU}(2)
    \\
    \\
    \\
    \\
    &&&&
    2O
    \ar@{^{(}->}[uuuurrrrrrrr]
    &&&&&&&& &&&&&&&&
    2I
    \ar@{^{(}->}[uuuullllllll]
    \\
    &&&&
    &&&&&&&& &&&&&&&&
    \\
    \fbox{\large \bf E}
    &&&&
    &&&&&&&& &&&&&&&&
    \\
    &&&&
    &&&&&&&& &&&&&&&&
    \\
    &&&&
    &&&&
    &&&&
    2T
    \ar@{^{(}->}[uuuullllllll]
    \ar@{^{(}->}[uuuurrrrrrrr]
    \\
    &&&&
    &&&&&&&& &&&&&&&&
    \\
    &&&&
    &&&&&&&& &&&&&&&&
    \\
    &&&&
    &&&&
    2 D_8
    \ar@{^{(}->}[uuuuuuullll]
    &&&&
    &&&&&&&&
    \\
    &&&&
    2 D_6
    \ar@{^{(}->}[uuuuuuuu]
    &&&&
    &&&&
    &
    2 D_{10}
    \ar@{^{(}->}[uuuuuuuurrrrrrr]|<<<<{\phantom{AA \atop AA}}
    & &&
    {\phantom{2D_8}}
    &&&&
    2 D_6
    \ar@{^{(}->}[uuuuuuuu]
    \\
    \fbox{\large \bf D}
    &&&&
    \\
    &&&&
    &&&&&&
    2 D_4
    \ar@{^{(}->}[uuull]|>>>>>>>>>>>>>>>>{\phantom{AA \atop AA}}
    &&
    2D_4
    \ar@{^{(}->}[uuuuuu]
    \ar@{^{(}->}[uuullll]|>>>>>>>>>>>>>>{\phantom{AA}}
    \\
    &&&&
    \\
    &&&&
    &&&& &&&&
    &&
    C_{10}
    \ar@{^{(}->}[uuuul]
    &&&&
    \\
    &&&&
    &&&&
    C_8
    \ar@{^{(}->}[uuuuuu]|>>>>>>>>>>>>>>>>>>>>>>>>>>>>>{\phantom{{AA \atop AA} \atop A}}
    \\
    &&&&
    &&&&
    \\
    &&&&
    C_6
    \ar@{^{(}->}[uuuuuuu]
    \ar@{^{(}->}[uuuuuuuuuuurrrrrrrr]
    &&&&
    &&&&&&&&&&&&
    C_6
    \ar@{^{(}->}[uuuuuuu]
    \ar@{^{(}->}[uuuuuuuuuuullllllll]
    \\
    \fbox{\large \bf A}
    &&&&
    &&&& &&&& &&&& C_5
    \ar@{^{(}->}[uuuull]
    \\
    &&&&
    &&&&&
    C_4
    \ar@{^{(}->}[uuuuuuur]
    \ar@{^{(}->}[uuuuuuuuulllll]|>>>>>>>>>>>>>>>>>>>>>>>>>>>>>>>>>{ \phantom{AA \atop AA} }
    &&&
    C_4
    \ar@{^{(}->}[uuuuuuu]
    \ar@{^{(}->}[uuuuuuull]
    \ar@{^{(}->}[uuuuuuuuurrrrrrrr]|>>>>>>>>>>>>>>>>>>>>>>>>>>>>>>>>>>>>>>>>>>>>>>>>>>>>>>>>>>>>>>{\phantom{{AAA \atop AAA} \atop AA} }|>>>>>>>>>>>>>>>>>>>>>>>>>>>>>>>>>>>>>>>>>>>>>>>{\phantom{AA \atop AA}}|>>>>>>>>>>>>>>>>>>>>>>>>>>>>>{ \phantom{AA \atop AA } }
    \ar@{^{(}->}[uuuullll]|>>>>>>>>>>>>>>{\phantom{AA \atop AA}}
    \ar@{^{(}->}[uuuuuuuuur]
    \\
    &&&&
    \\
    &&&&
    C_3
    \ar@{^{(}->}[uuuu]
    &&&&&&&&&&&&&&&&
    C_3
    \ar@{^{(}->}[uuuu]
    \\
    &&&&
    \\
    &&&&
    &&&&&&&&
    C_2
    \ar@{^{(}->}[uuuu]
    \ar@{^{(}->}[uuuuuullllllll]
    \ar@{^{(}->}[uuuuuurrrrrrrr]
    \ar@{^{(}->}[uuuulll]
    \ar@{^{(}->}[uuuuuuuuurr]
    \\
    &&&&
    \\
    &&&&
    \\
    &&&&
    \\
    &&&&
    &&&&&&&&
    \ast
    \ar@{^{(}->}[uuuu]
    \ar@{^{(}->}[uuuuuullllllll]
    \ar@{^{(}->}[uuuuuurrrrrrrr]
    \ar@{^{(}->}[uuuuuuuuurrrr]|>>>>>>>>>>>>>>{\phantom{{AAAA \atop AAAA} \atop AAA}}
  }
$}
\end{center}

\subsection{Categorical algebra}
\label{CategoricalAlgebra}
For ease of reference, we briefly recall the concept
of internal homs in compact closed categories \cite[vol 2, 6.1]{Bor},
also \cite{Mc}.

\medskip

The categories $G \mathrm{Set}^{\mathrm{fin}}$ (Def. \ref{GSets}) and  $G \mathrm{Rep}^{\mathrm{fin}}_k$ (Def. \ref{GReps})
enjoy various properties that are directly analogous to
familiar properties of the category $\mathrm{Vect}^{\mathrm{fin}}_k$ of finite-dimensional $k$-vector spaces. The language of categorical algebra allows to
make these analogies explicit, and such that one may reason uniformly in all three cases.

\medskip
For $V,W \in \mathrm{Vect}^{\mathrm{fin}}_k$ two vector spaces, the set $\mathrm{Hom}(V,W)$ of linear maps (``homomorphisms'') $V \to W$ between them
becomes itself canonically a vector space, by pointwise multiplication with $k$ and pointwise addition of values of functions.
When we want to emphasize that we regard the set $\mathrm{Hom}(V,W)$ as equipped with this vector space structure, we write $[V,W]$ for it.

\medskip
One way to make this vector space of linear functions $[V,W]$ more explicit is to consider the \emph{dual} vector space $V^\ast$.
With that in hand we have a canonical linear isomorphism
$$
  \xymatrix@R=-10pt{
    W \otimes V^\ast  \ar[rr]^{\simeq} && [V,W]
    \\
    ( \vert w\rangle \otimes \langle v \vert  )
    \ar[rr] &&
    \big( \vert q \rangle \mapsto \vert w\rangle \cdot \underset{\in k}{\underbrace{\langle v , q \rangle}}  \,\big)
  }
$$

\vspace{-4mm}
\noindent which identifies the tensor product space of $V^\ast$ with $W$ as the vector space of linear maps
from $V$ to $W$. Here
$$
  \langle -,- \rangle
  \;\colon\;
  V^\ast \otimes V
  \longrightarrow
  k
  = \mathbf{1}
$$
denotes the pairing map that defines the dual vector space, and we denote elements of $V$ by $\vert q \rangle \in V$
and those of the dual vector space by $\langle v \vert \in V^\ast $, just so as to bring out the pattern better.
Note that this pairing map is itself $k$-linear.
Hence if we regard the ground field $k$ as the canonical 1-dimensional $k$-vector $\mathbf{1}$, as indicated, then this is
actually a morphism in $\mathrm{Vect}_k^{\mathrm{fin}}$.
There is also a closely related linear going the other way around:
$$
  \xymatrix@R=-2pt{
    \mathbf{1}
    \ar[rr]^-{ \eta }
    &&
    [V,V]
    \ar@{}[r]|\simeq
    &
    V \otimes V^\ast
    \\
    1 \ar@{|->}[rr] && \mathrm{id}_V
  }
$$
which, under the above identification, sends any element $c \in k$ to the linear map from $V$ to itself that is given by multiplication with $c$.
One readily checks that these two functions make the following triangles commute
$$
  \xymatrix{
    & V \otimes V^\ast \otimes V
    \ar[dr]^-{ \mathrm{id} \otimes \langle -,-\rangle  }
    \\
    V
    \ar[ur]^-{ \eta \otimes \mathrm{id} }
    \ar[rr]^{\mathrm{id}}
    &&
    V
  }
  \phantom{AAAAA}
  \xymatrix{
    & V^\ast \otimes V \otimes V^\ast
    \ar[dr]^-{ \langle -,-\rangle \otimes \mathrm{id} }
    \\
    V^\ast
    \ar[ur]^-{ \mathrm{id} \otimes \eta }
    \ar[rr]^{ \mathrm{id} }
    &&
    V^\ast
  }
$$
whence called the \emph{triangle identities}.

\medskip
The quickest way to convince oneself that this indeed holds is to choose linear identifications
$V \simeq \mathbb{R}^n$ and $W \simeq \mathbb{R}^m$, which means to choose \emph{linear bases}. This in turn induces a canonical identification
$V^\ast \simeq (\mathbb{R}^n)^\ast \simeq \mathbb{R}^n$ (the \emph{dual linear basis}), hence a linear identification
$$
  [V,W]
  \simeq
  V^\ast \otimes W \;\simeq\; \mathbb{R}^n \otimes \mathbb{R}^m \simeq \mathbb{R}^{n \times m}
  \simeq
  \mathrm{Mat}_{n \times m}(k)
$$
of the vector space of linear maps $V \to W$ with the vector space of $n \times m$ matrices.

\medskip
The description of dual vector spaces in terms of pairing and co-pairing maps satisfying triangle identities, as above,
turns out to be \emph{equivalent} to the traditional definition. It may seem more involved than the direct definition,
but it has the great advantage that it makes sense without any actual reference to the nature of vector spaces:
all that is needed to speak of \emph{dual objects} is the analogue of the tensor product $\otimes$.

\medskip
Categorical algebra shows that the \emph{triangle identities} guarantee
that $V^\ast \otimes W$ behaves like an ``internalized'' version of the Hom-set. The same applies to the tensor product of representations
used in  \cref{TheAlgorithm}.

\medskip
\medskip
\noindent {\bf Acknowledgements.}
We thank Tim Dokchitser, James Dolan, James Montaldi and Todd Trimble for discussion.
Our algorithm is inspired by the note \cite{Trimble09}, which in turn
goes back to private communication with James Dolan.

\end{document}